\documentclass[11pt]{amsart}
\usepackage[utf8]{inputenc}

\usepackage{amsmath,amsthm}
\usepackage{amssymb}
\usepackage{epsfig,tikz}
\usetikzlibrary{cd}
\usepackage{graphicx}
\usepackage{asymptote}
\usepackage{xy}
\usepackage{xypic}

\usepackage[tmargin=1in,bmargin=1in,lmargin=1in,rmargin=1in]{geometry}

\usepackage{todonotes}

\newcommand{\Mbar}{\overline{M}}
\newcommand{\bdpt}{\beta}

\newcommand{\bA}{\mathbb{A}}

\newcommand{\bP}{\mathbb{P}}
\newcommand{\PP}{\mathbb{P}}

\newcommand{\beq}{\begin{equation}}
\newcommand{\eeq}{\end{equation}}

\newcommand{\PF}{\mathrm{PF}}
\newcommand{\CPF}{\mathrm{CPF}}

\newcommand{\injto}{\hookrightarrow}

\newcommand{\defn}{\textbf}

\newcommand{\emb}{\Phi}
\newcommand{\kapn}{|\psi_n|}

\newcommand{\Tour}{\mathrm{Tour}}

\newcommand{\forget}{\pi_{\mathrm{lazy}}}

\newtheorem{thm}{Theorem}
\newtheorem{lemma}[thm]{Lemma}
\newtheorem{prop}[thm]{Proposition}
\newtheorem{corollary}[thm]{Corollary}

\numberwithin{thm}{section}
\numberwithin{equation}{section}
\numberwithin{figure}{section}

\theoremstyle{definition}

\newtheorem{example}[thm]{Example}
\newtheorem{definition}[thm]{Definition}

\newtheorem{remark}[thm]{Remark}

\newtheorem*{HyperplanesThm}{Theorem \ref{thm:simple-hyperplanes}}
\newtheorem*{TournamentsThm}{Theorem \ref{thm:tournaments}}
\newtheorem*{TotalCor}{Corollary \ref{cor:total-degree}}

\usepackage{mathtools}
\usepackage{enumitem}

\def\multichoose#1#2{\left<\genfrac{}{}{0pt}{}{#1}{#2}\right>}

\title{Lazy tournaments and multidegrees of a projective embedding of $\Mbar_{0,n}$}
\author{Maria Gillespie}
\thanks{Maria Gillespie was partially supported by NSF DMS award number 2054391.}
\address{Department of Mathematics, Colorado State University, Fort Collins, CO, USA} \email{maria.gillespie@colostate.edu} 

\author{Sean T. Griffin}
\thanks{Sean T. Griffin was partially supported by NSF Grant DMS-1439786 while in residence at the Institute for Computational and Experimental Research in Mathematics in Providence, RI, during the Spring 2021 semester.}
\address{Department of Mathematics, University of California Davis, Davis, CA, USA}
\email{stgriffin@ucdavis.edu}

\author{Jake Levinson}
\thanks{Jake Levinson was partially supported by an AMS Simons Travel Grant and by NSERC Discovery Grant RGPIN-2021-04169.}
\address{Department of Mathematics, Simon Fraser University, Burnaby, BC, Canada}
\email{jake\_levinson@sfu.ca}

\date{\today}

\begin{document}

\maketitle{}

\begin{abstract}
     We provide a new geometric interpretation of the multidegrees of the (iterated) Kapranov embedding $\emb_n:\Mbar_{0,n+3}\injto \PP^1\times \PP^2\times \cdots \times \PP^n$, where $\Mbar_{0,n+3}$ is the moduli space of stable genus $0$ curves with $n+3$ marked points. We enumerate the multidegrees by disjoint sets of boundary points of $\Mbar_{0,n+3}$ via a combinatorial algorithm on trivalent trees that we call a \textit{lazy tournament}.  These sets are compatible with the forgetting maps used to derive the recursion for the multidegrees proven in 2020 by Gillespie, Cavalieri, and Monin.
 
 The lazy tournament points are easily seen to total $(2n-1)!!=(2n-1)\cdot (2n-3) \cdots 5 \cdot 3 \cdot 1$, giving a natural proof of the fact that the total degree of $\emb_n$ is the odd double factorial. This fact was first proven using an insertion algorithm on certain parking functions, and we additionally give a bijection to those parking functions.
\end{abstract}

\section{Introduction}

In this paper, we give a new interpretation of the \emph{multidegrees} of the Deligne--Mumford moduli space $\Mbar_{0,n+3}$ \cite{deligne-mumford1969} of genus-$0$ stable curves with $n$ marked points, under the projective embedding
\[\Phi_n : \Mbar_{0, n+3} \hookrightarrow \mathbb{P}^1 \times \cdots \times \mathbb{P}^n\]
called the \emph{iterated Kapranov map}.
This map, first studied by Keel and Tevelev \cite{KeT}, is one of the simplest ways to realize $\Mbar_{0, n+3}$ as a projective variety. The divisor classes associated to it are the \emph{omega classes}, modifications of the better-known \emph{psi classes}. The multidegrees count the intersection points of $\Mbar_{0,n+3}$ with specified numbers of general hyperplanes pulled back from each $\mathbb{P}^i$ factor, i.e., they correspond to intersection products of omega classes in the cohomology ring of $\Mbar_{0,n+3}$.

Prior work \cite{CGM} showed that the multidegrees can be enumerated by certain parking functions. It was
also shown that the \emph{total degree} of $\emb_n$ is the odd double factorial $(2n-1)!!$.  The proof relied on developing an intricate insertion algorithm on column-restricted parking functions, without a clear connection to geometry.  However, the quantity $(2n-1)!!$ suggests such a connection, because it is also the total number of trivalent trees on $n+2$ (not $n+3$) leaves, i.e., the total number of boundary points on $\Mbar_{0, n+2}$.

We bridge this gap in this paper by associating to each multidegree a set of boundary points (labeled trivalent trees) on $\Mbar_{0,n+3}$. To do this, we develop an algorithm we call a \emph{lazy tournament} that produces these boundary points; see Definition \ref{def:lazy-tournament}. We show that these points are compatible with the forgetting and relabeling maps used in the (asymmetric) \emph{string equation} \cite{CGM} governing the multidegrees; see Proposition \ref{prop:recursion} and Remark \ref{rmk:forget-and-forgive}. The lazy tournament points then give an immediate count of the total degree of $\emb_n$: they partition the complete set of boundary points (trees) on a stratum of $\Mbar_{0,n+3}$ isomorphic to $\Mbar_{0,n+2}$. As such, they visibly total $(2n-1)!!$; see Theorem \ref{thm:tournaments} and Corollary \ref{cor:total-degree}.  Finally, we give a direct bijection between the column-restricted parking functions of \cite{CGM} and the lazy tournament points.

These results add to a growing body of literature relating algebraic combinatorics and the geometry of moduli spaces of curves. The basic connection to trees via the boundary stratification (see Section \ref{sec:background}) is long established. Enumerative questions have been of particular interest recently, including examining (as in this paper) many intersection products and structure constants on $\Mbar_{g,n}$ \cite{canning2021chow, silversmith2021crossratio}, tautological relations \cite{CladerJanda, pandharipande2020relations, PixtonThesis}, and Schubert calculus involving limit linear series \cite{chan-pflueger, eisenbud-harris-limit-linear}. Other topics of interest include the $S_n$ action on $H^*(\Mbar_{0,n})$ over $\mathbb{C}$ \cite{BergstromMinabe, Getzler, RaSil2020} and $\mathbb{R}$ \cite{Rains}, Chern classes of vector bundles on $\Mbar_{0,n}$ associated to $\mathfrak{sl}_r$ \cite{damiolini2020vertex, gibneykeelmorrison2002}, explicit projective equations for $\Mbar_{0,n}$ \cite{MonRan}, and similar questions pertaining to a number of closely-related moduli spaces \cite{clader2021permutohedral, clader2020boundary, fry2019tropical, larson2020global, sharma2019intersections}.

\subsection{Lazy tournaments and multidegrees}

The \defn{multidegrees} of the embedding $\emb_n$ are defined as 
\begin{equation}\label{eq:multidegree}
    \deg_{(k_1,\ldots,k_n)}(\emb_n)=\int_{\PP^1\times \cdots \times \PP^n}[\emb_n(\Mbar_{0,n+3})]\prod H_i^{k_i},
\end{equation} where $H_i$ is the class in the Chow ring of $\PP^1\times \cdots \times \PP^n$ pulled back from a hyperplane in $\PP^i$.  That is, the multidegree $\deg_{(k_1,\ldots,k_n)}(\emb_n)$  is the expected number of points of the intersection of $\emb_n(\Mbar_{0,n+3})$ with $n$ generic hyperplanes, $k_i$ of which are from $\PP^i$ for all $i$.  The \defn{total degree} of the embedding $\emb_n$ is the sum of all the multidegrees, defined as  $$\deg(\emb_n)=\sum_{k_1+k_2+\cdots+k_n=n} \deg_{(k_1,\ldots,k_n)}(\emb_n).$$ Note that the total degree is the (ordinary) degree of the projectivization of the affine cone over $\emb_n(\Mbar_{0,n+3})$ formed by lifting to the affine space $\bA^2\times \bA^3 \times \cdots \times \bA^{n+1}=\bA^{n(n+3)/2}$ lying above $\PP^1\times \cdots \times \PP^n$ (see Van der Waarden \cite{van}).

In \cite{CGM}, Cavalieri, the first author, and Monin showed the following.
\begin{thm}[\cite{CGM}]\label{thm:intro-odd-double-factorial}
 The total degree of $\emb_n$ is \begin{equation}\label{eq:total-degree}\deg(\emb_n)=(2n-1)!!=(2n-1)\cdot (2n-3)\cdot \cdots \cdot 5\cdot 3 \cdot 1.\end{equation}  Moreover, letting $\CPF(k_1,\ldots,k_n)$ be the number of \textbf{column-restricted parking functions} (see Section \ref{sec:CPFs}) having column heights $k_1,\ldots,k_n$, we have $$\deg_{(k_1,\ldots,k_n)}(\emb_n)=|\CPF(k_1,\ldots,k_n)|,$$ and the total number of column restricted parking functions of height $n$ is $(2n-1)!!$.
\end{thm}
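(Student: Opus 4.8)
The plan is to prove the three assertions of Theorem~\ref{thm:intro-odd-double-factorial} together, using the multidegree recursion as the backbone. The key input is the \emph{asymmetric string equation} of \cite{CGM}: forgetting the last marked point induces a forgetting map $\Mbar_{0,n+3}\to\Mbar_{0,n+2}$ under which the omega classes pull back in a controlled way, yielding a recursion
\[
    \deg_{(k_1,\ldots,k_n)}(\emb_n) \;=\; \sum \deg_{(k_1',\ldots,k_{n-1}')}(\emb_{n-1}),
\]
where the sum ranges over the multidegree vectors $(k_1',\ldots,k_{n-1}')$ obtained from $(k_1,\ldots,k_n)$ by an explicit ``lowering'' operation dictated by how a general hyperplane from $\PP^n$ meets the fibers of the forgetting map. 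The first step is to record this recursion precisely and check the base case $n=1$: $\Mbar_{0,4}\cong\PP^1$ embeds as the identity, so $\deg_{(1)}(\emb_1)=1=(2\cdot 1-1)!!$.

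\medskip

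Next I would set up a parallel recursion for column-restricted parking functions. A column-restricted parking function of height $n$ is (roughly) a parking function whose $i$-th column is capped by $i$; deleting the largest entry---or more precisely performing the inverse of an insertion step---produces a column-restricted parking function of height $n-1$, and one checks that the column-height statistic transforms under this deletion exactly as the multidegree vector does under the forgetting map. Thus $|\CPF(k_1,\ldots,k_n)|$ satisfies the same recursion with the same base case, so by induction on $n$,
\[
    \deg_{(k_1,\ldots,k_n)}(\emb_n) \;=\; |\CPF(k_1,\ldots,k_n)|.
\]
Summing over all $(k_1,\ldots,k_n)$ with $\sum k_i = n$ gives $\deg(\emb_n) = |\CPF(\text{height }n)|$. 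It then remains to count the column-restricted parking functions: I would show $|\CPF(\text{height }n)| = (2n-1)\cdot|\CPF(\text{height }n-1)|$, either by a direct combinatorial insertion argument (each height-$(n-1)$ object extends in exactly $2n-1$ ways to a height-$n$ object) or by summing the recursion for the $|\CPF(k_1,\ldots,k_n)|$ over all column-height vectors and simplifying; induction then yields $(2n-1)!!$.

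\medskip

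\textbf{The main obstacle} is establishing the multidegree recursion from the string equation, i.e., correctly identifying the ``lowering'' rule on the multidegree vectors. The subtlety is that the forgetting map $\Mbar_{0,n+3}\to\Mbar_{0,n+2}$ is not simply a $\PP^1$-bundle, and the omega class $\omega_n$ on the source does not pull back from the target; rather $\omega_n$ is supported on the fiber together with certain boundary corrections, so intersecting with $H_n^{k_n}$ requires a careful pushforward computation (this is precisely the content of the string equation in \cite{CGM}). One must track how excess intersection with the exceptional boundary divisors redistributes the hyperplane classes among the remaining factors $\PP^1,\ldots,\PP^{n-1}$. Once this rule is pinned down, matching it to the insertion/deletion operation on parking functions is a bookkeeping exercise, and the odd double factorial count follows formally. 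An alternative route that sidesteps part of this difficulty---and the one the present paper ultimately develops---is to interpret both sides via the lazy tournament boundary points, whose compatibility with the forgetting maps (Proposition~\ref{prop:recursion}) makes the recursion transparent and whose total count is manifestly $(2n-1)!!$; but in the framework of \cite{CGM} the hyperplane-pushforward computation is the crux.
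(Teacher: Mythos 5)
Your outline is essentially a reconstruction of the original proof in \cite{CGM}, which is indeed how the cited theorem was established: derive the asymmetric string equation by a pushforward computation for the forgetting map, show that column-restricted parking functions satisfy the same recursion via a deletion map, match base cases, and then obtain $(2n-1)!!$ from an insertion count. That is a legitimate route, but it is not the route of the present paper, which never re-does the hyperplane-pushforward/insertion argument; instead it takes the recursion of Proposition~\ref{prop:recursion} as input and re-derives the content of the theorem through lazy tournaments. Concretely, Theorem~\ref{thm:tournaments} proves $\deg_{\mathbf{k}}(\emb_n)=|\Tour(\mathbf{k})|$ by exhibiting the bijection $\forget$ on trees satisfying the same recursion; Corollary~\ref{cor:total-degree} then gives the total degree $(2n-1)!!$ with no insertion algorithm at all, because the sets $\Tour(\mathbf{k})$ partition the boundary points of the divisor $\delta_{a,b}\cong\Mbar_{0,n+2}$, i.e.\ trivalent trees on $n+2$ leaves; and the parking-function statement is recovered by the explicit bijection $\tau:\Tour(\mathbf{k})\to\CPF(\mathbf{k})$ of Proposition~\ref{prop:bij-tour-cpf}, which commutes with both recursions. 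What the paper's approach buys is exactly the step you flag as hardest on the combinatorial side: your claim that every height-$(n-1)$ object extends in exactly $2n-1$ ways is the intricate insertion lemma of \cite{CGM}, which your plan defers rather than proves, whereas in the tree picture it is the transparent fact that a trivalent tree has $2n-1$ edges/leaves at which to attach a new leaf. Two small inaccuracies to correct if you pursue your route: column-restrictedness is defined by the dominance condition $x>d_x$ (Definition~\ref{def:column-restricted}), not by ``column $i$ capped by $i$''; and the deletion map $r$ removes the row containing the label $1$ (then decrements and deletes the rightmost empty column), not the largest entry.
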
 
The first main result of this paper is an alternative interpretation of the multidegrees in terms of the boundary points of $\Mbar_{0,n+3}$ lying on a stratum isomorphic to $\Mbar_{0,n+2}$. To state the result precisely, we first recall that a boundary point of $\Mbar_{0,n+3}$ may be represented as a leaf-labeled \defn{trivalent tree}, that is, a tree with $n+3$ labeled leaves for which every vertex has degree $1$ or $3$.  As a convenient choice of notation, we label the $n+3$ leaves with $a,b,c,1,2,3,\ldots,n$, as in Figure \ref{fig:example-tournament}, and assume the ordering $a<b<c<1<2<\ldots<n$.  We sometimes use the notation $\Mbar_{0,\{a,b,c,1,2,\ldots,n\}}$, or in general $\Mbar_{0,X}$, to indicate that we are labeling our marked points by a set of labels $X$.

\begin{figure}
    \centering
    \includegraphics{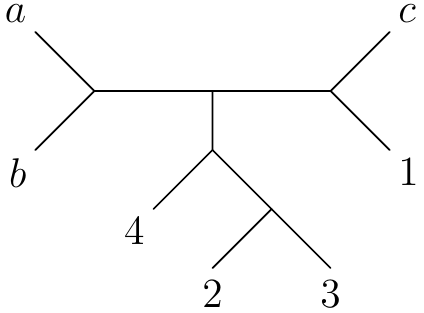}\hspace{1.5cm} \includegraphics{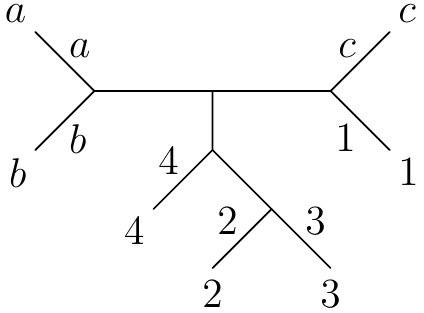}\hspace{1.5cm} \includegraphics{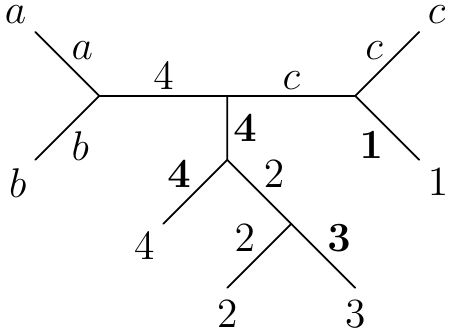}
    \caption{From left to right: A leaf-labeled trivalent tree $T$, its initial labeling of the leaf edges, and its full lazy tournament edge labeling.  Winners of each round of the tournament are shown in boldface at right, indicating $T \in \Tour(1, 0, 1, 2)$.}
    \label{fig:example-tournament}
\end{figure}

\begin{definition}\label{def:lazy-tournament}
Let $T$ be a leaf-labeled trivalent tree. The \textbf{lazy tournament} of $T$ is a labeling of the edges of $T$ computed as follows.  Start by labeling each leaf edge (that is, an edge adjacent to a leaf vertex) by the value on the corresponding leaf, as in the second picture of Figure \ref{fig:example-tournament}.  Then iterate the following process:
\begin{enumerate}
    \item \textbf{Identify which pair `face off'.} Among all pairs of labeled edges $(i,j)$ (ordered so that $i<j$) that share a vertex and have a third unlabeled edge $E$ attached to that vertex, choose the pair with the largest value of $i$.
    \item \textbf{Determine the winner.}  The larger number $j$ is the \textit{winner}, and the smaller number $i$ is the \textit{loser} of the match.
    \item \textbf{Determine which of $i$ or $j$ advances to the next round.}  Label $E$ by either $i$ or $j$ as follows:
    \begin{enumerate}
        \item If $E$ is adjacent to a labeled edge $u\neq j$ with $u>i$, then label $E$ by $i$.  (We say $i$ \textit{advances}.)
        \item Otherwise, label $E$ by $j$. (We say $j$ \textit{advances}.)
    \end{enumerate}  
\end{enumerate}
We then repeat steps 1-3 until all edges of the tree are labeled. 
\end{definition}

We refer to Step 3(a) above as the \textbf{laziness rule}, since $j$ drops out of the tournament despite winning its match. This happens when $j$ can see that its opponent $i$ will be defeated, again, in its next round against $u$. 

An example of the result of the lazy tournament process is shown in Figure \ref{fig:example-tournament}.  For a more detailed example, see Example \ref{ex:tournament} below.

\begin{remark}
The lazy tournament algorithm is geometrically motivated by the string equation, which leads to a recursion for the multidegrees discovered in \cite{CGM}. The string equation makes use of certain forgetting maps $$\pi_i:\Mbar_{0,\{a,b,c,1,2,\ldots,n\}}\to \Mbar_{0,\{a,b,c,1,2,\ldots,i-1,i+1,\ldots,n\}}$$ induced by forgetting the $i$-th marked point.  Similarly, when a label $j$ advances in a tournament after playing against label $i$, it can be thought of as applying the forgetting map $\pi_i$ to the corresponding stable curve and then relabeling the remaining points. We make this precise in Remark~\ref{rmk:forget-and-forgive}; see also Example \ref{ex:forget}. \end{remark}

We can now state the main result, which says that the multidegrees may be enumerated by keeping track of the winners in all possible tournaments.

\begin{definition}\label{def:tour}
For any weak composition $\mathbf{k} = (k_1,\dots, k_n)$ of $n$, let $\Tour(\mathbf{k})$ be the set of trivalent trees whose leaves are labeled by $\{a,b,c,1,\dots, n\}$, in which (a) the leaf edges $a$ and $b$ share a vertex, and (b) each label $i\geq 1$ wins exactly $k_i$ times in the tournament.
\end{definition}

For $n=0$, we write $\mathbf{k} = \emptyset$ for the empty composition and we have $\Tour(\emptyset) = \{T_0\}$, the unique such trivalent tree. In Figure \ref{fig:example-tournament}, the tree $T$ is in $\Tour(1, 0, 1, 2)$.

\begin{thm}\label{thm:tournaments}
 We have $\deg_{\mathbf{k}}(\emb_n)=|\Tour(\mathbf{k})|.$
\end{thm}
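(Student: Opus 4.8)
The plan is to prove $\deg_{\mathbf k}(\emb_n) = |\Tour(\mathbf k)|$ by induction on $n$, reducing to a recursion that both sides satisfy. The base case $n=0$ is immediate: $\emb_0$ embeds the point $\Mbar_{0,3}$ into an empty product of projective spaces, so $\deg_\emptyset(\emb_0) = 1 = |\Tour(\emptyset)|$. For the inductive step, Theorem \ref{thm:intro-odd-double-factorial} and \cite{CGM} supply a recursion expressing $\deg_{\mathbf k}(\emb_n)$ as a sum of multidegrees $\deg_{\mathbf k'}(\emb_{n-1})$, coming from the asymmetric string equation — concretely, from a forgetting map dropping a marked point (together with a relabeling) and the push-pull behavior of the omega classes under that map. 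Thus it suffices to prove that $|\Tour(\mathbf k)|$ obeys the same recursion; this is the content of Proposition \ref{prop:recursion}, and the bulk of the work lies there.

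To establish the recursion on the combinatorial side, I would study the forgetting operation on trees: given $T \in \Tour(k_1,\dots,k_n)$, delete the leaf labeled $n$ (the point dropped by the string equation's forgetting map) and stabilize, producing a trivalent tree $T'$ on $\{a,b,c,1,\dots,n-1\}$ — still satisfying condition (a), since $n\notin\{a,b\}$ — and, after the induced relabeling, a member of $\Tour(\mathbf k')$ for some composition $\mathbf k'$ of $n-1$. The key is to control how the lazy tournament changes. Because $n$ is the largest label, it wins every match it plays and never loses; it exits the tournament only through the laziness rule (Step 3(a)), so the edges labeled $n$ form a path — the \emph{trajectory} of $n$ — terminating at the vertex where it is forgiven. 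Using the ``forget and forgive'' dictionary of Remark \ref{rmk:forget-and-forgive}, which identifies each match ``$j$ advances over $i$'' with a forgetting map $\pi_i$ followed by a relabeling, one wants to show that the full tournament on $T'$ is obtained from that on $T$ by deleting the matches along $n$'s trajectory and performing the induced relabelings — so that the winner counts $k_1,\dots,k_{n-1}$ are modified only in a way depending on $k_n$ and on which label each defeat along the trajectory frees up. Conversely, reinserting the leaf $n$ along any of the $2n-1$ admissible edges of the reduced picture for $T'$ and re-running the tournament yields a tree in $\Tour(\mathbf k)$ for a predictable $\mathbf k$; partitioning the $2n-1$ sites by their effect on the statistics should reproduce exactly the terms of the CGM recursion.

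The main obstacle is precisely this compatibility: verifying that forgetting $n$ — or, dually, reinserting it — alters the tournament \emph{only} through the relabeling encoded by ``forget and forgive.'' The difficulty is that the laziness rule is mildly non-local: whether $j$ advances over $i$ at a vertex depends on the labels visible on edges adjacent to the outgoing edge, so introducing or removing the leaf $n$ near that vertex can flip the outcome of a match that does not itself involve $n$. One must check that every such flip is consistent with the $\pi_i$-then-relabel picture and that nothing outside the trajectory of $n$ is disturbed in any other way. I expect this to require a careful case analysis organized by the position of the leaf $n$ relative to the cherry $\{a,b\}$ and the other small labels, probably supported by an auxiliary lemma that $n$'s trajectory interacts with the rest of the tree only in a bounded neighborhood. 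An alternative route, avoiding the recursion entirely, would be to compose the equality $\deg_{\mathbf k}(\emb_n)=|\CPF(\mathbf k)|$ of \cite{CGM} with the explicit bijection between $\CPF(\mathbf k)$ and $\Tour(\mathbf k)$ constructed later in the paper; but the recursive proof has the advantage of directly exhibiting the geometric meaning of the lazy tournament via the string equation.
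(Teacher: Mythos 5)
Your high-level strategy (induct on $n$ and show $|\Tour(\mathbf{k})|$ satisfies the asymmetric string equation recursion of Proposition \ref{prop:recursion}) is the same as the paper's, but the combinatorial operation you propose is attached to the wrong marked point, and this is a genuine gap. The string equation forgets the point $i=\max\{\ell : k_\ell=0\}$, the rightmost zero of $\mathbf{k}$ --- not the point $n$. Indeed, the label $n$ always wins at least one round (it is maximal, so by the Participation and Winners Lemmas $k_n\ge 1$), hence $\omega_n$ genuinely appears in $\omega^{\mathbf{k}}$ and there is no string-equation-type pushforward along $\pi_n$ available from \cite{CGM}. Consequently, classifying trees in $\Tour(\mathbf{k})$ by deleting the leaf $n$ (or by the $2n-1$ reinsertion sites) cannot reproduce the terms $\widetilde{\mathbf{k}}_j$, $j>i$, of \eqref{eq:string}: those terms are obtained by decrementing an entry $k_j$ with $j>i$ and deleting the rightmost zero (Definition \ref{def:ktilde-j}), whereas deleting the leaf $n$ removes the last entry $k_n\ge 1$ and redistributes the wins of every label that lost to $n$, an effect you would then have to match against a geometric recursion that does not exist in the inputs you allow yourself. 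The non-locality of the laziness rule that you flag as ``the main obstacle'' is exactly where this breaks: removing $n$ perturbs matches throughout the tree, and there is no bounded-neighborhood lemma to rescue it.

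The paper's proof sidesteps all of this by deleting the \emph{loser of the first round} instead of the largest label. By the First Round Lemma (Lemma \ref{lem:biggest-loser}), the first pair to face off is $(i,j)$ with $i$ precisely the rightmost zero of $\mathbf{k}$, and the laziness rule applies in that round if and only if $k_j=1$. The map $\forget$ deletes the leaf edges $i$ and $j$, labels the merged vertex by $j$ (non-lazy, $k_j>1$) or $i$ (lazy, $k_j=1$), and relabels exactly as in Remark \ref{rmk:forget-and-forgive}; the auxiliary lemmas that losers weakly decrease, winners always win, and losers always lose guarantee that the remainder of the tournament is untouched, so the resulting composition is $\widetilde{\mathbf{k}}_j$ on the nose, and an explicit inverse (reinserting a cherry at the appropriate leaf) shows $\forget$ is a bijection. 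If you want to keep your plan, you must either redirect your forgetting operation to the point $i$ as above, or independently prove a new geometric recursion for $\deg_{\mathbf{k}}(\emb_n)$ corresponding to $\pi_n$, which \cite{CGM} does not supply. Your fallback via the $\CPF$ bijection is not available as stated, since you do not construct that bijection and, in the paper, its proof runs through the very same $\forget$ machinery.
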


As $\mathbf{k}$ varies over all compositions, the sets $\Tour(\mathbf{k})$ partition the complete set of boundary points on the divisor $\delta_{a,b} \cong \Mbar_{0, \{b, c, 1, \ldots, n\}}$, consisting of curves in which $a, b$ are alone on the same component. The boundary points on this divisor correspond to trivalent trees on $n+2$ vertices, of which there are $(2n-1)!!$, which gives the following corollary of Theorem \ref{thm:tournaments}.

\begin{corollary}\label{cor:total-degree}
  The total degree of the embedding $\emb_n:\Mbar_{0,n+3}\to \PP^1\times \PP^2 \times \cdots \times \PP^n$ is $$\deg(\emb_n)=\sum_{k_1+\cdots+k_n=n}\deg_{(k_1,\ldots,k_n)}(\emb_n)=\sum_{k_1+\cdots+k_n=n}|\Tour(k_1,\ldots,k_n)|=(2n-1)!!$$ for all $n$.
\end{corollary}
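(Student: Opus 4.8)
The plan is to deduce the corollary from Theorem~\ref{thm:tournaments} together with the partition statement asserted in the text just above it — that as $\mathbf k$ ranges over weak compositions of $n$, the sets $\Tour(\mathbf k)$ partition the boundary points of the divisor $\delta_{a,b}$. Granting Theorem~\ref{thm:tournaments}, we have
\[
\deg(\emb_n)\;=\;\sum_{k_1+\cdots+k_n=n}\deg_{(k_1,\dots,k_n)}(\emb_n)\;=\;\sum_{\mathbf k}\lvert\Tour(\mathbf k)\rvert,
\]
so everything reduces to showing that this last sum is $(2n-1)!!$. Writing $\mathcal T$ for the set of leaf-labeled trivalent trees on $\{a,b,c,1,\dots,n\}$ whose leaf edges $a$ and $b$ meet at a common vertex $v_1$, I would note that $\mathcal T$ is exactly the set of boundary points of $\delta_{a,b}\cong\Mbar_{0,\{b,c,1,\dots,n\}}$, a moduli space with $n+2$ marked points, so $\lvert\mathcal T\rvert=(2n-1)!!$ (as recalled in the introduction). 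Thus it is enough to prove that $\{\Tour(\mathbf k)\}$ is a partition of $\mathcal T$.

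For that, I would use that the lazy tournament of a given tree $T\in\mathcal T$ is a deterministic procedure, so it assigns to $T$ a well-defined win vector $\mathbf k(T)=(k_1,\dots,k_n)$ (with $k_i$ the number of matches won by label $i$), and $T\in\Tour(\mathbf k)$ exactly when $\mathbf k=\mathbf k(T)$. Hence the sets $\Tour(\mathbf k)$ are automatically pairwise disjoint and cover $\mathcal T$, and the one thing to check is that each vector $\mathbf k(T)$ that occurs is a weak composition of $n$, i.e.\ that $k_1+\cdots+k_n=n$. Since a trivalent tree with $n+3$ leaves has exactly $n$ internal edges, the leaf edges are labeled before round~1, and each round labels precisely one previously unlabeled (hence internal) edge, the tournament runs for exactly $n$ rounds and hands out exactly $n$ wins counted with multiplicity; so it remains to show that $a$, $b$, and $c$ win none of them.

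The key step would be to show that the match $(a,b)$ is never played. I would argue that at any point before the tournament ends the unlabeled edges form a nonempty subforest of $T$, which then has at least two leaves; one of these is $v_1$ (as long as its single incident internal edge $E_1$ is still unlabeled, since the leaf edges $a,b$ at $v_1$ are labeled from the start), so there is always another such leaf $w\neq v_1$, and $w$ supplies an available pair other than $(a,b)$. Because $a$ is the smallest label, the pair $(a,b)$ has the minimal possible first coordinate in Step~1 of Definition~\ref{def:lazy-tournament}, so it is selected only if it is the sole available pair — which never happens. From this, $b$ never wins (its only possible opponent is $a$, and only at $v_1$), and neither $a$ nor $b$ ever advances, so label $a$ (resp.\ $b$) occurs only on its own leaf edge at $v_1$; since $a$ is the global minimum it wins nothing, and $c$ — which could only beat $a$ or $b$ — could only do so in a match at $v_1$, the only match there being $(a,b)$. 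Therefore no win goes to $a$, $b$, or $c$, whence $k_1+\cdots+k_n=n$ and $\sum_{\mathbf k}\lvert\Tour(\mathbf k)\rvert=\lvert\mathcal T\rvert=(2n-1)!!$.

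I expect the main obstacle to be precisely this last point: ruling out wins by the small labels $a,b,c$, so that the $\Tour(\mathbf k)$ genuinely exhaust $\mathcal T$ rather than omitting the trees on which $b$ or $c$ would win. The driving observation is the elementary fact that the unlabeled edges form a forest and every forest containing an edge has at least two leaves; the remaining ingredients — that the tournament is well-defined and terminates after exactly $n$ rounds, and that $\lvert\mathcal T\rvert=(2n-1)!!$ — are routine but worth confirming before they are used.
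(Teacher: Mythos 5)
Your proposal is correct and follows essentially the same route as the paper: reduce via Theorem \ref{thm:tournaments} to counting $\bigsqcup_{\mathbf{k}}\Tour(\mathbf{k})$, observe that this union is exactly the set of trivalent trees with the $a,b$ leaf edges adjacent (equivalently, the boundary points on $\delta_{a,b}\cong\Mbar_{0,n+2}$), and invoke the classical count $(2(n+2)-5)!!=(2n-1)!!$. The only difference is that you re-derive from scratch the facts that the tournament has exactly $n$ rounds and that $a$, $b$, $c$ never win (your forest-leaves argument is a fine substitute), whereas the paper gets these from the Participation Lemma and a remark in Section \ref{sec:proof} and, for the count, makes the identification explicit via the forgetting map $\pi_b$.
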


Note that equation (\ref{eq:multidegree}) means that a generic choice of hyperplanes (with $k_i$ hyperplanes taken from the $\PP^i$ component for each $i$) intersects the image of the embedding $\emb_n$ in exactly $\deg_{(k_1,\ldots,k_n)}(\emb_n)$ points.   Since the set $\Tour(\mathbf{k})$ of trees represents a set of $\deg_{(k_1,\ldots,k_n)}(\emb_n)$ boundary points on $\Mbar_{0,n+3}$, it is natural to ask if there is a set of hyperplanes whose intersection with $\emb_n(\Mbar_{0,n+3})$ is $\Tour(\mathbf{k})$. In general there are not, as the example below shows.

\begin{example}\label{ex:intro}
The two trees in $\Tour(1,1)$, shown in Figure \ref{fig:tour11}, represent boundary points whose coordinates (with the conventions of Section \ref{sec:Kapranov}) under the map $\emb_2$ in $\PP^1\times \PP^2$ are ${[0:1]\times [0:1:0]}$ and ${[0:1]\times [0:1:1]}$. In any choice of hyperplanes $H_1$ from $\PP^1$ and $H_2$ from $\PP^2$ intersecting the embedding in these two points, we must have that $H_1$ is the single point $[0:1]$ in $\PP^1$, and $H_2$ must be the line $[0:1:t]$ in $\PP^2$.  However, the intersection $[0:1] \times [0:1:t]$ actually lies entirely on $\emb_2(\Mbar_{0,5})$, so the intersection is not transverse.  See Example \ref{ex:tour11} in Section \ref{sec:background-embedding} for more details.
\end{example}

\begin{figure}
    \centering
    \includegraphics{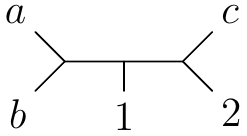} \hspace{2cm} \includegraphics{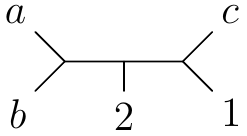}
    \caption{The two trees in $\Tour(1,1)$.  In both, the $c$ advances by the laziness rule on the first round, and is defeated once by each of $1$ and $2$.}
    \label{fig:tour11}
\end{figure}

Nonetheless, in future work we will show that the tournament points can sometimes be given as \textit{limits} of such intersections.  Towards this aim, we include in this paper the weaker result that there is always a set of hyperplanes, $k_i$ of them from $\PP^i$ for each $i$, whose intersection with $\emb_n(\Mbar_{0,n+3})$ \textit{contains} all the points in $\Tour(\mathbf{k})$.

\begin{thm}\label{thm:simple-hyperplanes}
Let $[z_b:z_c:z_1:z_2:\cdots:z_{r-1}]$ be the projective coordinates of the $\PP^r$ factor in $\PP^1\times \cdots \times \PP^n$ (with the conventions of Section \ref{sec:Kapranov}).  Then the coordinates of the points of $\Tour(k_1,\ldots,k_n)$ in the $\PP^r$ component all lie on the $k_r$ hyperplanes
$$z_b=0,\,\, z_c=0,\,\, z_1=0,\,\, \ldots,\,\, z_{k_r-2}=0,$$ where if $k_r=1$ then our collection only contains the hyperplane $z_b=0$, and if $k_r=2$ then we only have the two hyperplanes $z_b=0$ and $z_c=0$.  (If $k_r=0$ it is the empty collection.)
\end{thm}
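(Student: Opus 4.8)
The plan is to combine the factorization of $\emb_n$ through the forgetting maps with the known description of the Kapranov image of a boundary point, reducing the theorem to a single combinatorial statement about the trees in $\Tour(\mathbf{k})$.

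First I would set up the reduction. By Section~\ref{sec:Kapranov} the $\PP^r$-component of $\emb_n$ is the composite
\[\Mbar_{0,\{a,b,c,1,\dots,n\}}\ \longrightarrow\ \Mbar_{0,\{a,b,c,1,\dots,r\}}\ \xrightarrow{\ \kappa_r\ }\ \PP^r ,\]
where the first map forgets the marked points $r+1,\dots,n$ and $\kappa_r$ is the $\PP^r$-component of $\emb_r$. Writing $T'$ for the image of $T\in\Tour(\mathbf{k})$ under this forgetting map — a trivalent tree on $r+3$ leaves — it is enough to control the coordinates $[z_b:z_c:z_1:\cdots:z_{r-1}]$ of the single boundary point $\kappa_r(T')$. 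From the description in Section~\ref{sec:Kapranov} of the Kapranov image of a boundary point one extracts the following rule: letting $e^*$ be the internal edge of $T'$ incident to leaf $r$'s vertex that lies on the geodesic from leaf $a$ to leaf $r$ (such an edge exists because leaves $a,b$ still share a vertex of $T'$, so leaf $r$'s vertex is not leaf $a$'s), and letting $S(T')\subseteq\{b,c,1,\dots,r-1\}$ be the set of leaves separated from leaf $a$ by $e^*$, one has $z_\ell = 0$ exactly when $\ell\notin S(T')$. Since leaf $b$ always sits at leaf $a$'s vertex, $b\notin S(T')$. Hence the theorem is equivalent to: for every $T\in\Tour(\mathbf{k})$ the set $S(T')$ avoids the $k_r$ smallest entries of the ordered list $b<c<1<2<\cdots<r-1$, i.e.\ $\{b,c,1,\dots,k_r-2\}\cap S(T')=\varnothing$, equivalently $k_r\leq\#\{\ell\in\{b,c,1,\dots,r-1\}:\ell<\min S(T')\}$.

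Second I would prove this combinatorial inequality. Because $S(T')$ depends only on $T'$, and because forgetting the marked points $r+1,\dots,n$ does not decrease the number of wins of label $r$ (by the compatibility of the lazy tournament with the forgetting and relabeling maps — Proposition~\ref{prop:recursion} and Remark~\ref{rmk:forget-and-forgive}), it suffices to treat the case $r=n$, where $T'=T$ and $S(T)$ is the set of leaves hanging off leaf $n$'s vertex. Now label $n$ is the largest, so $k_n$ equals the number of matches it plays; I would bound this by following label $n$ through the tournament. The two facts to establish are: (a) the first match played by label $n$ is a win against $\min S(T)$ — the smallest leaf of $S(T)$ being forced out onto the edge incident to leaf $n$ by repeated application of the laziness rule; and (b) every subsequent win of label $n$ is against a coordinate-leaf $\ell$ with $b<\ell<\min S(T)$, since label $n$ is dragged strictly toward leaf $a$ along the $a$-to-$n$ path and is removed from play (by rule~3(a)) before it reaches leaf $a$'s vertex, where leaves $a$ and $b$ have already been eliminated. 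As $b$ is always among the coordinate-leaves below $\min S(T)$, (a) and (b) give $k_n\leq 1 + \big(\#\{\ell:\ell<\min S(T)\}-1\big)=\#\{\ell:\ell<\min S(T)\}$, as required.

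The hard part is establishing (a) and (b), i.e.\ pinning down exactly which edges become labeled when, and in particular when rule~3(a) takes label $n$ out of the tournament. The interaction of the global ``largest-loser-first'' selection of Step~1, rule~3(a), and the shape of the subtree hanging off leaf $n$'s vertex is delicate: one must show that the minimum of $S(T)$ is pushed onto the edge next to leaf $n$ before the edge toward leaf $a$ can be labeled (this uses that the match between leaves $a$ and $b$ is processed essentially last), and then that label $n$ advances one step at a time along the $a$-to-$n$ path, defeating at each vertex the smallest leaf branching off there, until rule~3(a) stops it. I expect this bookkeeping — most naturally organized as an induction that peels a single leaf off the tree and invokes Proposition~\ref{prop:recursion} — to occupy the bulk of a complete proof; the remaining pieces (the factorization, the Kapranov boundary-coordinate rule, and the translation into the statement about $S(T')$) are routine given the material of Section~\ref{sec:Kapranov}.
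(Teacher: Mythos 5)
Your endgame (bounding the number of wins of the largest label by the number of labels below the minimum of its separated branch, via strictly decreasing opponents) is essentially the paper's counting argument, but your route to it has a genuine gap: the reduction to the case $r=n$. You justify "forgetting the marked points $r+1,\dots,n$ does not decrease the number of wins of label $r$" by citing Proposition \ref{prop:recursion} and Remark \ref{rmk:forget-and-forgive}, but those results concern the map $\forget$, which forgets the \emph{first-round loser} $i$ (the largest index with $k_i=0$) together with specific relabelings; they say nothing about the maps $\pi_{r+1},\dots,\pi_n$ that forget the largest labels. Monotonicity of the win count of $r$ under $\pi_{r+1}\circ\cdots\circ\pi_n$ appears to be true, but it is nowhere proved and is not routine: establishing it requires tracking how removal of a top label changes which edge labels are adjacent when the laziness rule is tested, i.e.\ an analysis at least as heavy as the one you defer to the "hard part." The paper avoids needing any such statement by transferring the \emph{tree-structural} property backwards instead of the win count forwards: Lemma \ref{lem:separation} shows that if the winner $r$ separates $\ell$ from $a$ in $T'=\pi_{r+1}\circ\cdots\circ\pi_n(T)$, then it already does so in $T$ (since no label $r+1,\dots,n$ can be inserted on $r$'s leaf edge without making $r$ lose its first match), and then the counting is carried out in $T$ itself, where $k_r$ is by definition the number of wins.

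The second issue is that the content you label as bookkeeping — your facts (a) and (b), pinning down that the minimum of the relevant branch is pushed onto the edge at $r$'s vertex by the laziness rule, and that $r$'s subsequent opponents strictly decrease and exclude $a,b$ — is exactly the substance of the paper's proof (Lemma \ref{lem:branch-path}, combined with Lemmas \ref{lem:losers-decrease}, \ref{lem:winners-win}, \ref{lem:losers-lose}, \ref{lem:numbers-used}); these are not available to cite as known facts for this theorem and your sketch of them, while pointed in the right direction, leaves the decisive timing argument (that the edges along the path from the branch minimum to $r$'s vertex stay unlabeled until that minimum arrives, using that labels only advance toward $a$ and that losers weakly decrease) unproved. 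So the proposal is a plausible plan whose two load-bearing steps — the monotonicity reduction and the branch-path/laziness analysis — are both missing; replacing the former with an argument in the spirit of Lemma \ref{lem:separation} would both close the citation problem and simplify the proof to the paper's.
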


The remainder of the paper is organized as follows.  In Section \ref{sec:background}, we provide some necessary background and definitions on the geometry of $\Mbar_{0,n}$.  In Section \ref{sec:proof}, we examine lazy tournaments and prove Theorem \ref{thm:tournaments}. In Section \ref{sec:PFs}, we give a direct bijection between the lazy tournaments and the column restricted parking functions defined in \cite{CGM}. Finally, in Section \ref{sec:hyperplanes}, we prove Theorem \ref{thm:simple-hyperplanes}.

\subsection{Acknowledgments}

We thank Renzo Cavalieri and Mark Shoemaker for helpful conversations pertaining to this work. 

\section{Background}\label{sec:background}

\subsection{Structure of $\Mbar_{0,X}$ and trivalent trees}

Let $X=\{a, b, c, 1, \ldots, n\}$. A point of $\Mbar_{0,X}$ consists of an (isomorphism class of a) genus-$0$ curve $C$ with at most nodal singularities and distinct, smooth marked points $p_i \in C$ labeled by the elements $i \in X$, such that each irreducible component has at least three \textbf{special points}, defined as marked points or nodes. The \textbf{dual tree} of a point of $\Mbar_{0,X}$ is the graph consisting of an unlabeled vertex for each irreducible component $C' \subseteq C$, a vertex labeled $i$ for each $i \in X$, and edges connecting $i$ and the vertex corresponding to $C'$ when $p_i \in C'$, and connecting $C'$ and $C''$ when $C'$ and $C''$ meet at a node. The resulting graph is always a tree since the curve has genus $0$. (See Figure \ref{fig:strata}).

A tree is \textbf{trivalent} if every vertex has degree $1$ or $3$ and at least one vertex has degree $3$. A tree is \textbf{at least trivalent} if it has no vertices of degree $2$ and at least one vertex of degree $\ge 3$.  Notice that the dual tree of any stable curve is at least trivalent.

Let $\Gamma$ be an at-least-trivalent tree whose leaves are labeled by $X$.  Then the \textbf{boundary stratum} $D_\Gamma$ corresponding to $\Gamma$ is the set of all stable curves whose dual tree is $\Gamma$.  The boundary strata $D_\Gamma$ form a quasi-affine stratification (as defined in \cite{3264}) of $\Mbar_{0,X}$, and the zero-dimensional boundary strata, or \textbf{boundary points}, correspond bijectively to the trivalent trees on leaf set $X$.  Indeed, since the points are isomorphism classes of stable curves and an automorphism of $\PP^1$ is determined by where it sends three points, a stable curve whose dual tree is trivalent represents the only element of its isomorphism class.

When $\Gamma$ has exactly two vertices of $v, w$ of degree $\geq 3$, the closure $\overline{D_{\Gamma}}$ is of codimension one, called a \textbf{boundary divisor}. For $i, j \in X$, we write $\delta_{i, j}$ for the boundary divisor with $i, j$ on $v$ and all other leaves adjacent to $w$.

\begin{figure}
    \centering
    \includegraphics{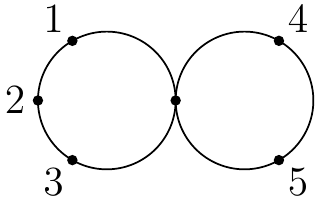}\hspace{1cm} \includegraphics{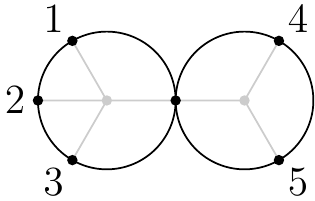}\hspace{1cm} \includegraphics{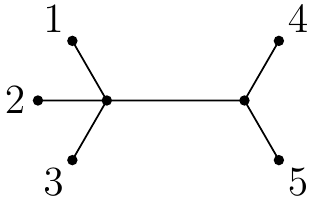}
    \caption{At left, a stable curve in $\Mbar_{0,5}$, in which each circle represents a copy of $\PP^1$.  At center, we form the dual tree $\Gamma$ of the curve, by drawing a vertex in the center of each circle and then connecting it to each marked point and adjacent circle. At right, we show $\Gamma$.  The set $D_\Gamma$ is the dimension-$1$ boundary stratum consisting of all stable curves in which $1,2,3$ are on one component and $4,5$ are on another.}
    \label{fig:strata}
\end{figure}

\subsection{The Kapranov morphism $\Mbar_{0,n+3} \to \mathbb{P}^n$}\label{sec:Kapranov}

For all facts stated throughout the next two subsections (\ref{sec:Kapranov} and \ref{sec:background-embedding}), we refer the reader to Kapranov's paper \cite{Ka1}, in which the Kapranov morphism below was originally defined.

The \textbf{$n$th cotangent line bundle} $\mathbb{L}_n$ on $\Mbar_{0,X}$ is the line bundle whose fiber over a curve $C\in \Mbar_{0,X}$ is the cotangent space of $C$ at the marked point $n$. The $n$-th \textit{$\psi$ class} is the first Chern class of this line bundle, written $\psi_n=c_1(\mathbb{L}_n)$. The corresponding map to projective space \[\kapn : \Mbar_{0,X} \to \mathbb{P}^n,\]
is called the Kapranov morphism.

We coordinatize this map as follows. It is known that $\kapn$ contracts each of the $n+2$ divisors $\delta_{n,i}$, for $i \ne n$, to a point $\bdpt_i := \kapn(\delta_{n,i}) \in \mathbb{P}^n$. These points are, moreover, in general linear position.  We choose coordinates so that $\bdpt_b, \bdpt_c, \bdpt_1, \ldots, \bdpt_{n-1} \in \mathbb{P}^n$ are the standard coordinate points $[1:\cdots : 0], \ldots, [0:\cdots:1]$ and $\bdpt_a$ is the barycenter $[1 : 1 : \cdots : 1]$. We name the projective coordinates $[z_b : z_c : z_1 : \cdots : z_{n-1}]$.
   
Given a curve $C$ in the interior $M_{0,X}$, by abuse of notation we also write $p_a,p_b,p_c,p_1,\dots,p_n$ for the coordinates of the $n+3$ marked points on the unique component of $C$, after choosing an isomorphism $C\cong \bP^1$. With these coordinates, the restriction of $\kapn$ to the interior $M_{0,X}$ is given by
\begin{equation} \label{eq:kap-interior}
    \kapn(C) = \bigg[
    \frac{p_a - p_b}{p_n - p_b} : 
    \frac{p_a - p_c}{p_n - p_c} : 
    \frac{p_a - p_1}{p_n - p_1} : 
    \cdots :
    \frac{p_a - p_{n-1}}{p_n - p_{n-1}}
    \bigg].
\end{equation}
It is often convenient to choose coordinates on $C$ in which $p_a = 0$ and $p_n = \infty$, in which case the map simplifies to
\[\kapn(C) = [p_b : p_c : p_1 : \cdots  : p_{n-1}].\]

With this coordinatization, we can take limits from the interior to obtain coordinates of $\kapn(C)$ for $C$ on the boundary of $\Mbar_{0,X}$.  In particular, consider the boundary stratum given by the dual graph:

\begin{center}
  \includegraphics{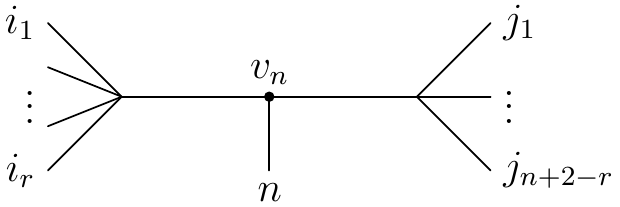}
\end{center} Here $\{a,b,c,1,2,\ldots,n-1\}=I\sqcup J$ where $I=\{i_1,\ldots,i_r\}$ and $J=\{j_1,\ldots,j_{n+2-r}\}$. Without loss of generality suppose $i_1=a$.  The points in this stratum may be obtained by taking a limit having the property that the points $p_{i}$ for $i\in I\setminus \{a\}$ approach $p_a=0$ and the points $p_j$ for $j\in J$ all approach $1$ (and $p_n=\infty$).  Thus, any point in this stratum has coordinates $[p_b:p_c:p_1:\cdots :p_{n-1}]$ where $p_i=0$ if $i\in I$ and $p_{j}=1$ if $j\in J$.  Now, since any trivalent tree is in the closure of a unique such stratum (by considering the two branches connected to the leaf edge $n$), we obtain the following.

\begin{lemma}\label{lem:coordinates}
  Let $C$ be a boundary point of $\Mbar_{0,X}$ corresponding to the trivalent tree $T$.  Let $v_n$ be the internal vertex of $T$ adjacent to the leaf edge whose leaf is labeled $n$, and consider the two remaining branches of $T$ connected to $v_n$.  Then $\kapn(C)=[z_b:z_c:z_1:z_2:\cdots:z_n]$ where $z_i=0$ if the leaf $i$ is on the same branch as the leaf $a$, and $z_i=1$ otherwise. 
\end{lemma}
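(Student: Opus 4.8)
The plan is to deduce the lemma from the coordinate computation for the boundary strata pictured just before the statement, together with the fact that the Kapranov morphism $\kapn$ is constant on each such stratum, hence on its closure.

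\emph{Step 1: identify the relevant stratum.} Given the trivalent tree $T$, the internal vertex $v_n$ incident to the leaf edge $n$ has degree $3$; let $B_1$ and $B_2$ be the two branches of $T$ connected to $v_n$ other than that leaf edge, with the leaf $a$ lying on, say, $B_1$. Let $I$ and $J$ be the sets of leaves on $B_1$ and $B_2$ respectively, so $I\sqcup J=\{a,b,c,1,\dots,n-1\}$, $a\in I$, and both $I$ and $J$ are nonempty (were $J$ empty, $v_n$ would have degree $2$). Let $\Gamma$ be the tree obtained from $T$ by contracting every internal edge not incident to $v_n$; then $\Gamma$ is exactly the dual tree displayed before the statement of the lemma, with leaf set $I$ on the branch through one edge of $v_n$ and $J$ on the branch through the other (in the degenerate case $|I|=1$ or $|J|=1$ the relevant leaf is attached directly to $v_n$, and since $|I|+|J|=n+2\geq 3$ at most one such collapse occurs, so $\Gamma$ remains at least trivalent). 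Since $\Gamma$ is obtained from $T$ by contracting edges, the boundary point $C=D_T$ lies in the closure $\overline{D_\Gamma}$.

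\emph{Step 2: invoke the preceding computation and conclude.} By the discussion immediately before the lemma, every point of $D_\Gamma$ has Kapranov coordinates $[z_b:z_c:z_1:\cdots:z_{n-1}]$ with $z_i=0$ for $i\in I$ and $z_i=1$ for $i\in J$; since $J\neq\emptyset$ not all of these coordinates vanish, so this is a well-defined point of $\PP^n$ and $\kapn$ is constant on $D_\Gamma$. As $D_\Gamma$ is irreducible (it is a product of lower-dimensional spaces $\Mbar_{0,\bullet}$) and dense in $\overline{D_\Gamma}$, and $\kapn$ is a morphism, $\kapn$ takes this same value on all of $\overline{D_\Gamma}$; in particular $\kapn(C)$ equals it. Rewriting ``$i\in I$'' as ``the leaf $i$ lies on the same branch of $v_n$ as $a$'' gives the asserted formula.

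I do not expect a genuine obstacle: the mathematical content is the coordinate computation already carried out for these strata, and the lemma merely repackages it via the observation that each trivalent tree degenerates uniquely to such a stratum through the vertex $v_n$. The only points needing a moment's care are checking that $\Gamma$ is a bona fide at-least-trivalent dual tree in the boundary cases $|I|=1$ or $|J|=1$, and the standard rigidity fact that a morphism to projective space constant on a dense subset of an irreducible variety is constant.
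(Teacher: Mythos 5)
Your proposal is correct and follows essentially the same route as the paper, which deduces the lemma directly from the coordinate computation on the stratum pictured just before it together with the remark that any trivalent tree lies in the closure of the unique such stratum determined by the two branches at $v_n$. You merely make explicit the continuity-on-the-closure step and the degenerate cases $|I|=1$ or $|J|=1$, which the paper leaves implicit.
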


\begin{example}
 Consider the tree below.
 \begin{center}
     \includegraphics{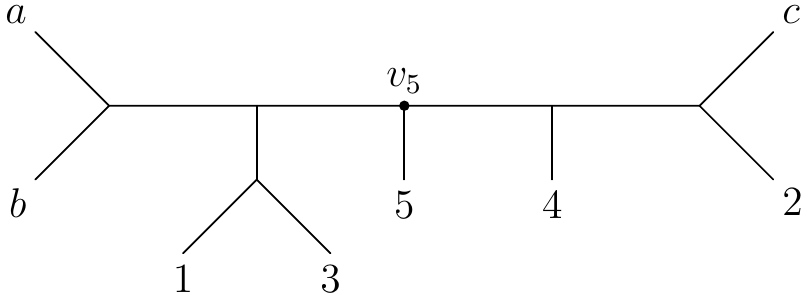}
 \end{center}
 The $5$ is the largest leaf, and it is connected to a vertex $v_5$ that is in turn connected to two other branches, one to the left of $v_5$, and one to the right of $v_5$.  The branch on the left contains $a$, so we have $z_b = z_1 = z_3 = 0$ and, from the other branch, $z_c = z_2 = z_4 = 1$.  Hence the tree maps under the Kapranov map to the point $$[z_b:z_c:z_1:z_2:z_3:z_4]=[0:1:0:1:0:1] \in \mathbb{P}^5.$$
\end{example}

\subsection{The iterated Kapranov embedding $\emb_n$}\label{sec:background-embedding}

Let $\pi_n : \Mbar_{0,X}\to \Mbar_{0,X\setminus n}$ be the $n$th forgetting map, which sends a stable curve $C$ to the stable curve $\pi_n(C)$ obtained by forgetting the point marked by $n$, and then collapsing any components with only two special points. If the dual tree of $C$ is $T$, then the dual tree of $\pi_n(C)$ is obtained from $T$ by deleting the label $n$ and its leaf, and then contracting any edges with degree $2$.

It is known that the Kapranov morphism, combined with $\pi_n$, gives a closed embedding
\begin{align*}
\Mbar_{0, X} &\hookrightarrow \mathbb{P}^n \times \Mbar_{0, X\setminus n}. \\
C &\mapsto \big( \kapn(C), \ \pi_n(C) \big).
\end{align*}

We may repeat this construction using the map $|\psi_{n-1}|$ on $\Mbar_{0, X\setminus n}$, and so on, obtaining a sequence of embeddings. This gives the {\bf iterated Kapranov morphism}
\[\emb_n: \Mbar_{0,X} \hookrightarrow \mathbb{P}^1 \times \mathbb{P}^2 \times \cdots \times \mathbb{P}^n.\]
The $i$-th factor of this embedding is given by forgetting the points $p_{i+1}, \ldots, p_n$, then applying the Kapranov morphism $|\psi_i|$ on the smaller moduli space. We can also combine the forgetting maps with Lemma \ref{lem:coordinates} to obtain the coordinates of any boundary point of $\Mbar_{0,X}$ under the embedding.

\begin{corollary}\label{cor:full-coordinates}
Let $C$ be a boundary point of $\Mbar_{0,X}$ corresponding to the trivalent tree $T$. Given an integer $1\leq r \leq n$, let $T'$ be the tree corresponding to $\pi_{r+1}\circ\pi_{r+2}\circ\cdots\circ\pi_n(C)$, let $v_r$ be the internal vertex of $T'$ adjacent to leaf edge $r$, and consider the three branches at $v_r$. Then the coordinates of $\emb_n(C)$ in the $\PP^r$ factor are $[z_b:z_c:z_1:\cdots:z_r]$ where $z_i=0$ if leaf $i$ is on the same branch as $a$ in $T'$, and $z_i=1$ otherwise.
\end{corollary}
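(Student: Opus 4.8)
The plan is to reduce the statement to Lemma \ref{lem:coordinates} by stripping off the forgetting maps one at a time. Recall from the construction of the iterated Kapranov embedding that the composite of $\emb_n$ with the projection onto the $\PP^r$ factor is exactly $|\psi_r|\circ\pi_{r+1}\circ\pi_{r+2}\circ\cdots\circ\pi_n$, where $\pi_{r+1}\circ\cdots\circ\pi_n$ maps $\Mbar_{0,X}$ to $\Mbar_{0,\{a,b,c,1,\ldots,r\}}$ and $|\psi_r|$ is the Kapranov morphism on that smaller moduli space, coordinatized exactly as in Section \ref{sec:Kapranov} (with $\bdpt_b,\bdpt_c,\bdpt_1,\ldots,\bdpt_{r-1}$ taken to the standard coordinate points and $\bdpt_a$ to the barycenter $[1:\cdots:1]$ of $\PP^r$). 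So it is enough to identify the image of $C$ under $\pi_{r+1}\circ\cdots\circ\pi_n$ and then invoke Lemma \ref{lem:coordinates} on $\Mbar_{0,\{a,b,c,1,\ldots,r\}}$.

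First I would check that $C':=\pi_{r+1}\circ\cdots\circ\pi_n(C)$ is again a boundary point, with dual tree $T'$. Using the description of $\pi_n$ on dual trees recalled above — delete the leaf and its edge, then contract the resulting degree-$2$ vertex — each forgetting step sends a trivalent tree to a trivalent tree on one fewer leaf: the only vertex whose degree changes is the one adjacent to the deleted leaf, which drops from degree $3$ to degree $2$ and is then contracted away, and for every intermediate moduli space occurring here there are at least four leaves so at least one trivalent vertex survives. Iterating, $T'$ is trivalent and $C'$ is the boundary point of $\Mbar_{0,\{a,b,c,1,\ldots,r\}}$ corresponding to $T'$, as claimed in the statement.

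Finally I would apply Lemma \ref{lem:coordinates}, with $n$ there replaced by $r$ and $X$ by $\{a,b,c,1,\ldots,r\}$, to the boundary point $C'$ and its trivalent tree $T'$. Let $v_r$ be the internal vertex of $T'$ adjacent to the leaf edge $r$; the lemma gives that $|\psi_r|(C')=[z_b:z_c:z_1:\cdots:z_r]$ with $z_i=0$ when leaf $i$ lies on the branch at $v_r$ containing the leaf $a$, and $z_i=1$ otherwise. Combined with the first paragraph, this is precisely the asserted formula for the $\PP^r$-coordinates of $\emb_n(C)$.

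I do not anticipate a genuine obstacle: the corollary is a bookkeeping consequence of the definition of $\emb_n$ and of Lemma \ref{lem:coordinates}. The two points worth stating carefully are that the $\PP^r$ sitting inside $\PP^1\times\cdots\times\PP^n$ is coordinatized by the Section \ref{sec:Kapranov} conventions for $|\psi_r|$ (so that the normalization sending $\bdpt_a$ to the barycenter is the one producing the $0/1$ dichotomy), and the routine verification in the second paragraph that forgetting the last $n-r$ marked points really does return a trivalent tree, so that ``the tree $T'$'' is well defined.
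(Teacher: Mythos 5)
Your proposal is correct and matches the paper's intended argument: the paper derives this corollary exactly by observing that the $\PP^r$ factor of $\emb_n$ is $|\psi_r|\circ\pi_{r+1}\circ\cdots\circ\pi_n$ and then applying Lemma \ref{lem:coordinates} to the resulting boundary point of $\Mbar_{0,\{a,b,c,1,\ldots,r\}}$. Your extra verification that the forgetting maps carry trivalent trees to trivalent trees is the only step the paper leaves implicit, and it is handled correctly.
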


\begin{example}\label{ex:tour11}
 Consider the two points in $\Tour(1,1)$, shown below.
 \begin{center}
     \includegraphics{tree-1-2.pdf}\hspace{2cm}\includegraphics{tree-2-1.pdf}
 \end{center}In the first, the $2$ separates $a,b,1$ from $c$, so it maps to $[0:1:0]$ in the $\PP^2$ factor.  In the second, the $2$ separates $a,b$ from $1,c$, so it maps to $[0:1:1]$ in the $\PP^2$ factor.
 
 In both, forgetting the point $2$ yields the tree:
 \begin{center}
 \includegraphics{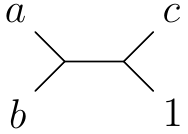}
 \end{center}
 Here, the $1$ separates $a,b$ from $c$, so both points map to $[0:1]$ in the $\PP^1$ factor.  We therefore obtain, as claimed in Example \ref{ex:intro}, that the two points in $\Tour(1,1)$ map to $[0:1]\times [0:1:0]$ and $[0:1]\times [0:1:1]$.
 
 Moreover, the only pair of hyperplanes from $\PP^1$ and $\PP^2$ that contains both defines the locus $[0:1]\times [0:1:t]$, which are the coordinates of the points on the stratum:
 \begin{center}
     \includegraphics{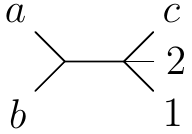}
 \end{center}
 since we may think of the point $1$ as varying on its component of the curve while $c$ and $2$ are fixed.
\end{example}

\subsection{Omega classes and a recursion for the multidegrees}

We now recall some facts about the multidegrees of the embedding $\emb_n$ proven in \cite{CGM}.  Throughout, let $\mathbf{k}=(k_1,\ldots,k_n)$ be a \textbf{weak composition} of $n$, that is, a sequence of nonnegative integers whose sum is $n$.

Recall from Equation \ref{eq:multidegree} that the $\mathbf{k}$-th \textbf{multidegree} of $\emb_n$ is the intersection number  $$\deg_{(k_1,\ldots,k_n)}(\emb_n)=\int_{\PP^1\times \cdots \times \PP^n}[\emb_n(\Mbar_{0,n+3})]\prod H_i^{k_i},$$ where $H_i$ is the class of a hyperplane in $\PP^i$ in the Chow ring of $\PP^1\times \cdots \times \PP^n$. Following \cite{CGM}, one can pull back the classes $H_i$ to the Chow ring of $\Mbar_{0,n+3}$ and express the multidegrees using \textbf{omega classes}, which are the pullbacks of hyperplane classes.

\begin{definition}[Omega classes]
Define $\omega_i=\emb_n^\ast H_i$ for all $i$.
\end{definition}

We can express multidegrees as intersections of omega classes: writing $\omega^\mathbf{k}$ for $\omega_1^{k_1} \cdots \omega_n^{k_n}$,
\[
\deg_{(k_1,\ldots,k_n)}(\emb_n)=\int_{\Mbar_{0,n+3}}\omega_1^{k_1}\omega_2^{k_2}\cdots \omega_n^{k_n} = \int_{\Mbar_{0,n+3}} \omega^\mathbf{k}.
\]
Furthermore, because the embedding $\emb_n$ is defined in terms of $\psi$ classes and forgetting maps \cite{CGM}, the omega classes are, equivalently, $\omega_i=f_i^\ast \psi_i$, where $f_i$ is the composite forgetting map \[f_i=\pi_{i+1}\circ \pi_{i+2} \circ \cdots \circ \pi_n:\Mbar_{0,n+3}\to \Mbar_{0,i+3}.\]
The multidegrees satisfy a recursion, stated as Proposition~\ref{prop:recursion} below.

\begin{definition}[Tilde Construction] \label{def:ktilde-j}
For any weak composition $\mathbf{k}=(k_1,\ldots,k_n)$ of $n$, let $i=\max\{\ell: k_\ell=0\}$ be the rightmost index such that $k_i=0$, where we set $i=c$ if $\mathbf{k}=(1,1,1,\ldots,1)$ is the unique composition of length $n$ with no $0$ entries. Let $j>i$, in the ordering $a<b<c<1<2<\cdots<n$.  

Then define $\widetilde{\mathbf{k}}_j$ to be the weak composition of size and length $n-1$ formed by (a) decreasing $k_j$ by $1$ and then (b) removing the rightmost $0$ in the resulting sequence (which is either the $i$-th or $j$-th entry).
\end{definition}

\begin{example}
 If $\mathbf{k}=(0,1,0,0,2,1,3)$, then $\widetilde{\mathbf{k}}_5=(0,1,0,1,1,3)$ since it is formed by subtracting one from the fifth entry, $2$, and removing the rightmost $0$.  On the other hand, $\widetilde{\mathbf{k}}_6=(0,1,0,0,2,3)$ since it is formed by subtracting one from the sixth entry, $1$, and then removing the new $0$.
\end{example}

\begin{prop}[Asymmetric string equation {\cite[Prop 4.10]{CGM}}]\label{prop:recursion}  Let $i$ be the index of the rightmost $0$ in $\mathbf{k}$.  Then the  multidegrees satisfy the recursion
\begin{equation} \label{eq:string}
    \deg_{\mathbf{k}}(\emb_n)=\sum_{j>i}\deg_{\widetilde{\mathbf{k}}_j}(\emb_{n-1}).
\end{equation}
In more detail, letting $\pi_i : \Mbar_{0, abc1\cdots n} \to \Mbar_{0, abc1\cdots\hat{i}\cdots n}$ be the forgetting map, 
\begin{align} \label{eq:stringeq1}
    \int_{\Mbar_{0, abc1\cdots n}} \omega^\mathbf{k} &= \int_{\Mbar_{0, abc1\cdots\hat{i}\cdots n}} (\pi_i)_*(\omega^\mathbf{k}) \\ \label{eq:stringeq2}
    &= \int_{\Mbar_{0, abc1\cdots\hat{i}\cdots n}} \sum_{j > i} \alpha_j \text{ for certain } \alpha_j \in H^*(\Mbar_{0, abc1\cdots\hat{i}\cdots n}), \\ \label{eq:stringeq3}
    &= \int_{\Mbar_{0, abc1\cdots n{-}1}} \sum_{j > i} \omega^{\widetilde{\mathbf{k}}_j},
\end{align}
where expression \eqref{eq:stringeq3} is obtained from \eqref{eq:stringeq2} by applying to each summand a \emph{different} identification $\Mbar_{0, abc1\cdots\hat{i}\cdots n} \cong \Mbar_{0, abc1\cdots n{-}1}$.
\end{prop}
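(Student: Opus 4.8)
The plan is to prove the three displayed equalities in turn, reducing the statement to the classical string equation for $\psi$ classes applied level-by-level along the tower of forgetting maps. Throughout I use that $\omega_\ell = f_\ell^\ast\psi_\ell$ with $f_\ell=\pi_{\ell+1}\circ\cdots\circ\pi_n$, and that forgetting maps of distinct points commute (up to the canonical identification of their targets). First I would establish \eqref{eq:stringeq1}. The key observation is that for each $\ell<i$ the composite $f_\ell$ forgets all of $\ell+1,\dots,n$, and since $\ell<i\le n$ it in particular forgets the point $i$; as forgetting maps of distinct points commute, $f_\ell$ factors as $f_\ell = g_\ell\circ\pi_i$ for a forgetting map $g_\ell$ out of $\Mbar_{0,abc1\cdots\hat i\cdots n}$. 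Hence $\omega_\ell=\pi_i^\ast\bar\omega_\ell$ is a $\pi_i$-pullback for every $\ell<i$, while $\omega_i^{k_i}=1$ because $k_i=0$. Writing $\omega^{\mathbf k}=\pi_i^\ast\!\big(\prod_{\ell<i}\bar\omega_\ell^{k_\ell}\big)\cdot\prod_{\ell>i}\omega_\ell^{k_\ell}$ and applying the projection formula together with $\int_{\Mbar_{0,abc1\cdots n}}=\int_{\Mbar_{0,abc1\cdots\hat i\cdots n}}\circ\,(\pi_i)_*$, I obtain \eqref{eq:stringeq1}, with $(\pi_i)_*\omega^{\mathbf k}=\big(\prod_{\ell<i}\bar\omega_\ell^{k_\ell}\big)\cdot(\pi_i)_*\big(\prod_{\ell>i}\omega_\ell^{k_\ell}\big)$.

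The heart of the argument is \eqref{eq:stringeq2}, namely computing $(\pi_i)_*\prod_{\ell>i}\omega_\ell^{k_\ell}$. For $\ell>i$ the map $f_\ell$ does \emph{not} forget $i$, so $i$ survives as a marked point of $\Mbar_{0,\ell+3}$ and the square relating $\pi_i$ (forgetting $i$ upstairs) to the map $q_i$ forgetting $i$ at level $\ell+3$ commutes. Pulling the classical comparison $\psi_\ell = q_i^\ast\bar\psi_\ell + D_{i,\ell}$ (with $D_{i,\ell}$ the divisor on which $i$ and $\ell$ collide) back through this square via $f_\ell$ gives $\omega_\ell = \pi_i^\ast\omega_\ell^{(i)} + B_\ell$, where $\omega_\ell^{(i)}=\bar f_\ell^{\,\ast}\bar\psi_\ell$ is the analogous omega class downstairs and $B_\ell=f_\ell^\ast D_{i,\ell}$ is a pulled-back collision divisor. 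Expanding $\prod_{\ell>i}\big(\pi_i^\ast\omega_\ell^{(i)}+B_\ell\big)^{k_\ell}$ and pushing forward, the fully pulled-back term dies because $(\pi_i)_*1=0$, and the surviving contributions are exactly those meeting the divisors $B_\ell$. I would then verify the analogues of the classical string-equation relations — $\pi_i^\ast\omega_\ell^{(i)}\cdot B_\ell=0$, vanishing of the relevant products of distinct $B_\ell$, and the reduction that $(\pi_i)_*(B_\ell^{\,k})$ is a multiple of $(\omega_\ell^{(i)})^{k-1}$ coming from $\pi_i$ being a $\PP^1$-bundle with section along $D_{i,\ell}$. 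These collapse the pushforward to a single sum $\sum_{j>i}\alpha_j$, one term per index $j$ whose exponent (necessarily $k_j\ge1$, since $i$ is the rightmost zero) is lowered by one. The restriction to $j>i$ is precisely the asymmetry: the factors with $\ell<i$ were stripped off as pullbacks in Step 1 and so never enter this sum.

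Finally, for \eqref{eq:stringeq3} I would identify each $\alpha_j$, which a priori lives on $\Mbar_{0,abc1\cdots\hat i\cdots n}$, with an omega monomial on the standard space $\Mbar_{0,abc1\cdots n-1}$. The $j$-th term lowers $k_j$ by one, after which the labels $\{a,b,c,1,\dots,n\}\setminus\{i\}$ must be renamed to $\{a,b,c,1,\dots,n-1\}$; the slot that disappears is $i$ when $k_j\ge2$ but is $j$ when $k_j=1$, since lowering $k_j$ to $0$ creates a new rightmost zero. This is exactly the Tilde construction of Definition~\ref{def:ktilde-j}, and matching these two cases to the two possible deletions is what forces the \emph{different} identification $\Mbar_{0,abc1\cdots\hat i\cdots n}\cong\Mbar_{0,abc1\cdots n-1}$ for each $j$, yielding $\alpha_j=\omega^{\widetilde{\mathbf k}_j}$ and hence \eqref{eq:stringeq3} and \eqref{eq:string}.

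The main obstacle is Step 2: rigorously controlling the boundary corrections $B_\ell=f_\ell^\ast D_{i,\ell}$ when the $\psi$ classes being compared sit at \emph{different} levels of the tower. Unlike the classical string equation, where all collisions occur on a single moduli space and the relevant divisors are manifestly disjoint, here the $B_\ell$ are pulled back from distinct stages $f_\ell$, so their cross-level intersections are not automatically zero. The crux of the proof is to show that all such cross terms and higher collision powers either vanish or reorganize exactly into the monomials $\omega^{\widetilde{\mathbf k}_j}$ with the correct per-$j$ relabelings — this is the content of the asymmetric string equation, and everything else is formal manipulation via the projection formula.
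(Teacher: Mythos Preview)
The paper does not actually prove this proposition: it is stated with a citation to \cite[Prop~4.10]{CGM} and illustrated by Example~\ref{ex:stringex}, but no proof appears here. So there is no ``paper's own proof'' to compare your proposal against.

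That said, your sketch is essentially the argument of \cite{CGM}. Step~1 (peeling off $\omega_\ell$ for $\ell<i$ as $\pi_i$-pullbacks and applying the projection formula) and Step~3 (the per-$j$ relabelings matching the Tilde construction, with the $k_j=1$ vs.\ $k_j\ge 2$ dichotomy) are exactly right and agree with what is described in Remark~\ref{rmk:forget-and-forgive}. Your Step~2 is the right shape---pull back $\psi_\ell = q_i^\ast\bar\psi_\ell + D_{i,\ell}$ through the commuting square, expand, and collapse via the string-equation relations---and you correctly flag the genuine difficulty: the collision divisors $B_\ell = f_\ell^\ast D_{i,\ell}$ live at different levels of the tower, so their cross-products are not obviously controlled by the classical disjointness argument. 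In \cite{CGM} this is handled by an induction that processes the forgetting maps one at a time rather than expanding the full product at once; your proposal to do it ``all at once'' would require proving the vanishing of those cross terms directly, which you have identified but not carried out. As a proof \emph{proposal} this is fine, but be aware that this is where the work is, and the cleanest way through is the level-by-level induction rather than a global expansion.
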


\begin{example}[See {\cite[Ex. 4.6]{CGM}}] \label{ex:stringex}
For $\mathbf{k} = (1, 0, 0, 0, 2, 1, 3)$, the asymmetric string equation gives
\begin{align}
\int_{\Mbar_{0,abc1234567}}\omega_1\omega_5^2\omega_6\omega_7^3
 &= \int_{\Mbar_{0, abc123567}} (\pi_4)_*(\omega_1\omega_5^2\omega_6\omega_7^3) \\
\label{eq:stringex1}
 &= \int_{\Mbar_{0, abc123567}}\big(\omega_1(\psi_5\psi_6\psi_7^3) + \omega_1(\psi_5^2\psi_7^3) + \omega_1(\psi_5^2\psi_6\psi_7^2) \big)\\ \label{eq:stringex2}
&= \int_{\Mbar_{0,abc123456}}(\omega_1\omega_4\omega_5\omega_6^3 +
\omega_1\omega_5^2\omega_6^3 +
\omega_1\omega_4^2\omega_5\omega_6^2).
\end{align}
Line \eqref{eq:stringex1} uses, in part, the ordinary string equation for psi classes regarding $\pi_4$. Then, line \eqref{eq:stringex2} uses a different identification $\Mbar_{0, abc123567} \cong \Mbar_{0, abc123456}$ for each of the three terms. Later, Proposition \ref{prop:bijection} will imply that (in this example) the same operation $\pi_i$ and relabelings lead to a corresponding bijection
\begin{equation*}
\Tour(1,0,0,0,2,1,3) \xrightarrow{\ \sim\ } \Tour(1,0,0,1,1,3) \sqcup \Tour(1,0,0,0,2,3) \sqcup \Tour(1,0,0,2,1,2).
\end{equation*}
See Example~\ref{ex:forget} for full details.
\end{example}

The relabelings in \eqref{eq:stringeq2}--\eqref{eq:stringeq3} are stated precisely in Remark \ref{rmk:forget-and-forgive}, where we apply the same relabeling maps (along with $\pi_i$) to tournament points, as part of our proof of the multidegree formula in Section \ref{sec:proof}. Although we will continue to work with trees and point sets rather than cohomology classes, these forgetting maps and relabelings directly inspired the definition of lazy tournament.

Note that the recursion \eqref{eq:string} in Proposition \ref{prop:recursion} is similar to the recursion for the multinomial coefficients $\binom{n}{k_1,\ldots,k_m}=\frac{n!}{k_1!k_2!\cdots k_m!}$.  Recall that for any composition $(k_1,\ldots,k_m)$ of $n$ with all parts nonzero, we have $$\binom{n}{k_1,\ldots,k_m}=\sum_{j=1}^m \binom{n}{k_1,k_2,\ldots,k_{j-1},k_j-1,k_{j+1},\ldots,k_m}.$$  Defining the \textbf{asymmetric multinomial coefficient} to be the corresponding multidegree, that is, $$\multichoose{n}{k_1,\ldots,k_n}=\deg_{(k_1,\ldots,k_n)}(\emb_n),$$ we can restate Proposition \ref{prop:recursion} as $$\multichoose{n}{\bf k}=\sum_{j>i} \multichoose{n}{\widetilde{\bf k}_j}$$
for any weak composition $\mathbf{k}$ of $n$ into $n$ parts.

\section{Tournaments and the proof of Theorem \ref{thm:tournaments}}\label{sec:proof}

The main goal of this section is to prove Theorem \ref{thm:tournaments}, which we restate for convenience below. We say an edge of a tree is a \defn{leaf edge} if it is adjacent to a leaf vertex.  Also recall from Definition \ref{def:tour} that $\Tour(k_1,\ldots,k_n)$ is the set of trivalent trees $T$ whose leaves are labeled by $\{a,b,c,1,\dots, n\}$ in which the leaf edges $a$ and $b$ share a vertex, and each label $i\geq 1$ wins exactly $k_i$ times in the lazy tournament of $T$.

\begin{TournamentsThm}
   For any weak composition $\mathbf{k}=(k_1,\ldots,k_n)$ of $n$, we have $$\deg_{\mathbf{k}}(\emb_n)=|\Tour(\mathbf{k})|.$$
\end{TournamentsThm}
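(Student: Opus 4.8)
The plan is to prove Theorem~\ref{thm:tournaments} by induction on $n$, matching the combinatorial recursion on tournament trees with the asymmetric string equation (Proposition~\ref{prop:recursion}). The base case $n=0$ is immediate: $\Tour(\emptyset) = \{T_0\}$ has one element and $\deg_\emptyset(\emb_0) = 1$. For the inductive step, fix a weak composition $\mathbf{k} = (k_1,\dots,k_n)$ of $n$ and let $i$ be the index of the rightmost $0$ in $\mathbf{k}$ (with the convention $i = c$ when $\mathbf{k} = (1,1,\dots,1)$, matching Definition~\ref{def:ktilde-j}). The string equation gives $\deg_{\mathbf{k}}(\emb_n) = \sum_{j>i} \deg_{\widetilde{\mathbf{k}}_j}(\emb_{n-1})$, and by induction each term equals $|\Tour(\widetilde{\mathbf{k}}_j)|$. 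So it suffices to produce a bijection
\[
\Tour(\mathbf{k}) \ \xrightarrow{\ \sim\ } \ \bigsqcup_{j > i} \Tour(\widetilde{\mathbf{k}}_j),
\]
and this is exactly the content of the forthcoming ``Proposition~\ref{prop:bijection}'' alluded to in Example~\ref{ex:stringex}.

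The heart of the argument is constructing this bijection. The idea, foreshadowed in Remark~\ref{rmk:forget-and-forgive} and in the ``forget and forgive'' heuristic, is: given $T \in \Tour(\mathbf{k})$, run the lazy tournament and look at the match in which the label $i$ (or, in the all-ones case, the label $c$) first plays — more precisely, identify the round associated to the rightmost vacant column — and let $j$ be its opponent when it loses. Forgetting the appropriate marked point and relabeling the remaining leaves according to the scheme that realizes $\Mbar_{0,abc1\cdots\widehat{i}\cdots n} \cong \Mbar_{0,abc1\cdots(n-1)}$ produces a tree in $\Tour(\widetilde{\mathbf{k}}_j)$; the value $j$ records which summand. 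Concretely I would (i) describe the forgetting-and-relabeling map $\Phi_j$ on trees explicitly, (ii) check it lands in $\Tour(\widetilde{\mathbf{k}}_j)$ — i.e.\ that the leaf edges $a,b$ still share a vertex and that the win counts transform as $k \mapsto \widetilde{\mathbf{k}}_j$, which is where the laziness rule must be analyzed carefully, since the whole point of Step 3(a) is that it commutes with forgetting the point being ``skipped over'' — and (iii) construct the inverse by re-inserting a leaf in the position dictated by $j$ and verifying that the reinserted label participates in the tournament exactly as prescribed.

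The step I expect to be the main obstacle is (ii): showing that the win multiset of the relabeled, point-forgotten tree is precisely $\widetilde{\mathbf{k}}_j$. This requires understanding how the sequence of matches in the lazy tournament of $T$ relates to the sequence of matches in $\Phi_j(T)$ — in particular that deleting the marked point $i$ removes exactly the one match that $i$ loses (to $j$) and otherwise leaves every other match, and its winner, intact, after the index shift induced by relabeling. The laziness rule is designed precisely so that this holds: when $j$ ``advances lazily'' over $i$ it is anticipating the later defeat of $i$, which corresponds on the geometric side to the term in the string equation where the $\psi$-class exponent is not lowered; I would need to make this correspondence into a clean lemma about how match-orderings behave under leaf deletion, likely by induction on the number of rounds, tracking the edge labels through Step 1's ``largest $i$'' selection rule. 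Once this lemma is in place, well-definedness, the win-count computation, and invertibility should all follow by bookkeeping, and the theorem follows by assembling the bijection with the string equation as above.
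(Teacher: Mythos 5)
Your proposal is essentially the paper's own proof: induction via the asymmetric string equation, together with a first-round ``forget $i$, relabel'' map $\Tour(\mathbf{k})\to\coprod_{j>i}\Tour(\widetilde{\mathbf{k}}_j)$ shown to be a bijection, which is exactly the map $\forget$ of Section~\ref{sec:proof} and Proposition~\ref{prop:bijection}. The obstacle you flag in step (ii) is precisely what the paper resolves with its technical lemmas (losers weakly decrease, winners always win, losers always lose, the Participation Lemma, and the First Round Lemma identifying the first loser as the rightmost zero index and tying the laziness rule to $k_j=1$), so your plan is on target.
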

   
Recall the definition of a lazy tournament from Definition \ref{def:lazy-tournament}. We begin this section with an illustrative example of the tournament process.

\begin{example}\label{ex:tournament}
Start with the following tree, in which we have labeled each leaf edge by the label of the corresponding leaf:
\begin{center}
\includegraphics{small-tree-edge-labels.pdf}
\end{center}
To start the tournament, we first compare all three pairs of leaves whose leaf edges share a vertex: $(a,b)$, $(2,3)$, and $(c,1)$.  The one with the largest smaller entry is $(2,3)$ (since the chosen ordering is $a<b<c<1<2<3<4$) and so they face off first.  The number $3$ \emph{wins}, and by the laziness rule with $u=4$, the number $2$ \emph{advances}.  We highlight the winner in boldface and draw the new label:
\begin{center}
\includegraphics{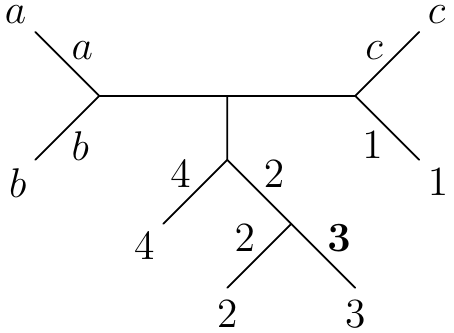}
\end{center}
For the next round, our possible pairs are $(a,b)$, $(c,1)$, and $(2,4)$, so $2$ and $4$ face off next.  The entry $4$ wins (shown in boldface) and advances since the laziness rule does not apply:
\begin{center}
\includegraphics{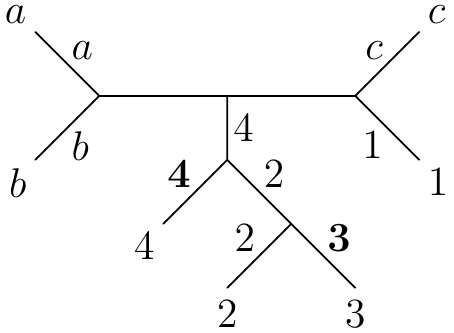}
\end{center}
For the next round, our possible pairs are just $(a,b)$ and $(c,1)$, and the one with the larger smaller entry is $(c,1)$.  So $1$ and $c$ face off, with $1$ winning and $c$ advancing by the laziness rule with $u=4$:
\begin{center}
\includegraphics{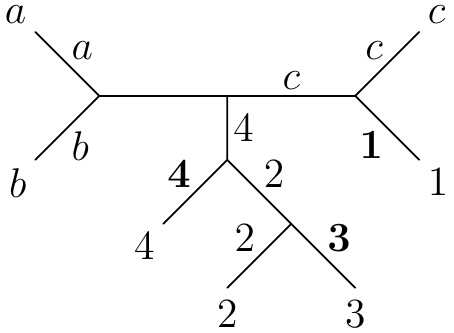}
\end{center}
In the final round, $c$ faces off against $4$, and $4$ wins and advances:
\begin{center}
\includegraphics{small-tree-tournament.pdf}
\end{center}
Since $1$ won one round, $3$ won one round, and $4$ won two rounds, this tree is an element of $\Tour(1,0,1,2)$.
\end{example}

\begin{remark}
A trivalent tree with $n+3$ labeled leaves has exactly $n$ rounds in its lazy tournament.
\end{remark}

\begin{remark}
The set of \emph{all} trivalent trees with leaves labeled $a, b, c, 1, \ldots, n$, such that the leaf edges of $a$ and $b$ share a vertex, is the disjoint union of $\Tour(\mathbf{k})$ over all weak compositions $\mathbf{k} = (k_1, \ldots, k_n)$ of $n$.
\end{remark}

Before embarking on the proof of Theorem \ref{thm:tournaments}, we illustrate it in the case $n=2$.

\begin{example}
 The three leaf-labeled trivalent trees on $\{a,b,c,1,2\}$ in which the leaf edges of $a$ and $b$ share a vertex are shown below:
 \begin{center}
 \includegraphics{tree-1-2.pdf}\hspace{2cm} \includegraphics{tree-2-1.pdf}\hspace{2cm} \includegraphics{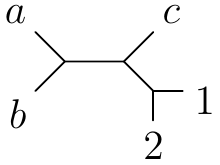}  
 \end{center}
 In the tournament of the first two, the labels $1$ and $2$ each win one round, and in the tournament of the third, the label $2$ wins both rounds.  Thus $\Tour(1,1)=2$ and $\Tour(0,2)=1$, and so by Theorem \ref{thm:tournaments} we have $\deg_{(1,1)}(\emb_2)=2$, $\deg_{(0,2)}(\emb_2)=1$, and $\deg_{(2,0)}(\emb_2)=0$.
\end{example}

\subsection{Combinatorial results about tournaments}

We now prove a number of technical lemmas about tournaments that will be helpful in the proof of Theorem~\ref{thm:tournaments}.  The first is that the losing elements of each round weakly decrease as the tournament is run.

\begin{lemma}\label{lem:losers-decrease}
  Let $T$ be a trivalent tree on $n+3$ leaves, and let $i_1,i_2,\ldots,i_n$ be the smaller elements of the pairs that face off in its lazy tournament in each round, listed in order from start to finish. Then we have $i_1\ge i_2 \ge \cdots \ge i_n$.
\end{lemma}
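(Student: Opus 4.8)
The plan is to track a single quantity across consecutive rounds: the value $i_t$ of the loser (equivalently, the smaller element of the pair that faces off) in round $t$. I want to show $i_{t+1} \le i_t$ for every $t$. The natural approach is to examine the local picture around the match in round $t$ and argue that after that match, no pair with smaller element strictly exceeding $i_t$ can ever be created, so the selection rule in Step 1 (choose the pair with the largest smaller element) must pick something $\le i_t$ in round $t+1$ and thereafter.

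First I would set up the key invariant. At the start of round $t$, consider the set $S_t$ of all currently-labeled edges that are \emph{eligible}, meaning they share a vertex with another labeled edge and a third unlabeled edge (so they could face off now or later). The rule says $i_t$ is the maximum, over all eligible vertices $v$ (a vertex with two labeled and one unlabeled edge), of $\min$ of the two labels at $v$. I would prove the stronger claim that for every \emph{labeled} edge $e$ present at the start of round $t$ that is not yet ``finished'' (i.e.\ still has an unlabeled neighbor), if $e$ lies at an eligible vertex then the smaller of the two labels there is $\le i_t$ — this is just the definition of $i_t$ as a max. The real content is: the single new label created in round $t$ (on edge $E$, equal to either the winner $j$ or loser $i=i_t$ by the laziness rule) does not create, at the start of round $t+1$, any eligible vertex whose smaller label exceeds $i_t$.

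So the main step is the case analysis at the new edge $E$. The new vertex $v'$ at the far end of $E$ — the one where $E$ will next be able to face off — has $E$'s new label together with at most one other labeled edge $u$ and one unlabeled edge; by the laziness rule, $E$'s label is $i_t$ precisely when such a $u$ exists with $u > i_t$, in which case $\min(i_t, u) = i_t$, fine. If instead $E$ gets label $j$ (the ``otherwise'' branch of Step 3), then either $v'$ has no second labeled edge yet (so $v'$ is not eligible and contributes nothing), or it has a labeled edge $u$ with $u \le i_t$ (since $u = j$ is excluded and $u > i_t$ would have triggered laziness), so $\min(j, u) = u \le i_t$. Either way the new eligible vertex, if any, has smaller-label $\le i_t$. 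All other eligible vertices at the start of round $t+1$ were already present (possibly as not-yet-eligible) at the start of round $t$, and the match in round $t$ only \emph{removed} labels from circulation at the two losing/advancing vertices — it did not raise any $\min$ — so their smaller-labels were already $\le i_t$ by maximality of $i_t$ in round $t$. Hence $i_{t+1} = \max \le i_t$, and the lemma follows by induction on $t$.

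I expect the main obstacle to be bookkeeping the local geometry cleanly: one must be careful that the ``eligible vertex'' at the far end of $E$ is genuinely the only new place a face-off can occur, and that the disappearance of the loser's and winner's edges from the pool of eligible edges cannot somehow \emph{expose} a previously-ineligible vertex with a large smaller-label. The latter cannot happen because a vertex becomes eligible only when it acquires a \emph{second} labeled edge, and in round $t$ the only edge that becomes labeled is $E$; removing labels never makes a degree-$3$ internal vertex go from one labeled edge to two. Making this precise — perhaps by phrasing everything in terms of ``the multiset of $\min$-values over eligible internal vertices, and how it changes when one new label is added at $E$'' — is the crux, but it is a finite local check rather than a global argument.
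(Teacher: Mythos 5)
Your proposal is correct and takes essentially the same route as the paper: the paper's proof of Lemma \ref{lem:losers-decrease} is exactly this local case analysis on the labeled edges at the far endpoint of the newly labeled edge $E$ (no labeled neighbor, a neighbor $u<i_1$, or a neighbor $u>i_1$ triggering the laziness rule), carried out for the first round to get $i_1\ge i_2$ and then closed by induction, while you package the identical check as an invariant bounding the minimum label at every eligible vertex in each round. The only point both you and the paper gloss over is that the second labeled edge at the far vertex cannot carry the label $j$ itself, which holds because the edges bearing a fixed label form a connected set while the two endpoints of the still-unlabeled edge $E$ lie in different components of $T\setminus E$.
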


\begin{proof}
  We show that $i_1\ge i_2$; the remaining inequalities follow by induction on $n$.
  
  The first pair of leaves to face off in the tournament is $(i_1,j)$ for some $j>i_1$, and by the definition of the tournament, all other pairs of leaves $(i',j')$ have $i'<i_1$.  Let $E$ be the third edge adjacent to leaf edges $i_1$ and $j$, as defined in Definition~\ref{def:lazy-tournament}.
  
  \textbf{Case 1:} Suppose $E$ is not adjacent to any other labeled edge besides $i_1$ and $j$.  Then after the first round, the new possible pairs to consider for determining who faces off next are all of the form $(i',j')$ with $i'<i_1$.  Since $i_2$ is among the values $i'$, we have $i_2<i_1$.
  
  \textbf{Case 2:} Suppose $E$ is adjacent to two other labeled edges besides $i_1$ and $j$. Then we are at the last round of the tournament, and the statement of the lemma holds trivially.
  
  \textbf{Case 3:} Suppose $E$ is adjacent to exactly one other labeled edge $u$.  If $u<i_1$, then $j$ advances in the tournament and $(u,j)$ becomes one of the new pairs to consider along with the other $(i',j')$ pairs.  So in the next round, $i_2$ is either $u$ or one of the $i'$ values, so since $u<i_1$ and each $i'<i_1$, we again have $i_2<i_1$.

  Otherwise, if $u>i_1$, then by the laziness rule $i_1$ advances, and it becomes a new pair $(i_1,u)$ to consider for the next round; since $i_1$ is still largest among the smaller elements of each leaf pair $(i',j')$, we have $i_2=i_1$ in this case.
  
  Thus in all cases $i_1\ge i_2$, and similarly $i_1\ge i_2 \ge \cdots \ge i_n$.
\end{proof}

\begin{definition}
Given a label $i$ of a leaf-labeled trivalent tree in $\Tour(k_1,\dots,k_n)$, we say $i$ is a \textbf{winner} if it wins any round of the tournament, and it is a \textbf{loser} if it loses any round of the tournament. 
\end{definition}

The next two lemmas show that every label that participates in at least one match is either a winner or a loser but cannot be both. That is, ``winners always win'' and ``losers always lose''.

\begin{lemma}[Winners Lemma]\label{lem:winners-win}
Suppose $j$ is a label of a leaf-labeled trivalent  tree. If $j$ wins the first round in which it competes during the lazy tournament, then $j$ wins all subsequent rounds in which it competes.
\end{lemma}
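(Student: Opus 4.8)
The plan is to track the label $j$ through the tournament and show that once it wins, it never subsequently loses. First I would set up notation: suppose $j$ wins its first match, say against some opponent $i < j$, and let $E$ be the third edge at that vertex, so after this round $E$ is labeled $j$ (since $j$ won and advanced — note the laziness rule cannot apply to make $j$ drop out here, because that would require $E$ to already be adjacent to a labeled edge $u > i$; if such $u$ exists, then actually $j$ would still advance only if $u = j$... I need to be careful here, so let me restructure). The cleaner approach: I want to argue that after $j$'s first win, the edge currently carrying the label $j$ is always adjacent (on its far end) either to no labeled edge, or to a labeled edge whose value is smaller than $j$. If I can maintain this invariant, then in any future round where $j$ competes, $j$ is the larger of the pair (so it wins the match), and moreover the laziness rule -- which would only demote $j$ if the third edge were adjacent to a labeled edge $u > i$ with $u \neq j$, i.e. $u$ larger than $j$'s current opponent but not equal to $j$ -- still results in $j$ advancing unless that $u$ exceeds $j$; so I must show no edge larger than $j$ can ever become adjacent to $j$'s edge.

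The key steps, in order: (1) Establish that when $j$ wins a round against $i$ and advances along edge $E$, any already-labeled edge $u$ adjacent to $E$ must satisfy $u < i < j$ (because if $u > i$ the laziness rule would have demoted $j$ to $i$, contradicting that $j$ advanced; and $u \neq j$ since labels on distinct edges at a pre-tournament stage are distinct). (2) By Lemma~\ref{lem:losers-decrease}, the smaller elements $i_1 \geq i_2 \geq \cdots$ of successive matches weakly decrease; so every label introduced into a match after $j$'s first win has its \emph{smaller} element $\leq i$, hence strictly less than $j$. This means $j$ can never be the loser of a match it plays in, as long as $j$ is still in play -- the other participant, being a ``smaller element'' of some pair or having faced one, is bounded by the decreasing losers. (3) Combine: in each subsequent round $j$ competes, $j$ is larger than its opponent so $j$ wins; and when we check whether $j$ advances, the third edge $E'$ is adjacent only to labels that are either unlabeled or (by the decreasing-losers bound and induction) strictly smaller than $j$, so the laziness condition ``$E'$ adjacent to labeled $u > i'$ with $u \neq j$'' forces $u < j$, and then $j$ advances (we are in case 3(b) relative to $j$, or $j$ is itself the advancing label). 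Induct on the number of rounds to propagate the invariant.

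The main obstacle I expect is step (3): carefully verifying that the laziness rule never demotes $j$ after its first win. The laziness rule demotes the winner $j$ precisely when the onward edge touches a labeled edge $u$ with $u$ larger than the current loser $i'$ and $u \neq j$. I need $u < j$, which should follow from the fact that all labels ever appearing on edges adjacent to $j$'s path are ``descendants'' of matches whose smaller-elements are $\leq i <j$ together with winners of those matches -- but a winner could a priori be large. The resolution is that such a large winner $w > j$ would itself have had to win its first match, and by symmetry/induction $w$ is on its own winning path which, I must show, is disjoint from $j$'s edge until they would meet in a match -- at which point $j$ loses, contradiction, \emph{unless} such a configuration is impossible because $w > j$ being adjacent to $j$'s edge while $j$ is still advancing would have triggered laziness against $j$ earlier. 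Disentangling this may require a simultaneous induction on all labels, or a more global structural claim about the shape of the region of labeled edges at each stage; I would aim to phrase it as: ``at every stage, the set of labels currently on edges forms a configuration in which each label's onward edge sees only strictly smaller labels,'' proven by induction on rounds using Lemma~\ref{lem:losers-decrease} at each step.
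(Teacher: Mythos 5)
Your step (2) already contains the entire proof, and it is exactly the paper's argument: once $j$ defeats some $i<j$, Lemma~\ref{lem:losers-decrease} says every later loser is $\le i < j$; since the loser of a match is by definition its smaller member, $j$ can never again be the smaller member of a pair it competes in, so whenever $j$ competes it wins. Nothing more is needed, because the lemma only speaks of rounds in which $j$ \emph{competes}: whether $j$ advances is irrelevant. If the laziness rule demotes $j$ after a win, then $j$ simply never appears on another edge and the conclusion is vacuous for all later rounds.

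The genuine gap is therefore your step (3) and the ``main obstacle'' built on it, which you leave unresolved. You conflate winning with advancing, and the invariant you try to maintain --- that after its first win $j$ always advances, equivalently that no troublesome labeled edge ever sits next to $j$'s onward edge --- is both unnecessary and false. It also rests on a misreading of Definition~\ref{def:lazy-tournament}: the laziness rule labels the new edge by the \emph{loser} $i'$ whenever the onward edge touches a labeled edge $u\neq j$ with $u>i'$; it does not require $u>j$. So a winner can perfectly well be demoted even by a label smaller than itself (and winners are demoted at their very first win all the time, e.g.\ label $3$ in Example~\ref{ex:tournament} and label $1$ in Figure~\ref{fig:tour11}). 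Trying to rule this out leads you into the global induction you sketch at the end, which cannot succeed as stated. The repair is simply to drop steps (1) and (3): cite Lemma~\ref{lem:losers-decrease}, observe $j>i$ cannot occur as a later loser, and conclude.
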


\begin{proof}
  Suppose $j$ wins a round against $i<j$ and advances. Since the sequence of losers in the tournament decreases by Lemma~\ref{lem:losers-decrease}, then $j$ cannot appear after $i$ in the list of losers. Therefore, $j$ must win all of the subsequent rounds in which it competes.
\end{proof}

\begin{lemma}[Losers Lemma]\label{lem:losers-lose}
Suppose $i$ is a label of a leaf-labeled trivalent tree. If $i$ loses the first round in which it competes during the lazy tournament, then $i$ loses all subsequent rounds in which it competes.
\end{lemma}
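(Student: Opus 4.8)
The plan is to follow the template of the Winners Lemma (Lemma~\ref{lem:winners-win}) but to supplement Lemma~\ref{lem:losers-decrease} with a structural observation about the tree, since Lemma~\ref{lem:losers-decrease} alone does \emph{not} forbid a label that has lost from later winning: a label that loses round $t$ and later wins a round $t'$ produces losers $i_t > i_{t'}$, which respects the weak decrease and yields no contradiction. The key preliminary is that, because round $t$ is the \emph{first} round in which $i$ competes, just before round $t$ the only edge of the tree carrying the label $i$ is the leaf edge at leaf $i$, which I will call $e_i$; a new edge acquires the label $i$ only when $i$ advances, and $i$ has not yet competed.

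Next I would fix notation: $i$ loses round $t$ to $j > i$ at a vertex $w$ whose three edges are $e_i$, an edge labeled $j$, and the edge $E$ of Definition~\ref{def:lazy-tournament}; let $v$ be the other endpoint of $E$. After round $t$ all three edges at $w$ are labeled. If the laziness rule does not apply, then $j$ advances, $E$ is labeled $j$, and $e_i$ remains the unique edge labeled $i$; being a leaf edge at the now fully-labeled vertex $w$, it can never again be part of a match, so $i$ never competes again and the lemma is vacuous. So suppose the laziness rule applies, so $E$ is labeled $i$; by its hypothesis there is a labeled edge adjacent to $E$ whose label $u$ satisfies $u \neq j$ and $u > i$. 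The edges adjacent to $E$ are $e_i$, the edge labeled $j$, and the edges at $v$, and neither of the first two qualifies, so this edge lies at $v$ (in particular $v$ is internal); call it $f$, with label $u > i$.

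The crux is then the claim: if there is any round after round $t$ in which an edge labeled $i$ competes, the earliest such round is a match at $v$ between $E$ and $f$, and $i$ loses it because $i < u$. Indeed, after round $t$ the label $i$ sits only on $e_i$ (frozen at the fully-labeled $w$) and on $E$; a match using $E$ cannot take place at $w$, and at $v$ the only admissible match using $E$ pairs it with the already-labeled edge $f$ (the remaining edge at $v$ serving as the unlabeled edge). Iterating --- after losing to $u$ the label $i$ either freezes or advances exactly once more, onto an edge whose far endpoint again carries a strictly larger label --- then gives, by induction on the number of rounds remaining, that $i$ loses every subsequent match it plays. I expect the bookkeeping in this last step to be the main (if modest) obstacle: one must confirm that between two consecutive matches played by $i$ the label $i$ cannot migrate along any other route, which again comes down to the label $i$ propagating only when $i$ competes, but this deserves to be written out rather than merely asserted.
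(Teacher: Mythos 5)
Your proposal is correct and follows essentially the same route as the paper's (much terser) proof: the key point in both is that a label which has lost can only advance via the laziness rule, which guarantees the newly labeled edge is adjacent to a labeled edge with a strictly larger label, so the next match it plays is again a loss, and one iterates. Your added bookkeeping (that the label $i$ only propagates when $i$ competes, and that the next match involving the advanced edge must be against that larger-labeled neighbour) is exactly the detail the paper leaves implicit in ``Thus it also loses its next round, and so on.''
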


\begin{proof}
  If $i$ loses but also advances to a future round, it must have done so via the laziness rule, and so it is already adjacent to another edge that is larger than it.  Thus it also loses its next round, and so on.
\end{proof}

We now restrict our attention to trivalent trees having $a,b$ adjacent.  Recall that $\Tour(\mathbf{k})$ is the set of trivalent trees whose leaves are labeled by $\{a,b,c,1,\dots, n\}$ in which the leaf edges $a$ and $b$ share a vertex, and each label $i\geq 1$ wins exactly $k_i$ times in the tournament.

\begin{lemma}[Participation Lemma]\label{lem:numbers-used}
 Let $T\in \Tour(\mathbf{k})$ for a weak composition $\mathbf{k}$ of $n\geq 1$.  In the tournament of $T$, neither $a$ nor $b$ compete in any round, and every other label competes in some round.  Moreover, when a label advances, it always advances forward along its path towards $a$.
 \end{lemma}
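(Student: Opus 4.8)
The plan is to analyze the tournament round by round, using the structure of the tree with $a,b$ adjacent and the monotonicity of losers (Lemma~\ref{lem:losers-decrease}). First I would observe that since the leaf edges $a$ and $b$ share a vertex $v$, the pair $(a,b)$ is eligible to ``face off'' from the very first round and remains eligible until the third edge $E$ at $v$ gets labeled. But $(a,b)$ has the smallest possible ``smaller element'' among all eligible pairs (as $a$ is the minimum of the order $a<b<c<1<\cdots<n$), so by Step~1 of Definition~\ref{def:lazy-tournament}, $(a,b)$ is chosen only when it is the \emph{unique} eligible pair. I claim this never happens: the unique-eligible-pair situation occurs exactly at the final round, when only one unlabeled edge remains; but in a trivalent tree with $a,b$ on a common vertex $v$, the last unlabeled edge cannot be the edge $E$ at $v$, because $E$ becomes labeled as soon as $a$ and $b$ would face off, and more carefully, one should check that the process always labels $E$ strictly before the global final round. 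The cleanest way is: whenever $E$ is the only unlabeled edge incident to some already-labeled pair, there is at least one such pair at $v$ \emph{and} $E$ is adjacent to further structure, so some pair with a larger smaller-element is always available until $E$ is labeled --- hence $(a,b)$ is deferred forever. Therefore neither $a$ nor $b$ ever competes.

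Next, for the claim that every other label competes: I would argue by contradiction. The edges labeled by the tournament form, at each stage, a growing ``used'' region; a label $\ell \in \{c,1,\dots,n\}$ fails to compete only if its leaf edge is never selected as one of the two edges in a face-off. Since the tournament runs for exactly $n$ rounds and terminates only when all edges are labeled (Definition~\ref{def:lazy-tournament} plus the remark that there are $n$ rounds), I would track which leaf edges can ``survive'' unused. The key point is that a leaf edge $\ell$ survives only if it never becomes one of the two labeled edges at a vertex with an unlabeled third edge --- but as the labeled region grows to cover the whole tree, every internal vertex eventually has all three incident edges considered, and at the moment the third edge at $\ell$'s vertex is about to be labeled, $\ell$ together with the other labeled edge at that vertex forms an eligible pair. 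So $\ell$ must compete. The only escape is if $\ell$'s vertex is the vertex $v$ of $a,b$ --- but then $\ell$ would be $a$ or $b$, contradiction. (One must be slightly careful that $\ell$'s leaf edge shares a vertex with exactly one internal vertex, which has exactly two other incident edges; both of those get labeled at some point before the tournament ends, and the later of the two triggers the face-off involving $\ell$.)

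For the final assertion --- that when a label advances it moves forward along its path toward $a$ --- I would use Lemma~\ref{lem:coordinates} / the tree structure together with induction. When label $\ell$ wins or loses at a vertex $w$ and advances onto the third edge $E$ at $w$, the edge $E$ is the one pointing ``away'' from the two leaf edges that fed into $w$. Since neither $a$ nor $b$ ever competes, and $a$'s leaf edge stays labeled-but-dormant throughout, the unlabeled region of the tree always contains the path from $a$'s vertex outward; advancing onto $E$ means moving into the (shrinking) unlabeled region, which is precisely the direction of $a$. More concretely: at the start, the labeled edges are exactly the leaf edges, and each face-off consumes two adjacent labeled edges and produces a new labeled edge one step ``inward''; the global picture is that the labeled region grows from the leaves inward toward a shrinking connected unlabeled subtree, and because $a$ (hence $b$) never competes, that shrinking subtree always contains the vertex $v$ adjacent to $a$. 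Hence each advance step of any label $\ell$ moves it one edge closer to $v$, i.e., forward along the unique path from $\ell$'s leaf to $a$'s leaf.

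I expect the main obstacle to be making the second paragraph fully rigorous: precisely showing that no leaf edge other than $a,b$ can be ``skipped'' requires a clean invariant describing the set of labeled edges at each round (e.g., that it is always the complement of a connected subtree containing $v$, with the face-offs corresponding exactly to the leaves of that complementary forest). Once that invariant is established --- and it simultaneously powers the ``advances toward $a$'' claim --- the rest follows quickly. I would state and prove this connectivity invariant as a sublemma and then deduce all three parts of the Participation Lemma from it.
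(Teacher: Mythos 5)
Your approach is essentially the paper's: the paper also proves this by orienting all edges toward $a$ and establishing exactly your ``connectivity invariant'' --- that the unlabeled edges always form a connected subtree rooted at the vertex $v_a$ adjacent to $a,b$, with each round labeling a leaf edge of that subtree --- from which all three claims (never $(a,b)$, everyone else competes, advances go toward $a$) follow just as you outline. Two small corrections to your write-up: in the final round the last unlabeled edge \emph{is} the edge $E$ at $v_a$, so your claim that it ``cannot be'' that edge is false; the pair $(a,b)$ is avoided there not because the situation never arises but because the other endpoint of $E$ carries an eligible pair whose smaller element exceeds $a$ (indeed exceeds $b$, since $a,b$ never advance), which is the correct reasoning you then give. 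Also, your parenthetical that ``the later of the two triggers the face-off involving $\ell$'' is not automatic --- a priori an edge can be labeled by a face-off at either of its endpoints --- and it is precisely the rooted-subtree invariant (edges are only labeled from their endpoint farther from $a$) that rules this out; since you explicitly propose proving that invariant as a sublemma, the gap closes, and the resulting proof coincides with the paper's.
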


\begin{proof}
  Since we use the ordering $a<b<c<1<2<3<\cdots<n$ and $a,b$ are adjacent leaves in $T$, we see that $a$ will never be the largest of the smaller entries of the available pairs at any step of the tournament, and so $a$ and $b$ never face off against each other before the tournament ends.
   
  Now, orient each edge of $T$ towards $a$ (by considering its unique path to $a$).  This makes the set of unlabeled edges, which starts as the set of non-leaf edges, into an oriented rooted tree, rooted at the vertex $v_a$ adjacent to $a,b$.  After each round, some leaf of this subtree becomes labeled, and the remaining unlabeled edges again form a connected oriented tree rooted at $v_a$.  Inductively, we see that the labels advance forward along their paths towards $a$.
  
  Finally, let $\ell\neq a,b$ be a labeled leaf in $T$, let $e$ be the next edge after leaf edge $\ell$ along its path to $a$, and let $f$ be the third edge adjacent to edges $\ell$ and $e$.  Then eventually $f$ is labeled by some label $j$ by the process above, and then $\ell$ faces off against $j$ to label edge $e$.  Thus every label $\ell\neq a,b$ competes in some round.
\end{proof}

Our final result below is that the largest index $i$ for which $k_i=0$, which is used in the definition of $\widetilde{\mathbf{k}}_j$ (Definition \ref{def:ktilde-j}) and the asymmetric string equation, is the largest (and first) loser in the tournament.   Moreover, we are in the lazy case (or not) depending on whether $k_j=1$ or $k_j>1$.

\begin{lemma}[First Round Lemma]\label{lem:biggest-loser}
   Let $T\in \Tour(\mathbf{k})$ and consider the pair $(i_1,j)$ in the tournament of $T$ that faces off first, written so that $i_1<j$.  
\begin{enumerate}
    \item Let $i$ be the largest index for which $k_i=0$.  Then $i_1=i$.  
   \item The laziness rule applies in this round if and only if $k_j=1$ and $n\ge 2$.
\end{enumerate} 
\end{lemma}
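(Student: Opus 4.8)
The plan is to prove (1) first, then (2), using the structural lemmas already established: the Winners and Losers Lemmas (Lemmas~\ref{lem:winners-win} and~\ref{lem:losers-lose}), the fact that losers weakly decrease (Lemma~\ref{lem:losers-decrease}), and, above all, the Participation Lemma (Lemma~\ref{lem:numbers-used}), whose conclusion that labels ``advance forward along their paths towards $a$'' does most of the work once we root $T$ at the vertex $v_a$ carrying $a$ and $b$.

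For (1): since $i_1$ is the smaller element of round~$1$, it is a loser, hence by the Losers Lemma it never wins. I would then identify the set of labels that never win as $\{c\}\cup\{\ell\ge 1: k_\ell=0\}$: for $\ell\ge 1$ this is the Winners/Losers dichotomy combined with the Participation Lemma (every such $\ell$ competes), and $c$ is a non-winner because the $n$ rounds produce exactly $n$ wins, all accounted for by $\sum_\ell k_\ell=n$, so $c$ (like $a,b$) wins nothing. Since the loser sequence $i_1\ge i_2\ge\cdots\ge i_n$ is weakly decreasing and its distinct values are exactly the losing labels, $i_1$ is the largest non-winner; and this is precisely the index $i$ of Definition~\ref{def:ktilde-j} — an honest index $\ell\ge 1$ with $k_\ell=0$ in the generic case, and $c$ in the all-ones case (matching the stated convention). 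This part is essentially bookkeeping.

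The content is in (2). For the forward implication: if the laziness rule fires in round~$1$ then $j$ wins but does not advance, so its label occurs on no edge after round~$1$ (all three edges at the round-$1$ vertex $v$ are then labeled), whence $j$ competes exactly once and, since $j>i_1\ge c$ forces $j\ge 1$, we get $k_j=1$; moreover the witnessing edge $u\ne j$ must be a leaf edge (the only labeled edges in round~$1$) with $u>i_1\ge c$, so $a,b,i_1,j,u$ are five distinct leaves and $n\ge 2$. For the reverse implication I would argue by contradiction: suppose $k_j=1$, $n\ge 2$, and laziness does \emph{not} fire, so $j$ advances onto the internal edge $E$ leaving $v$ toward $a$, with other endpoint $w$. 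By the Winners Lemma, $j$ (a first-round winner with $k_j=1$) competes in no later round, so $E$ is never again a competitor. Since $n\ge 2$ we have $w\ne v_a$ (else $T$ would have only the four leaves $i_1,j,a,b$); rooting $T$ at $v_a$, let $P$ be the parent edge of $w$ and $C$ its other child edge. As $a$ and $b$ both lie beyond $P$, the edge $P$ is internal, with far endpoint $w'$. Now $P$ cannot receive its label from a face-off at $w'$ — that would advance a label onto $P$ heading away from $v_a$, impossible by the Participation Lemma — and the only way out, $w'=v_a$, would pit $a$ against $b$. Hence $P$ is labeled by the face-off at $w$, which is necessarily between the two other edges at $w$, namely $E$ and $C$; so $E$ competes after round~$1$, a contradiction.

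The main obstacle is the reverse direction of (2): one must correctly locate $j$'s label on the specific internal edge $E=vw$, use ``advance toward $a$'' to make $w$ the parent of $v$, and then force the next edge up, $P$, to be resolved by a face-off at $w$ that necessarily involves $E$. The delicate points are the boundary cases $w=v_a$ and $w'=v_a$, ruled out using $n\ge 2$ and the fact (Participation Lemma) that $a$ and $b$ never face off, together with the small observation that an edge retains its label once assigned, so that ``$E$ competes later'' really means ``$j$ competes later.'' Everything else reduces to the rooted-tree picture of label advancement.
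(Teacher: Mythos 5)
Your proof is correct and follows essentially the same route as the paper: part (1) via the Losers Lemma, the winners/losers dichotomy plus the Participation Lemma to identify the non-winners, and the weakly decreasing loser sequence; part (2) via ``laziness $\Rightarrow$ $j$ never advances $\Rightarrow k_j=1$'' and, conversely, a contradiction from $j$ competing (and hence winning) a second round. The only difference is one of detail: where the paper disposes of the converse with a one-line appeal to the Participation Lemma (``$j$ will compete in one more round unless $n=1$''), you spell out the rooted-at-$v_a$ argument showing the parent edge of $w$ must be labeled by a face-off at $w$ involving $E$, which is exactly the reasoning implicit in that citation.
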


\begin{proof}
  1. Since $i_1$ loses every round it competes in (by Lemma \ref{lem:losers-lose}), we have $k_{i_1}=0$ by the definition of $\Tour(\mathbf{k})$.  Furthermore, by Lemma \ref{lem:winners-win} and \ref{lem:numbers-used}, the only indices $i'$ with $k_{i'}=0$ are those that lose a round at some point.  By Lemma \ref{lem:losers-decrease}, these indices decrease as the tournament is run, so $i_1$ is the largest index for which $k_{i_1}=0$, and so $i_1=i$. 
  
  2. If the laziness rule applies, then $j$ does not advance, so evidently $k_j = 1$. Conversely, suppose $k_j = 1$ and (for contradiction) that $j$ advances. By the Winners Lemma (\ref{lem:winners-win}), $j$ wins every round in which it competes, and by the Participation Lemma (\ref{lem:numbers-used}), $j$ will compete in one more round unless $n=1$. This gives a contradiction unless $n=1$.
\end{proof}

\subsection{Proof of Theorem \ref{thm:tournaments}}

In order to prove Theorem \ref{thm:tournaments}, we will show that the quantities $|\Tour(k_1,\ldots,k_n)|$ satisfy the same recursion as the multidegrees (Proposition \ref{prop:recursion}).  That is, we wish to show that for all $n\ge 1$ and any composition $\mathbf{k}=(k_1,\ldots,k_n)$ of $n$ where $k_i$ is the rightmost $0$ in $\mathbf{k}$, $$|\Tour(\mathbf{k})|=\sum_{j>i} |\Tour(\widetilde{\mathbf{k}}_j)|.$$
To do so, we use the tournament process to define a map $$\forget:\Tour(\mathbf{k})\to \coprod_{j>i} \Tour(\widetilde{\mathbf{k}}_j),$$ and we show $\forget$ is a bijection. Here, $\coprod$ is coproduct (formal disjoint union) of sets, since the sets $\Tour(\widetilde{\mathbf{k}}_j)$ may not be pairwise disjoint.

\begin{definition}
  Define $\forget:\Tour(\mathbf{k})\to \coprod_{j>i} \Tour(\widetilde{\mathbf{k}}_j)$ as follows.  Let $T\in \Tour(\mathbf{k})$ and consider the pair $(i,j)$ in the tournament of $T$ that faces off first, with $j>i$. Let $v$ be the vertex adjacent to $i$ and $j$.
  \begin{itemize}
      \item If $k_j>1$, then define $\forget(T)$ to be the tree formed by labeling $v$ by $j$, removing the leaves and leaf edges of $i$ and $j$, and decreasing all of the labels $i+1,i+2,\ldots,n$ by $1$. 

      \item If $k_j=1$, then define $\forget(T)$ to be the tree formed by labeling $v$ by $i$, removing the leaves and leaf edges of $i$ and $j$, and decreasing all of the labels $j+1,j+2,\ldots,n$ by $1$. 
  \end{itemize}
By Lemma~\ref{lem:biggest-loser}(2), the two cases above correspond to whether or not the laziness rule applies to the first round of the tournament. Below, in Lemma~\ref{lem:map}, we verify that this gives $\forget(T) \in \Tour(\widetilde{\mathbf{k}}_j)$, i.e. the composition associated to $\forget(T)$ is $\widetilde{\mathbf{k}}_j$. An example of $\forget$ is shown in Figure~\ref{fig:F}.
\end{definition}

\begin{remark}[Forgetting maps]\label{rmk:forget-and-forgive}
  Geometrically, $\forget(T)$ is given by applying the forgetting map $\pi_i : \Mbar_{0, X} \to \Mbar_{0, X\setminus i},$ followed by one of two different relabelings of the marked points, as follows. In the non-lazy case (when $j$ advances), after applying $\pi_i$ we simply decrease each of the labels $i+1,\ldots,n$ by one. In the lazy case (when $i$ advances), after applying $\pi_i$ we relabel $j$ as $i$ and then decrease each of the labels $j+1,\ldots,n$ by one. The same relabelings were used in the geometric proof of the asymmetric string equation in \cite{CGM} (restated here as Proposition \ref{prop:recursion}).
  \end{remark}
  
\begin{example} \label{ex:forget} Continuing Example \ref{ex:stringex}, the following recursion holds:
\begin{equation}
\Tour(1,0,0,0,2,1,3) \xrightarrow{\forget} \Tour(1,0,0,1,1,3)   \amalg \Tour(1,0,0,0,2,3) \amalg \Tour(1,0,0,2,1,2).
\end{equation}
Geometrically, $\forget$ maps the corresponding points of $\Mbar_{0,n+3}$ to $\Mbar_{0,n+2}$ as in the diagram
\begin{equation}
\raisebox{4em}{\xymatrix{
&& \Mbar_{0,abc123456} \ar@{}[r]|-*[@]{\supset} & \Tour(1,0,0,1,1,3) \\
\Mbar_{0, abc1234567} \ar[r]^{\pi_4} & \Mbar_{0,abc123567}
\ar@{=}[ru]^{\cong}
\ar@{=}[r]^{(\cong)'}
\ar@{=}[rd]^{\cong} &
\Mbar_{0,abc123456}
\ar@{}[r]|-*[@]{\supset} & \Tour(1,0,0,0,2,3) \\
\Tour(1,0,0,0,2,1,3)
\ar@{}[u]|-*[@]{\cup}
&& \Mbar_{0,abc123456}
\ar@{}[r]|-*[@]{\supset} & \Tour(1,0,0,2,1,2).
}}
\end{equation}
Here, the relabelings indicated by $\cong$ are \emph{non-lazy} (decrementing $5$, $6$ and $7$ by $1$) and the middle relabeling $(\cong)'$ is \emph{lazy}: it is given by first sending $6 \mapsto 4$, then decrementing only $7 \mapsto 6$. The map $\forget$ corresponds to these maps, applied to the appropriate subsets of $\Tour(1, 0, 0, 0, 2, 1, 3)$. Note that these are the same maps as in the asymmetric string equation.
\end{example}

\begin{figure}
    \centering
    \includegraphics{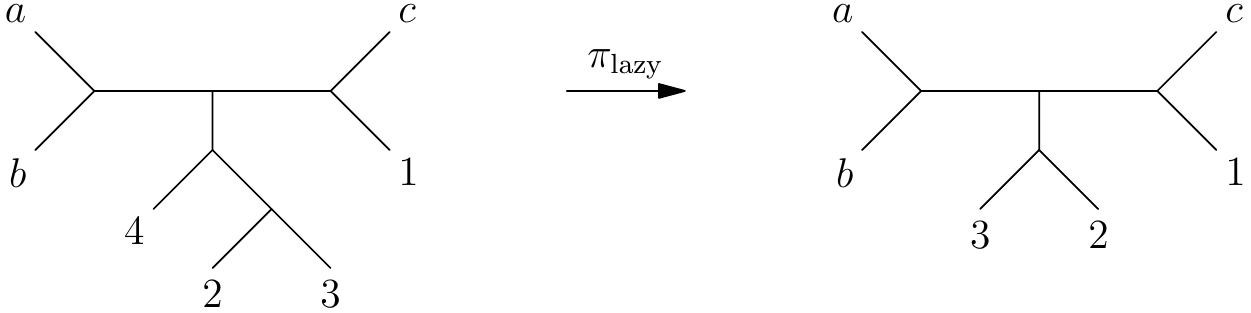}
    \caption{The map $\forget$ applied to the tree from Example \ref{ex:tournament}.  Notice that applying $\forget$ corresponds to running the first round of the tournament, deleting the two leaf edges that competed, and decrementing the higher label $4$.}
    \label{fig:F}
\end{figure}

\begin{lemma}\label{lem:map}
  The map $\forget$ is a well-defined map to $\coprod_{j>i} \Tour(\widetilde{\mathbf{k}}_j)$; in particular, if $T\in \Tour(\mathbf{k})$ and $j$ is the winner of the first round of the tournament of $T$, then $\forget(T)\in \Tour(\widetilde{\mathbf{k}}_j)$.
\end{lemma}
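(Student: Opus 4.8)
The plan is to track, step by step, how the lazy tournament of $\forget(T)$ relates to that of $T$, and show the win-counts transform exactly according to the Tilde Construction. First I would note that by the First Round Lemma (Lemma~\ref{lem:biggest-loser}(1)), the first pair to face off in $T$ is $(i, j)$ where $i$ is the rightmost index with $k_i = 0$, and by part (2) the laziness rule applies in this round if and only if $k_j = 1$ (since $n \geq 1$; the edge case $n = 1$ should be checked separately, where $\mathbf{k} = (0)$ forces $i = c$ and $j \in \{1\}$... but actually $k_1 = 0$ there too, so one must be slightly careful and perhaps handle $n=1$ as a trivial base case). So the two cases in the definition of $\forget$ are exactly the lazy and non-lazy cases.

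Next I would argue that $\forget(T)$ is again a trivalent tree with $a, b$ adjacent: removing the two leaf edges $i$ and $j$ at the vertex $v$, and labeling $v$ by the winner, turns the degree-$3$ vertex $v$ into a leaf vertex, while every other vertex retains its degree; since $v \neq v_a$ (as $i, j \neq a, b$ by the Participation Lemma), the vertex $v_a$ still has $a, b$ adjacent. The relabeling (decrementing $\{i+1,\ldots,n\}$ in the non-lazy case, or $j \mapsto i$ then decrementing $\{j+1,\ldots,n\}$ in the lazy case) is a bijection onto $\{a,b,c,1,\ldots,n-1\}$ preserving the order $a < b < c < 1 < \cdots$, since in the lazy case $k_j = 1$ and $i$ is the rightmost zero means there are no zeros strictly between $i$ and $j$ among indices $\geq 1$, so $j \mapsto i$ and then shifting is order-preserving on the remaining labels. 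The key observation is that the relabeling is an order isomorphism of label sets, so it commutes with the entire tournament procedure.

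The heart of the argument is the claim that the tournament of $\forget(T)$, run from the start, agrees with the tournament of $T$ run from round $2$ onward, after applying the relabeling. In the non-lazy case: after round $1$ of $T$'s tournament, edge $E$ (the third edge at $v$) is labeled $j$, and the remaining unlabeled edges of $T$ are exactly the unlabeled edges of $\forget(T)$ with $E$ now playing the role of the leaf edge for the new leaf vertex $v$ (whose label is $j$, which after decrementing $\{i+1,\ldots,n\}$ matches the picture). Since the relabeling is order-preserving and only shifts labels above $i$, the choice of which pair faces off next (the one with the largest smaller entry) and the outcomes (winner/loser, and whether the laziness rule fires, which depends only on order comparisons among adjacent labels) are identical in both tournaments. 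So each label $\ell \geq 1$, $\ell \neq i, j$, wins the same number of rounds in $\forget(T)$ as in $T$ — except that the label originally called $j$ now absorbs the win count of the "merged" entry $v = j$: it wins $k_j - 1$ times in $\forget(T)$ (it does not win round $1$ again, that win is "used up"), wait — more carefully, $j$ wins round $1$ in $T$ and then wins $k_j - 1$ more times; in $\forget(T)$ it starts at $v$ without that first win, so wins $k_j - 1$ times. Meanwhile $i$ wins $0$ times in both. So the win-count composition of $\forget(T)$, read off in the relabeled indexing, is obtained from $\mathbf{k}$ by decreasing $k_j$ by $1$ and deleting the entry $k_i = 0$ — which is precisely $\widetilde{\mathbf{k}}_j$ when we delete the $i$-th entry. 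In the lazy case ($k_j = 1$): $j$ wins round $1$ then never competes again (Winners/Losers and Participation Lemmas: it would have to compete again and win, but it's gone), so $j$ is deleted from the tree; $i$ advances to $E$ and, by the Losers Lemma, loses every subsequent round, so $i$ wins $0 = k_i$ times; after relabeling $j \mapsto i$, the merged entry carries $k_i = 0$ wins — no wait, we relabel $j \mapsto i$ meaning the surviving entry is indexed $i$ and it wins $0$ times... hmm, but we also deleted $k_j - 1 = 0$, so the net effect on the composition is: decrease $k_j$ by $1$ (making it $0$), delete the rightmost of the two resulting zeros (the $j$-th entry), keeping the $i$-th entry at $0$ — which is exactly $\widetilde{\mathbf{k}}_j$ by Definition~\ref{def:ktilde-j}. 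All other entries are merely shifted.

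The main obstacle I anticipate is the careful bookkeeping in the lazy case: one must verify both that the surviving tournament on $\forget(T)$ really does proceed identically (the label $i$ now sitting on edge $E$ has the same order-relationships with its neighbors as it did in $T$ after round $1$, so the laziness rule fires in $\forget(T)$'s subsequent rounds exactly when it fired in $T$'s), and that the index arithmetic — "remove the rightmost $0$, which is the $i$-th or $j$-th entry" — matches up with "relabel $j \mapsto i$, decrement above $j$". The cleanest way to handle this is to observe that since $i$ is the rightmost zero of $\mathbf{k}$ and $k_j \geq 1$, decreasing $k_j$ by one and then deleting a zero entry is unambiguous, and to check the two subcases ($k_j = 1$ versus $k_j > 1$) produce the two relabelings in the definition of $\forget$. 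A secondary subtlety is confirming $\forget(T)$ has the property that $a, b$ share a vertex, which as noted follows because $v \neq v_a$; and that $\forget(T)$ is genuinely trivalent (not merely at-least-trivalent), i.e. no degree-$2$ vertices are created — which holds since we delete a whole "cherry" (two leaf edges and their common vertex's extra edge is re-used), turning $v$ from degree $3$ into degree $1$.
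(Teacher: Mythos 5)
Your proof is correct and takes essentially the same approach as the paper: identify the first round and the lazy/non-lazy dichotomy via Lemma~\ref{lem:biggest-loser}, observe that the tournament of $\forget(T)$ is the continuation of $T$'s tournament from round $2$ onward under the order-preserving relabeling, and read off that the win-count composition is $\widetilde{\mathbf{k}}_j$ in each of the two cases (your additional checks of trivalence and $a,b$-adjacency are fine, just more explicit than the paper). The only slip is the parenthetical about $n=1$ (the unique composition there is $\mathbf{k}=(1)$, not $(0)$), which is harmless since you handle $n=1$ as a separate base case exactly as the paper does.
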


\begin{proof}
  If $n=1$, the only composition is $\mathbf{k}=(1)$, and $\Tour(1)$ consists of the unique trivalent tree $T_1$ with leaf edges $a,b$ sharing one vertex and $1,c$ sharing another.  Then $\forget(T_1)$ is the unique tree $T_0\in \Tour(\emptyset)$ in which $a,b,c$ are leaf edges connected to a single internal vertex. 
 
  Now suppose $n>1$.  Let $T\in \Tour(\mathbf{k})$, and let $j$ be the winner of the first round, which faces off against $i<j$. By Lemma~\ref{lem:biggest-loser}(1), $i$ is the largest index such that $k_i = 0$.  Furthermore, by Lemma~\ref{lem:biggest-loser}(2), the laziness rule applies and $i$ advances if and only if $k_j=1$ (since $n>1$).
  
  Let $\mathbf{k}'$ be the composition associated to $\forget(T)$'s tournament, that is, $k'_\ell$ is the number of rounds of the tournament of $\forget(T)$ that $\ell$ wins. We wish to show $\forget(T) \in \Tour(\widetilde{\mathbf{k}}_j)$, that is, $\mathbf{k}' = \widetilde{\mathbf{k}}_j$.
  
  \textbf{Case 1.} Suppose $k_j>1$. Then $\forget(T)$ is formed by removing the edge labeled $i$ from $T$ and decreasing the labels $i+1,i+2,\ldots,n$ by $1$.
  Since $\forget(T)$ can be thought of as the tree that remains after letting $j$ advance in the tournament (and renumbering), we have $k'_{j-1}=k_j-1$, $k_\ell'=k_\ell$ for $\ell<i$, and $k_\ell'=k_{\ell+1}$ for all other $\ell$ ($\ell\ge i$, $\ell\neq j-1$).  Thus $\mathbf{k}'=\widetilde{\mathbf{k}}_j$ as desired.
  
  \textbf{Case 2.} Suppose $k_j=1$.  Then $\forget(T)$ is formed by removing the edge labeled $j$ from $T$ and decreasing the labels $j+1,j+2,j+3,\ldots,n$ by $1$.  
  Since $\forget(T)$ can be thought of as the tree that remains after letting $i$ advance in the tournament via the laziness rule (and renumbering), we see that for all $\ell<j$ we have $k_\ell'=k_\ell$, and for all $\ell\ge j$ we have $k_\ell'=k_{\ell+1}$. Thus $\mathbf{k}'=\widetilde{\mathbf{k}}_j$.
\end{proof}

\begin{prop}\label{prop:bijection}
  The map $\forget$ is a bijection.
\end{prop}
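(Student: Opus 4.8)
The plan is to prove that $\forget$ is a bijection by constructing an explicit inverse map $\grow : \coprod_{j>i}\Tour(\widetilde{\mathbf{k}}_j) \to \Tour(\mathbf{k})$ that ``undoes'' the first round of the tournament. Given a tree $S \in \Tour(\widetilde{\mathbf{k}}_j)$ for a particular $j > i$ (where $i$ is the rightmost $0$ of $\mathbf{k}$), we need to attach a new pair of leaf edges at an appropriate vertex of $S$ and relabel so that the resulting tree lies in $\Tour(\mathbf{k})$ and maps back to $S$. Concretely: in the non-lazy case ($k_j > 1$), the composition $\widetilde{\mathbf{k}}_j$ was obtained by decrementing $k_j$ and deleting the $0$ in position $i$; so in $S$, the winner-to-be sits at the label currently called $j-1$. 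We first increment all labels $\ell \geq i$ of $S$ by $1$ (creating a ``gap'' at position $i$), locate the leaf now labeled $j$, subdivide its leaf edge to create a new internal vertex $v$, and attach two new leaf edges to $v$ labeled $i$ and $j$ — wait, $j$ is already used, so more carefully: we attach new leaves so that after relabeling, the pair $(i,j)$ is precisely the first pair to face off. In the lazy case ($k_j = 1$), $\widetilde{\mathbf{k}}_j$ was obtained by decrementing $k_j$ to $0$ and deleting that new $0$; so in $S$ the relevant label is one of the existing labels, and we reverse the lazy relabeling ($\ell \mapsto \ell+1$ for $\ell \geq j$, then the old $j$-slot is reinserted). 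In both cases we then re-attach the deleted leaf edges $i$ and $j$ at the vertex $v$ that was labeled in $\forget$.

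The key technical steps, in order, are as follows. First I would show $\grow(S)$ is a well-defined trivalent tree with the correct leaf set $\{a,b,c,1,\ldots,n\}$ and with $a,b$ sharing a vertex (this is immediate since $S$ has this property and we only modify edges away from the $ab$-vertex — one should check that the pair $(i,j)$ we create really does satisfy the "largest smaller entry among pairs sharing a vertex with an unlabeled third edge" condition, so that it genuinely faces off first). Second, I would verify that the first round of the tournament on $\grow(S)$ reproduces exactly the match $(i,j)$ with the correct winner and the correct advancing label — in the non-lazy case one checks the laziness rule does not fire (because $k_j > 1$ forces, via the Winners Lemma and Participation Lemma, that $j$ must win again, so no edge $u > i$ is adjacent to $E$... actually the cleanest route is: we build $\grow(S)$ so that $E$ is adjacent to the old leaf edge of $S$ which after relabeling is $\geq$ or $<$ $i$ as needed). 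Third, having matched up the first rounds, the remaining rounds of the tournament on $\grow(S)$ are literally the tournament on $S$ (with the relabeling), by the local nature of the tournament process, so the win-counts of $\grow(S)$ recover $\mathbf{k}$, giving $\grow(S) \in \Tour(\mathbf{k})$. Finally, $\forget \circ \grow = \mathrm{id}$ and $\grow \circ \forget = \mathrm{id}$ follow by unwinding the definitions: $\forget$ deletes exactly the two leaf edges and does exactly the relabeling that $\grow$ inserted, and conversely.

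The main obstacle I anticipate is bookkeeping the relabelings carefully enough that the two cases (lazy vs.\ non-lazy) are handled uniformly and that the inverse is genuinely well-defined on the \emph{coproduct} — one must check that, given an abstract tree $S$ together with the datum of which $j$ it "came from", $\grow$ produces a tree in $\Tour(\mathbf{k})$ whose first-round winner is that same $j$, so that $\forget$ sends it back to the correct component. There is a subtlety that the sets $\Tour(\widetilde{\mathbf{k}}_j)$ need not be disjoint, so $\grow$ really is a map out of the coproduct and the "$j$-label" is part of the input; the fact that the first-round winner of $\grow(S)$ is recoverable (it is the unique label that wins round one, determined by the tree structure via Lemma~\ref{lem:biggest-loser}) is what makes $\forget$ well-defined as the inverse. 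The other place requiring care is the degenerate base case and the boundary behavior when $i$ or $j$ is close to $n$ (e.g.\ $j = n$, or $\widetilde{\mathbf{k}}_j$ has trailing zeros), but these are handled by the same index arithmetic already set up in Definition~\ref{def:ktilde-j} and Lemma~\ref{lem:map}. Once the inverse is written down, verifying $\forget\circ\grow = \grow\circ\forget = \mathrm{id}$ is routine.

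Alternatively — and this may be cleaner to write — rather than constructing $\grow$ from scratch, I would prove bijectivity by exhibiting, for each $j > i$, an explicit inverse to the \emph{restriction} $\forget|_{\forget^{-1}(\Tour(\widetilde{\mathbf{k}}_j))} : \forget^{-1}(\Tour(\widetilde{\mathbf{k}}_j)) \to \Tour(\widetilde{\mathbf{k}}_j)$, and separately checking that as $j$ ranges over all indices $> i$, the preimages $\forget^{-1}(\Tour(\widetilde{\mathbf{k}}_j))$ partition $\Tour(\mathbf{k})$ (which follows because every $T \in \Tour(\mathbf{k})$ has a unique first-round winner $j$, again by Lemma~\ref{lem:biggest-loser} and the Participation Lemma). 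This reduces the problem to a family of bijections between honest sets, sidestepping any worry about the coproduct, and the construction of each inverse is exactly the $\grow$ described above restricted to a single $j$.
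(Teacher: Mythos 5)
Your overall strategy is the same as the paper's: the paper also proves bijectivity by writing down an explicit inverse componentwise on $\coprod_{j>i}\Tour(\widetilde{\mathbf{k}}_j)$, split into the non-lazy case $k_j>1$ (shift the labels $\ge i$ up by one, subdivide the distinguished leaf edge to recreate the vertex $v$, reattach the missing leaf) and the lazy case $k_j=1$ (shift the labels $\ge j$ up by one and reattach the leaf $j$), and your remarks about the coproduct bookkeeping and about the later rounds of the reconstructed tree agreeing with the tournament of $S$ are exactly what the paper uses (compare Lemma~\ref{lem:map}).

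However, there is a genuine gap at the one step that actually needs an argument: verifying that in the reconstructed tree $T$ the pair $(i,j)$ really is the first pair to face off and that the laziness rule fires precisely when $k_j=1$, so that $\forget(T)=T'$ lands in the intended component. Your proposed justification in the non-lazy case --- that ``$k_j>1$ forces, via the Winners and Participation Lemmas, that $j$ must win again, so no edge $u>i$ is adjacent to $E$'' --- is circular: those lemmas would be applied to the tournament of $T$, whose win profile being $\mathbf{k}$ is exactly what is under proof; and the fallback of ``building $\grow(S)$ so that $E$ is adjacent to a label $<i$ as needed'' is not available, since the labels neighboring $E$ are dictated by $S$, not chosen in the construction. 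The missing idea, supplied in the paper, is to read the required facts off from Definition~\ref{def:ktilde-j} together with Lemmas~\ref{lem:losers-decrease} and~\ref{lem:biggest-loser}: when $k_j>1$, the rightmost zero of $\widetilde{\mathbf{k}}_j$ lies strictly before $i$, so every loser in the tournament of $T'$ is $<i$; this simultaneously makes $(i,j)$ the first pair to face off in $T$ and rules out a labeled neighbor $u>i$ of $E$ (such a $u$ would give a pair $(u-1,j-1)$ facing off in $T'$ and hence a loser $\ge i$), so $j$ advances. When $k_j=1$, the entry $(\widetilde{\mathbf{k}}_j)_i=0$ is the rightmost zero of $\widetilde{\mathbf{k}}_j$, so $i$ loses the first round of $T'$ against some $u>i$, and that neighbor $u>i$ is exactly what makes the laziness rule apply to $(i,j)$ in $T$. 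Without an argument of this kind, the identities $\forget\circ\grow=\mathrm{id}$ and $\grow\circ\forget=\mathrm{id}$ are asserted rather than proved.
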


\begin{proof}
  We construct the inverse of $\forget$.  Given an element $T'\in \Tour(\widetilde{\mathbf{k}}_j)$ for some $j>i$, we construct the unique $T\in \Tour(\mathbf{k})$ such that $\forget(T)=T'$ as follows.  
 
 \textbf{Case 1.} If $k_j> 1$, define $T$ by increasing the labels $i,i+1,i+2,\ldots,n$ by $1$ each in $T'$, splitting the leaf edge of $i$ into two edges with middle vertex $v$, and then attaching a leaf edge labeled $i$ to $v$.  By the definition of $\widetilde{\mathbf{k}}_j$, the rightmost $0$ in $\widetilde{\mathbf{k}}_j$ occurs strictly before $i$, so all losers in $T'$ are less than $i$.  Thus the pair $(i,j)$ is the first to face off in $T$.  Moreover, if there is a labeled edge $u$ adjacent to the empty edge connected to $(i,j)$ in $T$, assume for contradiction that $u>i$.  Then $(u-1,j-1)$ was a pair of leaves that faced off in $T'$, so $u-1<i$ and so $u\le i$, a contradiction.  Hence in $T$, $j$ advances after defeating $i$, and therefore $T\to T'$ under our map.
 
 \textbf{Case 2.} If $k_j=1$, define $T$ by first increasing the labels $j,j+1,\ldots,n$  by $1$ each in $T'$, splitting the leaf edge of $j$ into two edges with middle vertex $v$, and then attaching a leaf edge labeled $j$ to $v$.  Note that since $k_j=1$ we have $(\widetilde{\mathbf{k}}_j)_i=0$ by the definition of $\widetilde{\mathbf{k}}_j$, and moreover $i$ is the index of the last $0$ in $\widetilde{\mathbf{k}}_j$.  Thus $i$ was the loser of the first match in $T'$, meaning it was paired with a larger entry $u>i$ in the first round of $T'$.
 Thus in $T$, the laziness rule applies in the match between $i$ and $j$, and $j$ sends $i$ along to face off against $u$.   It follows that our map above sends $T\to T'$ as desired.
\end{proof}

\begin{proof}[Proof of Theorem~\ref{thm:tournaments}]
By Proposition~\ref{prop:bijection}, the quantities $\deg_{\mathbf{k}}(\Phi_n)$ and $|\Tour(\mathbf{k})|$ satisfy the same recursion. It is easy to check that $\deg_{1}(\Phi_1) = |\Tour(1)| = 1$. Therefore, by induction on $n$ we have that $\deg_{\mathbf{k}}(\Phi_n) = |\Tour(\mathbf{k})|$.
\end{proof}

Finally, we can use Theorem \ref{thm:tournaments} to give an elegant proof of Corollary \ref{cor:total-degree}, which we restate for convenience below.

\begin{TotalCor}
  The total degree of $\emb_n$ is $$\deg(\emb_n)=(2n-1)!!=(2n-1)\cdot (2n-3)\cdot \cdots \cdot 5 \cdot 3 \cdot 1.$$
\end{TotalCor}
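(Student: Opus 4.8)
The plan is to deduce this purely formally from Theorem~\ref{thm:tournaments} together with the combinatorial results of this section. Summing the identity $\deg_{\mathbf{k}}(\emb_n) = |\Tour(\mathbf{k})|$ over all weak compositions $\mathbf{k}$ of $n$ gives
\[
\deg(\emb_n) \;=\; \sum_{k_1+\cdots+k_n=n} |\Tour(\mathbf{k})| \;=\; \Big| \coprod_{\mathbf{k}} \Tour(\mathbf{k}) \Big| \;=\; |\mathcal{S}|,
\]
where $\mathcal{S}$ is the set of all trivalent trees on the leaf set $\{a,b,c,1,\ldots,n\}$ whose $a$ and $b$ leaf edges share a vertex; the last equality uses the fact (recorded just before the statement of the corollary, and below in this section) that the sets $\Tour(\mathbf{k})$ partition $\mathcal{S}$. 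Thus everything reduces to counting $\mathcal{S}$.

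To justify that the $\Tour(\mathbf{k})$ genuinely exhaust $\mathcal{S}$, I would run the tournament on an arbitrary $T \in \mathcal{S}$. It has exactly $n$ rounds, and by the Participation Lemma~(\ref{lem:numbers-used}) every label other than $a,b$ competes in at least one round, while by the Winners and Losers Lemmas (\ref{lem:winners-win}, \ref{lem:losers-lose}) each competing label is either always a winner or always a loser. If $c$ won any round it would win at least one, leaving the labels $1,\ldots,n$ to account for strictly fewer than $n$ of the $n$ rounds — impossible. Hence $c$ always loses, the win counts $k_i$ of the labels $1,\ldots,n$ sum to $n$, and $T \in \Tour(k_1,\ldots,k_n)$; disjointness of the $\Tour(\mathbf{k})$ is immediate. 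This bookkeeping step is the only place any care is needed, and the lemmas above handle it completely.

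It remains to compute $|\mathcal{S}| = (2n-1)!!$. Each $T \in \mathcal{S}$ has a unique internal vertex $v_a$ carrying the leaf edges $a$ and $b$; deleting those two leaves and leaf edges turns $v_a$ into a leaf and produces a trivalent tree on the $n+2$ leaves $\{v_a, c, 1, \ldots, n\}$, and this operation is reversible. (Equivalently, $\mathcal{S}$ is the set of boundary points of the divisor $\delta_{a,b} \cong \Mbar_{0,\{b,c,1,\ldots,n\}}$.) So $|\mathcal{S}|$ is the number of trivalent trees on $n+2$ labeled leaves. By the standard ``attach a new leaf on an edge'' recursion — a trivalent tree on $m-1$ leaves has $2m-5$ edges, and subdividing one of them to attach leaf $m$ realizes every trivalent tree on $m$ leaves exactly once — this count is $(2m-5)!!$, which for $m = n+2$ equals $(2n-1)!!$. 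Assembling the displayed chain of equalities completes the argument. I do not anticipate any real obstacle: the content is entirely in Theorem~\ref{thm:tournaments}, and the present statement is a counting corollary.
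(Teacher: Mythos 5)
Your proposal is correct and follows essentially the same route as the paper: sum Theorem \ref{thm:tournaments} over all weak compositions, use the fact that the sets $\Tour(\mathbf{k})$ partition the trees with $a,b$ adjacent, and identify that set with the trivalent trees on $n+2$ labeled leaves (the paper does this via the forgetting map $\pi_b$, you delete both $a$ and $b$ and promote their common vertex to a leaf --- the same bijection up to renaming), counted by $(2n-1)!!$ exactly as in the paper's concluding step and remark. The only wobble is your justification that the $\Tour(\mathbf{k})$ exhaust all such trees: the claim that it is ``impossible'' for $1,\dots,n$ to win fewer than $n$ rounds is circular as phrased, since nothing a priori forces those labels to account for every round; the clean reason is that by the Participation Lemma (\ref{lem:numbers-used}) $a$ and $b$ never compete, so every opponent of $c$ is larger and the larger label always wins, hence $c$ loses every round and the win counts of $1,\dots,n$ automatically sum to $n$.
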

 
\begin{proof}
  Notice that $\bigcup_{\mathbf{k}}\Tour(\mathbf{k})$ is the set $\Tour(n)$ of all trivalent trees with leaves labeled by $a,b,c,1,2,\ldots,n$ in which the leaf edges of $a$ and $b$ share a vertex. Since the sets $\Tour(\mathbf{k})$ are all disjoint as $\mathbf{k}$ varies, we have that the total degree is $|\Tour(n)|$, so we wish to show $|\Tour(n)|=(2n-1)!!$.
  
  Notice that the forgetting map $\pi_b$ maps $\Tour(n)$ bijectively onto the set of all boundary points on $\Mbar_{0,\{a,c,1,2,\ldots,n\}}\cong \Mbar_{0,n+2}$, i.e. all trivalent trees on $n+2$ labels.  It is well known that there are $(2(n+2)-5)!!=(2n-1)!!$ such trees, and the result follows.
\end{proof}

\begin{remark}
One can also prove that $|\Tour(n)|=(2n-1)!!$ via a direct inductive proof as follows. Since $|\Tour(0)|=1$, it suffices to give a $(2n-1)$-to-one map from $\Tour(n)$ to $\Tour(n-1)$.  This is precisely given applying the forgetting map $\pi_n$ to each of these points. Indeed, forgetting the $n$ results in an element of $\Tour(n-1)$, and given a tree in $\Tour(n-1)$, there are $2n-1$ ways to attach a new leaf edge labeled $n$ to form a tree in $\Tour(n)$ in its preimage: namely, either attached to the leaf labeled by one of the $n$ marked points $c,1,2,\ldots,n-1$, or attached to one of the $n-1$ non-leaf edges in the tree.  
\end{remark}

\section{Parking functions} \label{sec:PFs}

In \cite{CGM}, the multidegrees $\deg_{\bf k}(\emb_n)$ were shown to enumerate a type of parking function called a \textit{column restricted parking function} ($\CPF$). In this section we give a bijection between $\CPF$s and lazy tournaments.

We first recall the general definition of a parking function, as in \cite{HaglundBook}, though we draw our Dyck paths using up and left steps rather than up and right, as seen in, for instance, \cite{Blasiaketc}.  This will be useful for avoiding the extra step of reversing the sequence $\mathbf{k}$ (as in \cite{CGM}).

\begin{definition}
A \textbf{Dyck path} of height $n$ is a lattice path in the plane from $(0,n)$ to $(n,0)$, using right $(1,0)$ and down $(0,-1)$ unit steps, that stays weakly above the diagonal line connecting the two endpoints. 
\end{definition}

\begin{definition}
A \textbf{parking function} is a way of labeling the unit squares just to the left of the downward steps of a Dyck path with the numbers $1,2,\ldots,n$ such that the numbers in each column are increasing up the column. For $1\leq i\leq n$, the \textbf{$i$th column} of a parking function is the $i$th column of squares from the left in the $n\times n$ box that contains it. We write $\PF(n)$ for the set of parking functions of size $n$.
\end{definition}

Two examples of parking functions of height $4$ are shown in Figure \ref{fig:PF}.

\begin{figure}
    \centering
    \includegraphics{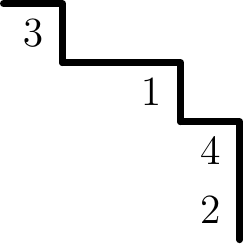}\hspace{2cm}\includegraphics{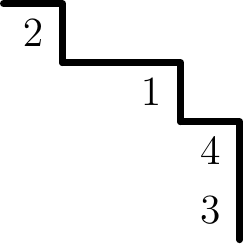}\hspace{2cm}\includegraphics{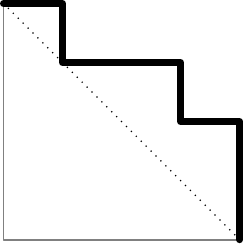}
    \caption{\label{fig:PF}At left, a column-restricted parking function in the set $\CPF(1,0,1,2)$ since the number of labels in each column from left to right are $1,0,1,2$.  At middle, a parking function which is not column-restricted, since $d_2=2$.  At right, their Dyck path, shown staying above the diagonal from $(0,4)$ to $(4,0)$.} 
\end{figure}

\subsection{Column-restricted parking functions}\label{sec:CPFs}

\begin{definition}\label{def:column-restricted}
Let $P$ be a parking function.  For every number label $x$ in $P$, we say $x$ \defn{dominates} a column to its right if the column contains no entry greater than $x$. Define the \defn{dominance index} $d_x$ to be the number of columns to the right of $x$ dominated by $x$ (including empty columns).  Then we say $P$ is \textbf{column-restricted} if $x>d_x$ for all $x=1,2,3,\ldots,n$.  We write $\CPF(k_1,\ldots,k_n)$ for the set of column-restricted parking functions having exactly $k_i$ labels in the $i$-th column for all $i$.
\end{definition}

\begin{thm}[Theorem 1.1 in \cite{CGM}]\label{thm:CPFs}
 We have $$\deg_{(k_1,\ldots,k_n)}(\emb_n)=|\CPF(k_1,\ldots,k_n)|.$$ 
\end{thm}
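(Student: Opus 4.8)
The statement to prove is Theorem~\ref{thm:CPFs}, namely $\deg_{(k_1,\ldots,k_n)}(\emb_n) = |\CPF(k_1,\ldots,k_n)|$. Since we have already established Theorem~\ref{thm:tournaments}, which identifies $\deg_{\mathbf{k}}(\emb_n)$ with $|\Tour(\mathbf{k})|$, the plan is to bypass the original argument of \cite{CGM} entirely and instead construct an explicit bijection
\[
\beta : \Tour(k_1,\ldots,k_n) \xrightarrow{\ \sim\ } \CPF(k_1,\ldots,k_n)
\]
for every weak composition $\mathbf{k}$ of $n$. Combining such a bijection with Theorem~\ref{thm:tournaments} immediately yields $\deg_{(k_1,\ldots,k_n)}(\emb_n) = |\Tour(\mathbf{k})| = |\CPF(k_1,\ldots,k_n)|$, which is the desired equality. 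This reduces the problem to a purely combinatorial construction relating trivalent trees (with $a,b$ adjacent) and column-restricted parking functions.

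\textbf{Construction of the map.} First I would read off, from the lazy tournament of a tree $T \in \Tour(\mathbf{k})$, the round-by-round data: for each round $m = 1,\ldots,n$ the pair $(i_m, j_m)$ that faces off, where $j_m$ is the winner. By the Participation Lemma (\ref{lem:numbers-used}) and the Winners/Losers Lemmas (\ref{lem:winners-win}, \ref{lem:losers-lose}), every label $\ell \in \{c,1,\ldots,n\}\setminus\{a,b\}$ is either a permanent winner or a permanent loser, and each winner $\ell$ wins exactly $k_\ell$ rounds; meanwhile $a,b$ never compete. The idea is to let the placement of label $\ell$ in the parking function be governed by which column it ``wins into'': when $\ell$ wins a round by defeating some $i$, I assign a box in a column indexed by $\ell$ (so $\ell$ contributes $k_\ell$ boxes to column $\ell$, matching the column-height condition), and the row within that column is determined by the order in which $\ell$'s wins occur during the tournament (earlier wins going higher, to respect the increasing-up-the-column requirement). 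The shape of the underlying Dyck path is then forced by the column heights $k_1,\ldots,k_n$ together with the constraint that the path stays weakly above the diagonal; I would need to check that the tournament structure guarantees the partial sums $k_1 + \cdots + k_m$ never fall below what the Dyck condition requires — this should follow because at any stage of the tournament only finitely many labels have ``finished'' and the surviving structure is a rooted subtree (again by the Participation Lemma). The inverse map would reconstruct the tree by replaying the columns of the parking function from right to left, at each step attaching the appropriate leaf edge, mimicking the inverse of $\forget$ from Proposition~\ref{prop:bijection}.

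\textbf{Verifying the column-restricted condition.} The crucial compatibility to check is that $\beta(T)$ is not merely a parking function but a \emph{column-restricted} one, i.e.\ that the dominance index $d_x$ of each label $x$ satisfies $x > d_x$. This is where I expect the tournament combinatorics to do real work: the dominance index of $x$ counts columns to its right containing no entry exceeding $x$, and I would argue this corresponds exactly to the number of rounds that $x$ survives as a ``live loser'' carried forward by the laziness rule before being eliminated — equivalently, how far $x$ must travel along its path toward $a$. Lemma~\ref{lem:losers-decrease} (losers weakly decrease) and the laziness rule (Step 3(a) of Definition~\ref{def:lazy-tournament}) should translate precisely into the inequality $x > d_x$, because a label can only be dominated by strictly larger labels that it will later face, and in a trivalent tree there are at most $x-1$ such ``slots'' below $x$ in the relevant sense. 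Dually, the fact that column-restrictedness is preserved under the recursion for $\CPF$ in \cite{CGM} and under $\forget$ for $\Tour$ (Lemma~\ref{lem:map}, Proposition~\ref{prop:bijection}) gives an alternative route: one can instead verify that $\beta$ intertwines the two recursions, i.e.\ that $\beta$ commutes with $\forget$ on the tree side and the corresponding column-removal operation on the parking-function side, and then conclude by induction on $n$ from the base case $n=1$ (where $|\Tour(1)| = |\CPF(1)| = 1$). I would likely present the inductive-compatibility version as the cleaner argument, since it leverages machinery already built in Section~\ref{sec:proof}.

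\textbf{Main obstacle.} The hard part will be pinning down the exact rule for which row a winning label occupies, and proving that the resulting filling is simultaneously (i) a valid parking function (columns strictly increasing, path weakly above the diagonal) and (ii) column-restricted, while also being natural enough that the inverse is manifestly well-defined. The laziness rule is what makes the tournament match the \emph{column}-restricted parking functions rather than all parking functions, so the bookkeeping that shows ``laziness $\leftrightarrow$ dominance index bound'' is the technical heart of the argument; everything else is careful but routine verification that the two recursions coincide.
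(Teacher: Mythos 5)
Your overall strategy --- deduce Theorem~\ref{thm:CPFs} from Theorem~\ref{thm:tournaments} by exhibiting a bijection $\Tour(\mathbf{k})\to\CPF(\mathbf{k})$ --- is not how the paper handles this statement: the paper simply cites \cite{CGM}, where the theorem is proved by showing that $|\CPF(\mathbf{k})|$ satisfies the asymmetric string recursion of Proposition~\ref{prop:recursion} via the map $r$ (there called $\varphi$). What you propose is essentially the content of the paper's Section~\ref{sec:PFs}: the map you describe is exactly $\tau$ (car $m$ goes in the column of the winner of round $m$; note the rows are then forced by the increasing-up-the-column convention, so your discussion of a row-placement rule is moot, and ``earlier wins going higher'' as written actually contradicts that convention), and the ``inductive-compatibility'' route you favor is Lemma~\ref{lem:diagram-commutativity} plus Proposition~\ref{prop:bij-tour-cpf}. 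So the plan is viable in principle, but as written it has two genuine gaps.

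First, the claim that $\tau(T)$ is column-restricted is only asserted heuristically (``laziness $\leftrightarrow$ dominance index bound''). The actual argument needs the First Round Lemma~\ref{lem:biggest-loser} (the loser of round $1$ is the rightmost zero of $\mathbf{k}$, giving $d_1=0$, i.e.\ no empty column to the right of the car $1$) together with a statement like Lemma~\ref{lem:reverse-cpf} (if $d_1=0$ and $r(P)$ is column-restricted then so is $P$) and induction; none of this is supplied. Second, and more seriously, surjectivity onto $\CPF(\mathbf{k})$ is nowhere addressed. Your inductive square only closes if the parking-function-side map $\hat r:\CPF(\mathbf{k})\to\coprod_{j>i}\CPF(\widetilde{\mathbf{k}}_j)$ is a bijection --- but that is precisely the combinatorial core of the proof of Theorem~1.1 in \cite{CGM}, which you claim to ``bypass entirely.'' If you import it, the detour through tournaments is unnecessary (Proposition~\ref{prop:recursion} plus $\hat r$ already gives the theorem, which is the cited proof); if you refuse to import it, you must either prove $\hat r$ bijective yourself or construct $\tau^{-1}$ directly on all of $\CPF(\mathbf{k})$ and show it lands in $\Tour(\mathbf{k})$ (as in the paper's Section on reversing the bijection), and neither is sketched beyond a sentence. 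Until one of those is done, the argument does not establish $|\Tour(\mathbf{k})|=|\CPF(\mathbf{k})|$, only that $\tau$ maps one set into (a subset of) parking functions.
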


This theorem was proven by showing that the quantities $|\CPF(k_1,\ldots,k_n)|$ satisfy the recursion of Proposition \ref{prop:recursion}, using the following map.

\begin{definition}
We define $r : \PF(n) \to \PF(n-1)$ as follows: given a parking function $P$, $r(P)$ is defined by removing the row containing the number $1$, decrementing all remaining labels, and then deleting the rightmost empty column, as shown in Figure \ref{fig:r}. 
\end{definition}

\begin{figure}
    \centering
    \includegraphics{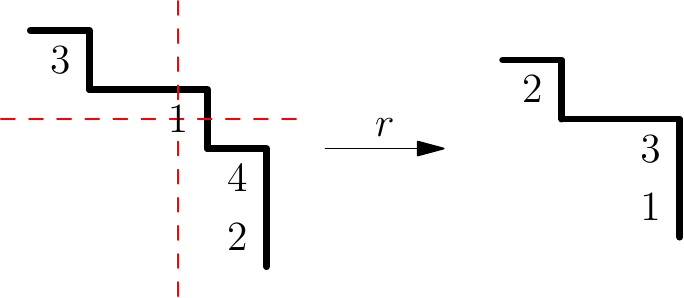} \hspace{2cm} \includegraphics{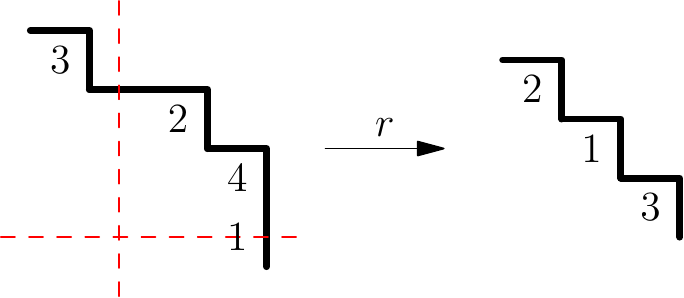}
    \caption{Applying the map $r$ to column-restricted parking functions.  At left is the case in which the $1$ is in its own column ($k_j=1$) and at right, the case in which $1$ shares a column with other labels ($k_j>1$).  Notice that the result is column-restricted in each case.}
    \label{fig:r}
\end{figure}

The following result shows that $r$ restricts to a bijection on column restricted parking functions.  (Note that in \cite{CGM}, $r$ was called $\varphi$.)

\begin{prop}[Proof of Theorem 5.3 in \cite{CGM}]
The map $r$ induces a bijection
\begin{equation}\label{eq:r-map}
\hat{r}:\CPF(\mathbf{k})\to \coprod_{j>i}\CPF(\widetilde{\bf k}_j).
\end{equation}  
\end{prop}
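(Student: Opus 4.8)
The plan is to run essentially the same argument as for the tournament bijection $\forget$ in Proposition~\ref{prop:bijection}, with the column $j$ containing the label $1$ playing the role of the first-round pair $(i,j)$: removing the row of the label $1$ is the parking-function analogue of playing the first round of the lazy tournament. Concretely, I would (1) identify, for $P\in\CPF(\mathbf{k})$, the column $j$ of the label $1$ and show $j>i$; (2) show that the column-height composition of $r(P)$ is exactly $\widetilde{\mathbf{k}}_j$ and that $r(P)$ is again column-restricted, so that $\hat r$ really lands in $\coprod_{j>i}\CPF(\widetilde{\mathbf{k}}_j)$; and (3) construct an explicit two-sided inverse, reinserting the label $1$ in the column dictated by the coproduct tag $j$.

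For step (1): the label $1$ sits at the bottom of some column $j$, so $k_j\ge 1$, and the only columns to the right of the label $1$ containing no larger entry are the empty ones, so the dominance index $d_1$ equals the number of empty columns strictly to the right of column $j$. Since $P$ is column-restricted, $1>d_1$, forcing $d_1=0$: there is no empty column to the right of column $j$. As the rightmost empty column of $P$ has index $i$ (the rightmost $0$ of $\mathbf{k}$, or $i=c$ if there is none), this gives $j>i$. For step (2): a short lattice-path argument shows that the row of the label $1$ meets no column other than $j$ --- the bottom cell of a nonempty column sits at a height that, because the underlying path only descends as one moves right, is not occupied by any other column --- so removing that row merely lowers $k_j$ by $1$; then deleting the rightmost empty column removes column $j$ itself when $k_j=1$ (it has just become empty and, as shown, nothing empty lies to its right) and removes column $i$ when $k_j>1$. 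In both cases the resulting composition is precisely $\widetilde{\mathbf{k}}_j$. For column-restrictedness, note that decrementing all labels preserves the domination relation cell-for-cell, so for a surviving label $x$ one has $d^{r(P)}_{x-1}=d^{P}_x$ or $d^{P}_x-1$ according as the deleted empty column lies to the left or to the right of $x$'s column; the only borderline case to rule out is $d^{P}_x=x-1$ with the deleted column to the left of $x$. But then the $x-1$ columns to the right of $x$ that it dominates all lie to the right of $i$, hence are nonempty, with entries in $\{1,\dots,x-1\}$, hence are singletons accounting for all of the labels $1,\dots,x-1$; in particular the label $1$ would lie in one of them, to the right of $x$'s column and in a height-$1$ column --- contradicting that $1$ lies in column $j$, which has height $\ge 2$ (if $k_j>1$) or lies to the left of $x$'s column (if $k_j=1$). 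So $r(P)\in\CPF(\widetilde{\mathbf{k}}_j)$.

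For step (3), the inverse is built exactly as in Proposition~\ref{prop:bijection}: given $P'$ in the $j$-th summand $\CPF(\widetilde{\mathbf{k}}_j)$, increment all labels (only those $\ge j$ when $k_j=1$), re-insert the column that $r$ had deleted --- an empty column at position $i$ if $k_j>1$, a fresh singleton column at position $j$ if $k_j=1$ --- and place a new cell labeled $1$ at the bottom of column $j$. One checks that the inserted right- and down-steps can be placed so that the path stays weakly above the diagonal, that the column heights come out to $\mathbf{k}$, and (by the same dominance bookkeeping as above, using that in $\widetilde{\mathbf{k}}_j$ every $0$ occurs at a position $\le i<j$) that the result is column-restricted; finally, applying $r$ recovers $P'$ together with its tag $j$, since the cell labeled $1$ is the global minimum and the reinserted column is the rightmost empty one. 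Verifying that this construction composed with $\hat r$ is also the identity is a symmetric bookkeeping check, and the two together give the bijection.

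The main obstacle is the stability of column-restrictedness under $r$ and under its inverse --- that is, controlling all of the dominance indices $d_x$, not just $d_1$, as one cell and one empty column are deleted and reinserted and all labels are shifted. This is the heart of the matter and is where the hypothesis that $P$ is column-restricted (via the pigeonhole argument above) is really used; by contrast, identifying $j$ and matching the composition of $r(P)$ with $\widetilde{\mathbf{k}}_j$ is essentially formal. (Alternatively, once the bijection $\CPF(\mathbf{k})\leftrightarrow\Tour(\mathbf{k})$ of this section is established and shown to intertwine $r$ with $\forget$, the statement follows from Proposition~\ref{prop:bijection}; but the direct argument is self-contained and parallels the proof in \cite{CGM}.)
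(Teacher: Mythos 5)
The paper does not actually prove this proposition: it is quoted from \cite{CGM} (the proof of Theorem 5.3 there), and the only related argument given here is Lemma \ref{lem:reverse-cpf}, which handles the reverse implication ($r(P)$ column-restricted and $d_1=0$ implies $P$ column-restricted). So your direct, self-contained argument cannot be matched against an in-paper proof; judged on its own it is essentially sound. Step (1) is right, the row-of-$1$ analysis in step (2) is right (each row of a parking function contains exactly one labeled cell, so deleting that row only lowers $k_j$), and the heart of the matter --- the dominance bookkeeping --- is handled correctly: when the deleted column lies to the right of $x$'s column the index drops by one automatically, and your pigeonhole argument ruling out $d^P_x = x-1$ when the deleted column lies to the left (the $x-1$ dominated columns would be nonempty singletons containing $1,\dots,x-1$, forcing the label $1$ into a height-one column to the right of $x$, contradicting either $k_j>1$ or $j<c_x$) is exactly what is needed. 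The inverse direction is in fact easier (no borderline case; it is Lemma \ref{lem:reverse-cpf}), and your observation that all zeros of $\widetilde{\mathbf{k}}_j$ sit at positions $\le i < j$ is the right ingredient for checking $d_1=0$ after reinsertion.

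Two concrete corrections. First, your description of the inverse contains a genuine error: ``increment all labels (only those $\ge j$ when $k_j=1$)''. The map $r$ decrements \emph{all} car labels in both cases, so its inverse must increment all car labels in both cases; as written, in the $k_j=1$ case you would leave the car $1$ of $P'$ unincremented and then insert a second cell labeled $1$, which is not a parking function. The case distinction between $k_j>1$ and $k_j=1$ lives entirely in \emph{which column position} is reinserted ($i$ versus $j$), i.e.\ in the column indices --- you have imported the car-label asymmetry from the tree picture (Proposition \ref{prop:bijection}), where the leaf labels play the role of column indices, not of cars. With that fixed, the rest of your inverse construction and the identity checks go through. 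Second, your parenthetical alternative --- deducing the statement from Proposition \ref{prop:bijection} once $\tau$ is shown to be a bijection intertwining $r$ and $\forget$ --- is circular in the paper's logic, since the proof of Proposition \ref{prop:bij-tour-cpf} establishes that $\tau$ is a bijection precisely by quoting that $\hat r$ is one; it could only be salvaged by an independent proof that $\tau$ is bijective.
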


We will need the following additional lemma about CPFs.

\begin{lemma}\label{lem:reverse-cpf}
  Suppose $P \in \PF(n)$ has first dominance index $d_1=0$ and $r(P)$ is column-restricted.  Then $P$ is column-restricted as well.
\end{lemma}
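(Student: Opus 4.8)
The goal is to show that the conditions ``$d_1 = 0$'' and ``$r(P)$ is column-restricted'' together force $P$ to be column-restricted. Recall that column-restrictedness of $P$ means $x > d_x$ for every label $x \in \{1, \ldots, n\}$. The strategy is to compare the dominance indices $d_x$ of labels in $P$ with the dominance indices $d'_{x'}$ of the corresponding labels $x'$ in $r(P)$, where $x' = x - 1$ (after decrementing).

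\textbf{First I would handle the label $1$ directly.} The hypothesis $d_1 = 0$ says immediately that $1 > d_1 = 0$, so the column-restriction inequality holds for $x = 1$. This also tells us something structural: since $1$ dominates no columns to its right, every column strictly to the right of the cell containing $1$ must contain an entry greater than $1$ — in particular, every such column is nonempty. This will be the key fact used to track what happens to the rightmost empty column when we pass from $P$ to $r(P)$: the rightmost empty column of $P$ lies weakly to the left of (the column of) $1$, so deleting it does not disturb the columns to the right of $1$ in a way that changes dominance counts for those labels.

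\textbf{Next, for each label $x \geq 2$ in $P$, I would show $d_x = d'_{x-1}$, or at least $d_x \leq d'_{x-1}$.} Passing from $P$ to $r(P)$ removes the row of $1$, decrements labels, and deletes the rightmost empty column. Removing the row of $1$ and decrementing do not change which columns to the right of $x$ contain an entry exceeding $x$ (entries $> x$ in $P$ become entries $> x-1$ in $r(P)$, and the entry $1$ that was removed was never a witness since $1 < x$). The only subtlety is the deletion of the rightmost empty column of $P$: I need to argue this column is either to the left of $x$'s cell or, if it happens to be counted among the columns dominated by $x$, that its deletion is harmless. Here is where the $d_1 = 0$ hypothesis enters: the rightmost empty column of $P$ cannot lie to the right of $1$'s cell (as noted above, all columns right of $1$ are nonempty), so for any label $x$, if the rightmost empty column lies to the right of $x$ then it lies weakly left of $1$, i.e. in the ``block'' of columns that $x$ dominates only if $x$ also dominates up through $1$'s column — but since $1$'s column is nonempty and contains $1 \leq x$... one must check carefully whether $x$'s dominated columns include this empty column. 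In the case it does, deleting it decreases $d_x$ by exactly $1$ as we pass to $r(P)$, giving $d_x = d'_{x-1} + 1$; otherwise $d_x = d'_{x-1}$.

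\textbf{Then I would conclude.} In the case $d_x = d'_{x-1}$: since $r(P)$ is column-restricted, $(x-1) > d'_{x-1} = d_x$, hence $x > x - 1 > d_x$. In the case $d_x = d'_{x-1} + 1$: again $(x-1) > d'_{x-1}$, so $x > d'_{x-1} + 1 = d_x$. Either way $x > d_x$ for all $x \geq 2$, and combined with $1 > d_1$ this shows $P$ is column-restricted. \textbf{The main obstacle} I anticipate is pinning down precisely the bookkeeping around the rightmost empty column — specifically, verifying that in the "bad" case its deletion lowers $d_x$ by at most $1$ and never by more, and confirming via $d_1 = 0$ that this column's position interacts correctly with each $x$'s dominated block. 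Everything else (removing the row of $1$, decrementing, the entries-$>x$ witnesses) is routine and does not affect dominance counts.
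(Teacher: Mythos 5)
Your argument is correct and takes essentially the same route as the paper: both compare the dominance index $d_x$ in $P$ with $d'_{x-1}$ in $r(P)$, observing that passing to $r(P)$ preserves the domination status of every surviving column and deletes exactly one column, so $d_x \le d'_{x-1}+1 \le x-1$ for $x\ge 2$, while $d_1=0$ handles the label $1$. One harmless imprecision in your bookkeeping: when $1$ is alone in its column, the column deleted by $r$ is $1$'s own column (which becomes the rightmost empty column after removing the row of $1$), not the rightmost empty column of $P$ — the paper tracks this distinction explicitly, and it also short-circuits labels lying to the right of the rightmost empty column with a direct count (dominated columns there are nonempty with distinct maxima below $x$), whereas your uniform comparison through $r(P)$ covers those labels just as well.
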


\begin{proof}
  Let $i$ be the index of the rightmost empty column of $P$, and let $j$ be the column containing the $1$.  First suppose $x$ is a label in $P$ to the right of column $i$.  Then all columns to the right of $x$ are nonempty, and so $x$ dominates at most $x-1$ columns to its right (if these columns have largest entries $1,2,\ldots,x-1$).  Thus $d_x<x$ as required.
  
  Now suppose $x$ is a label to the left of column $i$ in $P$.  Then $x-1$ is the corresponding label in $r(P)$ and it dominates no more than $x-2$ columns to its right.  Then in $P$, $x$ dominates at most one more column to its right, namely either the empty column $i$ if $1$ is not in its own column, or column $j$ if $1$ is in its own column in column $j$.  Thus $d_x\le x-1$ as required. 
\end{proof}

\subsection{Bijection with tournaments}\label{sec:bijection}

We now construct an explicit bijection $\tau: \Tour(k_1,\ldots,k_n)\to \CPF(k_1,\ldots,k_n)$ that makes the following diagram of bijections commute, where $\hat{r}$ is the map \eqref{eq:r-map} defined above.
\begin{equation}\label{eq:diagram}
\begin{tikzcd}
\Tour(\mathbf{k}) \arrow[r,"\tau"] \arrow[d,"\forget"] & \CPF(\mathbf{k}) \arrow[d,"\hat{r}"] \\
\displaystyle\coprod_{j>i} \Tour(\widetilde{\bf k}_j) \arrow[r,"\coprod\tau"] & \displaystyle\coprod_{j>i} \CPF(\widetilde{\bf k}_j),
\end{tikzcd}
\end{equation}
Here  $\coprod\tau$ is induced by the maps $\tau : \Tour(\widetilde{\bf k}_j)\to \CPF(\widetilde{\bf k}_j)$.
This provides a direct link between the two combinatorial interpretations of the multidegrees. Recall our convention that the columns of a parking function are numbered $1,2,\dots, n$ from left to right.

\begin{definition}
  Given a trivalent tree $T\in \Tour(\mathbf{k})$, we define $\tau(T)$ to be the unique parking funtion of size $n$ such that for each $1\leq j,m\leq n$, the parking function $\tau(T)$ contains the number $m$ in column $j$ if and only if $j$ wins round $m$ of the tournament of $T$.
\end{definition}

\begin{example}
In the tournament $T$ in Example \ref{ex:tournament}, Round $1$ was won by the number $3$, so the parking function $\tau(T)$ has the number $1$ in column $3$.  Round $2$ was won by $4$, so the label $2$ appears in column $4$ in $\tau(T)$.  Round $3$ was won by $1$, so $3$ appears in column $1$, and Round $4$ was won by $4$, so $4$ appears in column $4$.  Thus $\tau(T)$ is the unique parking function whose sets of column labels, from left to right, are $\{3\},\{\},\{1\},\{2,4\}$, as shown in the upper right of Figure \ref{fig:commutative}.
\end{example}

\begin{figure}
    \centering
    \includegraphics{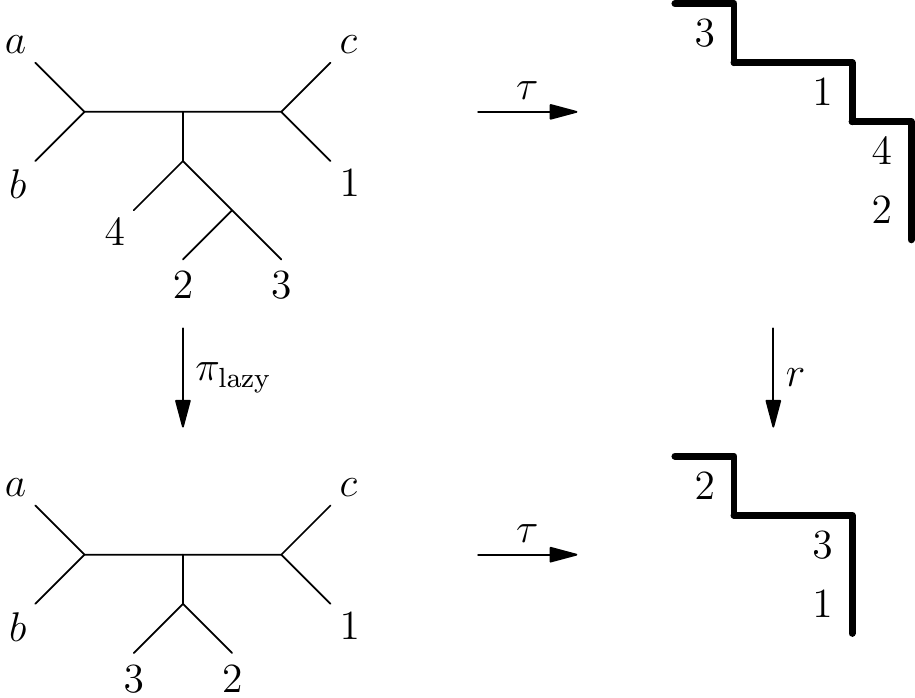}
    \caption{Tracing a tree $T\in \Tour(1,0,1,2)$ through the commutative diagram (\ref{eq:diagram2}).}
    \label{fig:commutative}
\end{figure}

As shown in \cite{CGM}, one can use the recursion of Proposition \ref{prop:recursion} to show that the multidegree $\deg_{(k_1,\ldots,k_n)}(\emb_n)$ is nonzero if and only if the sequence $(k_1,\ldots,k_n)$ is a reverse Catalan sequence, meaning that $k_{n}+k_{n-1}+k_{n-2}+\cdots+k_{n-i+1}>i$ for all $i$.  It is therefore an immediate consequence that applying $\tau$ to any tree $T\in \Tour(\mathbf{k})$ does indeed result in a parking function, with column heights $k_1,\ldots,k_n$.  

\begin{lemma}\label{lem:diagram-commutativity}
     For any $T\in \Tour(\mathbf{k})$, we have $r(\tau(T))=\tau(\forget(T))$.  That is, the following diagram commutes:
    \begin{equation}\label{eq:diagram2}
\begin{tikzcd}
\Tour(\mathbf{k}) \arrow[r,"\tau"] \arrow[d,"\forget"] & \PF(n) \arrow[d,"r"] \\
\displaystyle\coprod_{j>i} \Tour(\widetilde{\bf k}_j) \arrow[r,"\tau"] &  \PF(n-1).
\end{tikzcd}
\end{equation}
\end{lemma}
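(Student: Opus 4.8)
The plan is to verify directly that the two paths around the square (\ref{eq:diagram2}) produce the same parking function, by comparing how each operation affects the outcomes of the individual tournament rounds. The key observation is that $\forget$ runs exactly one round of the tournament of $T$ — the first round, in which the pair $(i,j)$ faces off — and then deletes two leaf edges and renumbers. Correspondingly, the map $r$ deletes the row containing $1$, decrements all labels, and deletes the rightmost empty column. So I would first record, using Lemma~\ref{lem:biggest-loser}, that in the tournament of $T$ the first round is won by $j$ (so column $j$ of $\tau(T)$ contains the entry $1$), and $i$ is the largest index with $k_i=0$ (so column $i$ of $\tau(T)$ is empty, and in fact it is the rightmost empty column by Lemma~\ref{lem:losers-decrease} together with the Winners and Participation Lemmas, exactly as in the proof of Lemma~\ref{lem:biggest-loser}(1)). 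This pins down the row of $1$ and the rightmost empty column of $\tau(T)$ in terms of the tournament data.

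Next I would compare the remaining rounds. After the first round, the tournament of $T$ on the edges not yet labeled is \emph{literally the same process} as the tournament of $\forget(T)$, up to the relabeling built into $\forget$: in the non-lazy case ($k_j>1$) the label $j$ advances and is henceforth renamed $j-1$ (together with the shift of $i+1,\dots,n$ down by one), while in the lazy case ($k_j=1$) the label $i$ advances and $j$ is deleted (together with the shift of $j+1,\dots,n$ down by one). So for $m\geq 2$, round $m$ of $T$'s tournament is won by the same leaf as round $m-1$ of $\forget(T)$'s tournament, after applying the column relabeling. Translating into parking functions: $\tau(\forget(T))$ has the entry $m-1$ in column $c'$ exactly when $\tau(T)$ has the entry $m$ in the column that maps to $c'$ under the relabeling. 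This is precisely the effect of $r$: deleting the row of $1$ and decrementing turns entry $m$ into entry $m-1$, and the column deletion/relabeling matches the renumbering of leaf labels. I would check the two cases of the column relabeling separately, matching them against the two cases ($k_j=1$ versus $k_j>1$) in the definitions of $r$ and of $\forget$; in the $k_j=1$ case the column $j$ (which held the $1$) becomes the new empty column that $r$ deletes, while in the $k_j>1$ case column $i$ remains the rightmost empty column and is what $r$ deletes. The base case $n=1$ is immediate since both sides are the unique empty parking function.

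The main obstacle I anticipate is bookkeeping the column relabeling carefully enough to see that ``delete the rightmost empty column and decrement columns past it'' on the parking function side agrees with ``delete leaf edges $i,j$ and decrement the leaf labels past the appropriate index'' on the tree side — in particular confirming that the column $r$ deletes really is the one corresponding to the forgotten/renamed leaf, and that no spurious empty column interferes. This is exactly where Lemma~\ref{lem:biggest-loser} and the monotonicity of losers (Lemma~\ref{lem:losers-decrease}) do the work: they guarantee column $i$ is the rightmost empty column of $\tau(T)$ before the operation, and that after running round one the rightmost empty column is either column $i$ (non-lazy case) or the now-empty column $j$ (lazy case), matching the two cases of $r$. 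Once this correspondence of indices is nailed down, the equality $r(\tau(T))=\tau(\forget(T))$ follows entry by entry, and hence the diagram commutes.
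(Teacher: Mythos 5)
Your proposal is correct and follows essentially the same route as the paper's proof: identify via Lemma~\ref{lem:biggest-loser} that the $1$ sits in column $j$ and that column $i$ is the rightmost empty column, observe that rounds $2,\dots,n$ of $T$'s tournament become rounds $1,\dots,n-1$ of $\forget(T)$'s tournament under the order-preserving relabeling, and match the two cases $k_j=1$ and $k_j>1$ with the column deleted by $r$. The only cosmetic difference is that the rightmost-empty-column claim follows immediately from the fact that $\tau(T)$ has column heights $k_1,\dots,k_n$, without needing to re-run the argument of Lemma~\ref{lem:biggest-loser}(1).
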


\begin{proof}

  Let $T\in \Tour(\mathbf{k})$, and let $i<j$ be the first pair that face off against each other in the tournament of $T$. Then $j$ is the column of the number $1$ in $\tau(T)$, and $i$ is the largest index for which $k_i=0$ by Lemma \ref{lem:biggest-loser}.  In particular, the $1$ is to the right of the rightmost empty column of $\tau(T)$.   The parking function $r(\tau(T))$ is the result of deleting the row containing the $1$ in $\tau(T)$, decrementing all remaining labels, and then deleting the rightmost empty column of the resulting diagram, which is column $i$ if $k_j>1$ and column $j$ otherwise.  This has the effect of decreasing the column indices of any label to the right of column $i$ or $j$ respectively by $1$.

  On the other hand, the tree $\forget(T)$ is formed by running the first round of the tournament, deleting the used leaf edges $i,j$, and decrementing all the labels above $j$ or $i$ respectively according to whether $k_j=1$ or $k_j>1$.  Under $\tau$, this corresponds to shifting all columns to the right of $j$ or $i$ respectively to the left one step, and also removing the $1$ and decrementing the remaining labels since the second round of the original tournament is now the first round of $\forget(T)$.  Thus $r(\tau(T))=\tau(\forget(T))$ as desired.
\end{proof}

We now need to show that the parking functions obtained from $\Tour(\mathbf{k})$ by applying $\tau$ are precisely the column-restricted parking functions.

\begin{prop} \label{prop:bij-tour-cpf}
The map $\tau$ is a bijection from $\Tour(\mathbf{k})\to \CPF(\mathbf{k})$ for any weak composition $\mathbf{k}=(k_1,\ldots,k_n)$ of $n$, and the diagram \eqref{eq:diagram} commutes.
\end{prop}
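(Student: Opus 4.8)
The plan is to prove the statement by induction on $n$, using the commutative square of Lemma~\ref{lem:diagram-commutativity} together with the fact that the two vertical maps $\forget$ (Proposition~\ref{prop:bijection}) and $\hat{r}$ (the map \eqref{eq:r-map}) are already known to be bijections. For the base case $n\le 1$, all the sets $\Tour(\mathbf{k})$ and $\CPF(\mathbf{k})$ are singletons, so $\tau$ is trivially a bijection and \eqref{eq:diagram} commutes.

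For the inductive step, fix a weak composition $\mathbf{k}$ of $n$ with rightmost zero at index $i$, and assume that $\tau$ is a bijection $\Tour(\widetilde{\mathbf{k}}_j)\to\CPF(\widetilde{\mathbf{k}}_j)$ for every $j>i$; this is legitimate since each $\widetilde{\mathbf{k}}_j$ is a weak composition of $n-1$. The first thing to check is that $\tau$ actually maps into $\CPF(\mathbf{k})$. From the reverse-Catalan criterion recalled just before the statement, we already know $\tau(T)\in\PF(n)$ with column heights $\mathbf{k}$, so it remains to see that $\tau(T)$ is column-restricted. By the First Round Lemma (Lemma~\ref{lem:biggest-loser}), the entry $1$ of $\tau(T)$ sits in the column $j>i$ that wins round $1$, and every column of index greater than $i$ is nonempty (its height is positive by maximality of $i$, which also covers the degenerate case $\mathbf{k}=(1,\dots,1)$, where $i=c$ and no column is empty at all); hence $1$ dominates no column to its right, i.e.\ the first dominance index of $\tau(T)$ is $d_1=0$. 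On the other hand, Lemma~\ref{lem:diagram-commutativity} gives $r(\tau(T))=\tau(\forget(T))$, and $\forget(T)\in\Tour(\widetilde{\mathbf{k}}_j)$ by Lemma~\ref{lem:map}, so by the inductive hypothesis $r(\tau(T))\in\CPF(\widetilde{\mathbf{k}}_j)$ is column-restricted. Applying Lemma~\ref{lem:reverse-cpf} with $P=\tau(T)$ then shows $\tau(T)$ is itself column-restricted, so $\tau$ restricts to a map $\Tour(\mathbf{k})\to\CPF(\mathbf{k})$.

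Given this, Lemma~\ref{lem:diagram-commutativity} upgrades to commutativity of the square \eqref{eq:diagram}: on the image of $\tau$ the map $r$ coincides with $\hat{r}$, and $\forget$ carries $\Tour(\mathbf{k})$ into precisely the summands $\Tour(\widetilde{\mathbf{k}}_j)$ on which $\coprod\tau$ restricts to the individual maps $\tau$, so $\hat{r}\circ\tau=(\coprod\tau)\circ\forget$. In that square $\forget$ and $\hat{r}$ are bijections, and $\coprod\tau$ is a bijection by the inductive hypothesis, so commutativity yields $\tau=\hat{r}^{-1}\circ(\coprod\tau)\circ\forget$, a composite of bijections; hence $\tau\colon\Tour(\mathbf{k})\to\CPF(\mathbf{k})$ is a bijection and \eqref{eq:diagram} commutes. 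This completes the induction.

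The main obstacle I anticipate is the verification that $\tau(T)\in\CPF(\mathbf{k})$ --- concretely, establishing $d_1=0$ uniformly and then correctly invoking Lemma~\ref{lem:reverse-cpf}. Once $\tau$ is known to land in the column-restricted parking functions, the remainder is a formal diagram chase through \eqref{eq:diagram}.
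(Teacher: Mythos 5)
Your proposal is correct and takes essentially the same route as the paper's proof: induction on $n$, showing $d_1=0$ via the First Round Lemma (Lemma~\ref{lem:biggest-loser}), deducing column-restrictedness of $\tau(T)$ from Lemma~\ref{lem:reverse-cpf} together with the commuting square of Lemma~\ref{lem:diagram-commutativity}, and then obtaining bijectivity of $\tau$ from the known bijectivity of $\forget$ and $\hat{r}$ plus the inductive hypothesis. The only (harmless) difference is that you spell out the degenerate case $\mathbf{k}=(1,\dots,1)$ with $i=c$, which the paper leaves implicit.
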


\begin{proof}
We first show that $\tau(T)$ is in $\CPF(\mathbf{k})$ by induction on $n$. In the base case $n=1$, this is easily checked, so assume the claim holds for compositions of $n-1$. Letting $i<j$ be the pair that faces off first in $T$, then $r(\tau(T)) = \tau(\forget(T))\in \CPF(\widetilde{\bf k}_j)$ by our inductive hypothesis. By the definition of $\tau$, the label $1$ is in column $j$ of $\tau(T)$. By Lemma~\ref{lem:biggest-loser}, $i$ is the largest index such that $k_i=0$, and since $i<j$, there are no empty columns to the right of the label $1$. Therefore, we have $1>d_1=0$ as in Definition~\ref{def:column-restricted} of column restrictedness. Since $r(\tau(T))$is also column restricted, we have that $\tau(T)$ is column restricted by Lemma~\ref{lem:reverse-cpf}. Hence, $\tau(T)\in \CPF(\mathbf{k})$ and the induction is complete.
  
  Because (\ref{eq:diagram2}) commutes by Lemma \ref{lem:diagram-commutativity} and $\tau(T)\in \CPF(\mathbf{k})$ for any $T\in \Tour(\mathbf{k})$, it follows that $(\ref{eq:diagram})$ commutes as well.  Finally, we show that $\tau : \Tour(\mathbf{k})\to \CPF(\mathbf{k})$ is a bijection by induction on $n$. The claim is easily checked for $n=1$, so assume the claim holds for all compositions of size $n-1$. The map $\forget$ is a bijection by Proposition~\ref{prop:bijection}, $\tau$ is a bijection by our inductive hypothesis, and $r$ is a bijection by~\cite{CGM}. Therefore, $\tau$ is also a bijection by commutativity of the diagram~\eqref{eq:diagram}.
\end{proof}

\subsection{Reversing the bijection}

We briefly describe, via the example below, how to compute $\tau^{-1}$ by hand, that is, how to recover the tree from its parking function (in a less cumbersome way than the insertion/relabeling argument of Proposition \ref{prop:bij-tour-cpf}).

\begin{example}
 Let $P$ be the parking function
 \begin{center}
 \includegraphics{PF-3--1-42.pdf}
 \end{center} from our previous examples.  The entries $1$ through $4$ in the diagram above are often called the \textbf{cars} of the parking function, and we refer to them as such below.  We compute $\tau^{-1}(P)$ as follows.
 
 \begin{enumerate}
     \item \textbf{List the winners and losers of the tournament.}  The indices of the columns of $P$ that contain cars are the winners, and the empty columns plus $c$ are the losers.  (In this example, $c$ and $2$ are the losers and $1,3,4$ are the winners.)  
     \item \textbf{Seed a rooted forest.}  For each $\ell\in \{c,1,2,\ldots,n\}$, draw a directed edge $\ell \rightarrow v_\ell$ labeled by $\ell$.  We consider each $v_\ell$ the root of its oriented tree.  
     \begin{center}
         \includegraphics[scale=0.8]{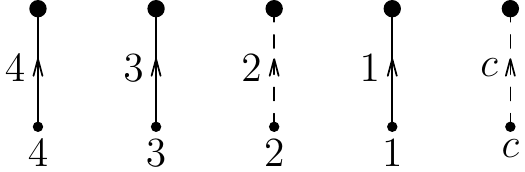}
     \end{center}
     For this example, we draw the loser edges with dashed lines and the winners with solid.  
     \item For each car $m$ of the parking function, starting from car $1$:
     \begin{itemize}
         \item \textbf{Merge two trees.}  Let $i$ be the largest loser that is adjacent to the root $v_i$ of its tree at this step.  If the car $m$ is in column $j$, let $v_j$ be the root of the tree containing $j$ (which in fact will be adjacent to an edge labeled $j$).   Identify vertex $v_i$ with $v_j$; $i$ faces off (and loses) against $j$.
         
         \item \textbf{Extend the tree.} Draw another directed edge $e$ from vertex $v_i=v_j$ to a new root $v_e$.  If car $m$ is at the top of column $j$ and it is not the final car $n$, then the laziness rule applies and we label $e$ by $i$; otherwise we label it by $j$. 
     \end{itemize}
         The first merge and extend steps for our example are shown below.
    \begin{center}
     \includegraphics[scale=0.8]{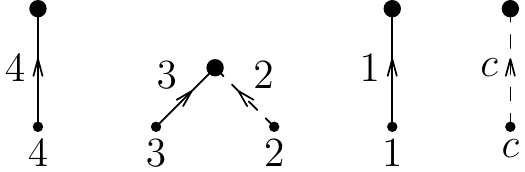} \hspace{4cm}
     \includegraphics[scale=0.8]{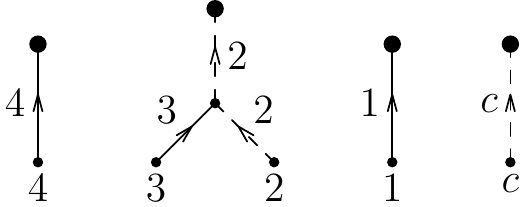}

    \end{center}
     We now repeat the merge and extend steps for cars $2,3,\ldots$.  The running example is shown below.
     \begin{center}
         \includegraphics[scale=0.8]{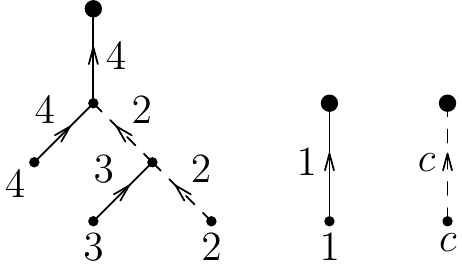} \hspace{2cm} \includegraphics[scale=0.8]{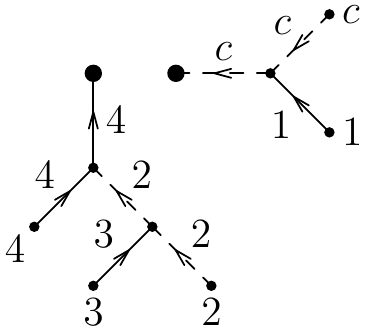}
         \hspace{2cm} \includegraphics[scale=0.8]{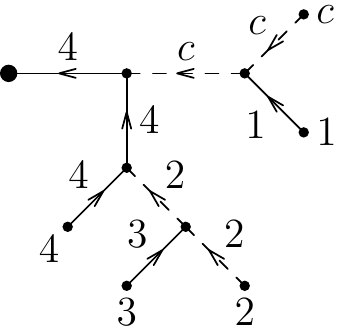}
     \end{center}

     \item \textbf{Add $a$ and $b$.}  When we have a single connected tree rooted at $v$, attach leaf edges $a$ and $b$ to $v$.
     \begin{center}
         \includegraphics{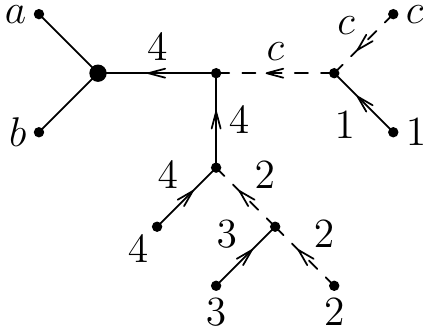}
     \end{center}
\end{enumerate}
Finally, removing the orientation and labels on each edge, we recover the tree $\tau^{-1}(P)$.
\end{example}
 
 \begin{remark}
 The tree can also be built from the parking function by starting with the vertex connected to $a,b$ and branching out, while reading the cars of the parking function from greatest to least (rather than least to greatest). In this case, the tournament is run in reverse, pairing the columns that the cars appear in with losers starting with the \textit{smallest} available loser. As in the method above, the key fact is that the laziness rule applies precisely when the corresponding car of the parking function is at the top of its column.
 \end{remark}

\section{Hyperplanes containing the tournament points}\label{sec:hyperplanes}

A natural question in light of Theorem \ref{thm:tournaments} is whether the set $\Tour(\mathbf{k})$ can be obtained as a complete intersection of $\Mbar_{0, X}$ with an appropriate set of hyperplanes in the iterated Kapranov embedding. As shown in Example \ref{ex:intro}, this is not possible in general, because the linear span of $\Tour(\mathbf{k})$ (in a given factor of the embedding) may intersect the image of $\Mbar_{0, X}$ in a subset of dimension $> \dim(\Mbar_{0,X}) - k_i$.

It is often possible, however, to express $\Tour(\mathbf{k})$ as the \emph{limit} of such an intersection, using a varying family of hyperplanes. We explore this question further in forthcoming work. Although we do not know if such a limit exists in general, a necessary condition is that there is a set of (fixed) linearly independent hyperplanes, $k_i$ of which lie in $\PP^i$ for each $i$, which contain every point of $\Tour(\mathbf{k})$.  We end by showing that such hyperplanes do, indeed, exist.  We restate Theorem \ref{thm:simple-hyperplanes} here for convenience.

\begin{HyperplanesThm}
Let $[z_b:z_c:z_1:z_2:\cdots:z_{r-1}]$ be the projective coordinates of the $\PP^r$ coordinate in $\PP^1\times \cdots \times \PP^n$.  Then the coordinates of the points of $\Tour(k_1,\ldots,k_n)$ in the $\PP^r$ factor all lie on the $k_r$ hyperplanes
$$z_b=0,\,\, z_c=0,\,\, z_1=0,\,\, \ldots,\,\, z_{k_r-2}=0,$$ where if $k_r=1$ then our collection only contains the hyperplane $z_b=0$, and if $k_r=2$ then we only have the two hyperplanes $z_b=0$ and $z_c=0$.  (If $k_r=0$ it is the empty collection.)
\end{HyperplanesThm}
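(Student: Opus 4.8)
\section*{Proof proposal}

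The plan is to convert the statement into a claim about the tree $f_r(T)$ and then read off the needed information from the lazy tournament of $T$.

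By Corollary~\ref{cor:full-coordinates}, the coordinate $z_\ell$ of $\emb_n(T)$ in the $\PP^r$ factor vanishes precisely when leaf $\ell$ lies on the same branch as $a$ at the vertex $v_r$ of $T':=f_r(T)$ (the tree obtained from $T$ by forgetting $r+1,\dots,n$). Listing the non-$a$ labels in increasing order as $\ell_1=b<\ell_2=c<\ell_3=1<\ell_4=2<\cdots$, the theorem says exactly that $z_{\ell_1}=\cdots=z_{\ell_{k_r}}=0$, i.e.\ that $\ell_1,\dots,\ell_{k_r}$ all lie on the $a$-side of $v_r$ in $T'$. There is nothing to prove if $k_r=0$, and if $k_r=1$ the claim is immediate, since $a,b$ are adjacent in $T$ (hence in $T'$) and so $b=\ell_1$ is on the $a$-side of any vertex not separating $a$ from $b$. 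So assume $k_r\ge 1$. In $T$, let $v_0$ be the internal vertex adjacent to leaf $r$, let $v_0,v_1,\dots,v_m$ be the vertices on the path from $v_0$ to the vertex $v_m$ carrying $a$ and $b$, and write $e_0$ for the leaf edge of $r$ and $e_t=v_{t-1}v_t$ for $1\le t\le m$. Since $r\ne a,b$ must compete in the tournament (Lemma~\ref{lem:numbers-used}), $m\ge 1$; let $S_t$ be the side branch of $v_t$ for $0\le t\le m-1$. As $a,b$ sit at $v_m$, no $S_t$ contains $a$ or $b$, so every leaf of every $S_t$ is $\ge c$.

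Since $k_r\ge 1$, leaf $r$ wins its first match (else it loses all of them by Lemma~\ref{lem:losers-lose}), hence wins every match it plays by Lemma~\ref{lem:winners-win}. By Lemma~\ref{lem:numbers-used}, labels advance only towards $a$, so $r$'s successive matches occur at $v_0,v_1,\dots$: at $v_t$ it defeats the label $w_t$ that has emerged onto the side edge of $S_t$, thereby labelling $e_{t+1}$, and it keeps advancing until it reaches $v_m$ (so $k_r=m$) or the laziness rule drops it at its $k_r$-th match. In particular $w_0<r$. I would establish two facts: \textbf{(A)} $w_0=\min(S_0)$; and \textbf{(B)} $w_0>w_1>\cdots>w_{k_r-1}$. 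For \textbf{(A)}, follow the leaf carrying $c_0:=\min(S_0)$ along its path out of $S_0$ to the side edge $g_0$ at $v_0$: at each vertex it meets the winner of a side branch lying entirely inside $S_0$, a label $>c_0$, and loses; but the edge it would advance onto is adjacent either to another, already-resolved side branch of $S_0$ (winner $>c_0$) or, at the last step, to $e_0$, which carries $r>c_0$ from the start; so the laziness rule fires each time and $c_0$ advances onto $g_0$, giving $w_0=c_0$. (Here ``already-resolved'' uses Lemma~\ref{lem:losers-decrease}: every round internal to a subtree has as its loser a leaf of that subtree, so a subtree whose leaves all exceed $c_0$ is finished before the round in which $c_0$ first loses.) For \textbf{(B)}, fix $1\le t\le k_r-1$, so $r$ advances past its $t$-th match (at $v_{t-1}$, defeating $w_{t-1}$) without laziness; that match labels $e_t$, whose only neighbours besides $e_{t-1}$ (carrying $r$) and $g_{t-1}$ (carrying $w_{t-1}$) are the edge to $a$, not yet labeled by Lemma~\ref{lem:numbers-used}, and $g_t$. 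If $g_t$ is already labeled by $w_t$, the absence of laziness forces $w_t\le w_{t-1}$, hence $w_t<w_{t-1}$. If $g_t$ is not yet labeled, then $S_t$ contains a round whose loser is $<w_{t-1}$, so $\min(S_t)<w_{t-1}$; but $e_t$ is now labeled $r$, so exactly as in (A) the label $\min(S_t)$ advances out of $S_t$, giving $w_t=\min(S_t)<w_{t-1}$. Either way $w_t<w_{t-1}$.

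From (A) and (B), $w_0>w_1>\cdots>w_{k_r-1}$ is a strictly decreasing sequence of $k_r$ distinct labels each $\ge c=\ell_2$, so its largest term satisfies $w_0\ge\ell_{k_r+1}>\ell_{k_r}$; thus $\min(S_0)=w_0>\ell_{k_r}$, and $S_0$ contains none of $\ell_1,\dots,\ell_{k_r}$ (all of which are $\le\ell_{k_r}<r$). Since $w_0\in S_0$ and $w_0<r$, the branch $S_0$ keeps a leaf after forgetting $r+1,\dots,n$, so $v_0$ survives into $T'$ and equals $v_r$, and the branch at $v_r$ not containing $a$ is $S_0$ with its leaves $>r$ deleted; it too avoids $\ell_1,\dots,\ell_{k_r}$. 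Hence $\ell_1,\dots,\ell_{k_r}$ are leaves of $T'$ distinct from $r$ lying on the $a$-side of $v_r$, i.e.\ $z_{\ell_1}=\cdots=z_{\ell_{k_r}}=0$, which is the assertion of the theorem. The one place that requires real care is the timing in (A) and (B): one must check, using the weak decrease of the losing labels (Lemma~\ref{lem:losers-decrease}) and the fact that labels advance only towards $a$ (Lemma~\ref{lem:numbers-used}), that each relevant side branch or path edge is labeled exactly when needed for the laziness rule to fire or to fail. Granting those timing facts, (A), (B), and the conclusion are all short.
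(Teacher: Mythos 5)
Your proposal is correct, and its engine is the same as the paper's---namely that $r$'s successive opponents form a strictly decreasing chain of labels $\ge c$, anchored at the minimum of the branch hanging off $r$'s vertex---but you organize the argument along a genuinely different route. The paper fixes a coordinate $z_\ell$ with $\ell\le k_r-2$, assumes $z_\ell=1$, pulls the separation statement back from $T'$ to $T$ via Lemma~\ref{lem:separation}, and then uses Lemma~\ref{lem:branch-path} plus the laziness rule to bound the number of wins of $r$ by $m+1\le k_r-1$, a contradiction; the case $z_c=0$ is handled by a separate argument about the path from $c$ to $a$. You instead argue directly and uniformly in $T$: with $S_0$ the branch at $r$'s vertex away from $a$, your facts (A) and (B) give $\min(S_0)=w_0>w_1>\cdots>w_{k_r-1}\ge c$, hence $\min(S_0)\ge\ell_{k_r+1}>\ell_{k_r}$, and you then push this forward through the forgetting maps, observing that $v_0$ survives into $T'$ (since $\min(S_0)<r$) and becomes $v_r$; this replaces Lemma~\ref{lem:separation} by a simpler forward-direction observation and subsumes the $z_b$ and $z_c$ cases in one count. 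What each approach buys: the paper's contradiction argument is shorter once its two technical lemmas are in place, while yours pins down exactly which labels reach $r$ and avoids working backwards from $T'$. Two small remarks: the timing facts you flag in (A) and (B) are precisely the content of the paper's Lemma~\ref{lem:branch-path} (combined with Lemmas~\ref{lem:losers-decrease} and~\ref{lem:numbers-used}), so you could cite that lemma rather than re-derive it; and your ``immediate'' $k_r=1$ case implicitly uses that $v_r$ is not the vertex carrying $a$ and $b$ (true because $T'$ has at least four leaves), which deserves an explicit sentence.
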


To prove this, we first require two technical lemmas.  We will say that two labels $i,j$ are in different branches (resp., the same branch) \defn{from the perspective of $r$} in a tree $T$ if they are on different branches (resp., the same branch) from the internal vertex $v_r$ adjacent to leaf edge $r$.  If they are on different branches, we also say that $r$ \defn{separates} $i$ from $j$ in $T$.

\begin{lemma}\label{lem:separation}
Let $T\in \Tour(\mathbf{k})$, let $r$ be a winner in $T$, and suppose that the label $r$ in $T'=\pi_{r+1}\circ \cdots \circ \pi_{n}(T)$ separates some label $\ell$ from $a$. Then $r$ separates $\ell$ from $a$ in $T$, as well.
\end{lemma}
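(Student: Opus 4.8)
The plan is to track what happens to the branches at $v_r$ under the forgetting maps $\pi_{r+1}, \pi_{r+2}, \ldots, \pi_n$. The key observation is that applying a forgetting map $\pi_m$ with $m > r$ can only delete a leaf (and contract a resulting valence-two vertex) — it never moves a leaf from one branch of $v_r$ to another, and it never creates a new separation at $v_r$ that was not already present. More precisely, let $v_r$ be the internal vertex adjacent to the leaf edge $r$ in $T$, and consider its three branches. After forgetting $m > r$, the vertex $v_r$ survives (since the leaf $r$ is still present and $r \neq m$), and each surviving leaf $\ell$ stays on the branch of $v_r$ that it started on, unless the branch collapses entirely — but the branch containing $a$ cannot collapse while $a$ survives, and the branch containing $r$'s ``other side'' issues are irrelevant here. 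So the partition of the surviving labels into the three branches at $v_r$ in $T'$ is exactly the restriction of the partition in $T$.

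First I would set this up carefully: fix $T \in \Tour(\mathbf{k})$, let $B_a$ be the branch at $v_r$ containing the leaf $a$ (note $a$ survives all the maps $\pi_{r+1}, \ldots, \pi_n$ since each forgets an index $> r \geq 1 > c > b > a$ in our ordering), and let $B_\ell$ be the branch at $v_r$ in $T$ containing $\ell$. The hypothesis is that in $T'$, the label $r$ separates $\ell$ from $a$, i.e. $\ell$ and $a$ lie on distinct branches at $v_r$ in $T'$. By the previous paragraph, the branch-partition at $v_r$ in $T'$ is the restriction of the branch-partition in $T$ to the surviving leaves; since $\ell$ and $a$ both survive (they survive because $\ell$ and $a$ are present in $T'$ by hypothesis — $\ell$ is separated from $a$ by $r$, so in particular $\ell$ is a leaf of $T'$) and they lie on distinct branches in $T'$, they must lie on distinct branches in $T$. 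Hence $r$ separates $\ell$ from $a$ in $T$.

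The one point requiring actual care — and what I expect to be the main obstacle — is verifying rigorously that forgetting maps do not ``merge'' or ``rearrange'' the branches at $v_r$, i.e. that $v_r$ genuinely persists with its three branches through each $\pi_m$ ($m > r$). The subtlety is that $\pi_m$ contracts valence-two vertices, and one must check that $v_r$ never becomes valence-two: this could only happen if two of its three branches got completely absorbed, but the branch containing $r$ contributes the leaf edge $r$ (which is never forgotten since $r$ is a winner in $T$, in particular a leaf of $T$, and we only forget indices $> r$), and the branch containing $a$ contributes the leaf $a$ (never forgotten). So $v_r$ retains at least two non-collapsing directions plus, after contraction, the direction toward the third branch if anything remains there; one checks that in all cases $v_r$ stays valence $\geq 3$ or, if the third branch empties out, $v_r$ may become a point where the $a$-branch and $r$-branch meet directly — but then $\ell$ could not be separated from $a$ by $r$ in $T'$ unless $\ell$ is on the $r$-side, which is fine. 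I would phrase this as: the dual tree operation for $\pi_m$ deletes leaf $m$ and its edge, then contracts degree-two vertices, and an easy induction shows that throughout this process the set of branches at $v_r$ (viewed as equivalence classes of surviving leaves) only loses elements, never gains or exchanges them. With that lemma in hand, the conclusion is immediate, so the writeup should be short once the branch-stability claim is nailed down.
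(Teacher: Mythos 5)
There is a genuine gap, and it sits exactly where you wave your hands: the claim that the forgetting maps ``never create a new separation at $v_r$'' is false for general trees, and your argument never actually uses the hypothesis that $r$ is a winner. The failure mode is the collapse case you dismiss as ``fine.'' Note first that $v_r$ becomes divalent as soon as \emph{one} (not two) of its three branches is entirely forgotten. Call the three branches at $v_r$ in $T$: the leaf edge $r$, the branch $B_a$ containing $a$, and the third branch $B$. If every leaf of $B$ lies in $\{r+1,\ldots,n\}$, then after forgetting, $v_r$ is contracted and the leaf $r$ reattaches at a vertex \emph{inside} $B_a$; in $T'$ the label $r$ then separates from $a$ various labels $\ell$ that lie in $B_a$ together with $a$ in $T$. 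These are genuinely new separations among surviving labels, so the hypothesis ``$r$ separates $\ell$ from $a$ in $T'$'' can hold while the conclusion fails. Your parenthetical ``unless $\ell$ is on the $r$-side'' does not rescue this: the $r$-side of $v_r$ in $T$ is just the leaf $r$ itself, so the problematic $\ell$ necessarily comes from $B_a$.

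A concrete counterexample to your branch-stability claim (with $r$ not a winner, so the lemma itself is untouched): take $n=2$ and the tree $T\in\Tour(0,2)$ in which $a,b$ share a vertex, $1,2$ share a vertex, and $c$ is attached to the middle vertex. Here $1$ does not separate $c$ from $a$ in $T$, but after forgetting $2$, it does in $T'=\pi_2(T)$. This shows the statement is false without the winner hypothesis, so any correct proof must invoke it. The paper's proof does so by viewing $T$ as obtained from $T'$ by re-inserting $r+1,\ldots,n$: the only way a new separation can arise is if some label is inserted on the leaf edge of $r$ in $T'$ (equivalently, the branch $B$ above has all leaves $>r$), and in that case the first opponent $r$ meets in the tournament of $T$ is a label larger than $r$, so $r$ loses its first round and, by the Losers Lemma, never wins---contradicting that $r$ is a winner. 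Your proof needs this tournament-theoretic step (or an equivalent use of the winner hypothesis) inserted at precisely the point where you assert the third branch cannot matter; without it the ``easy induction'' you propose cannot go through.
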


\begin{proof}
Note that $T$ is obtained from $T'$ by successively inserting the numbers $r+1,\ldots,n$ as leaf edges attached to existing edges starting from $T'$.  We claim that the property of $r$ separating $\ell$ from $a$ still holds in $T$.  Indeed, let $T'_r$, $T'_a$, and $T'_\ell$ be the three branches of the tree attached to $r$'s internal vertex in $T'$, which contain $r,a,\ell$ respectively.   If the labels $r+1,\ldots,n$ are all inserted at edges in either $T'_a$ or $T'_\ell$, it is clear that $r$ still separates $\ell$ from $a$.

If instead one of the labels $r+1,\dots, n$ is inserted on the unique edge in $T'_r$ (with possibly more inserted on the resulting edges), then $r$ would be paired in its first round of the tournament with some label among $r+1,\ldots,n$, and therefore $r$ loses its first round.  This is a contradiction to the Losers Lemma (\ref{lem:losers-lose}) since $r$ is a winner.  Thus $r$ separates $\ell$ from $a$ in $T$ as well.  
\end{proof}

\begin{lemma}\label{lem:branch-path}
Let $v$ be a vertex of a tree $T \in \Tour(\mathbf{k})$ and let $B$ be a branch at $v$ not containing $a$. Let $m$ be the smallest leaf label of $B$ and let $P$ be the path from $m$ to $v$. 

By the time $m$ first participates in a round of the tournament of $T$, every edge of $B$ is labeled except those along $P$.

Moreover, $m$ faces off against every labeled edge of $B$ attached to $P$ and advances until at least the vertex just before $v$ in $P$.
\end{lemma}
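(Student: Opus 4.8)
The plan is to run the tournament locally inside $B$, orienting every edge of $T$ toward $a$ as in the proof of the Participation Lemma (Lemma~\ref{lem:numbers-used}). Write $P$ as $m \xrightarrow{e_0} u_1 \xrightarrow{e_1} u_2 \to \cdots \to u_t \xrightarrow{e_t} v$; for $1 \le s \le t$ let $f_s$ be the third edge at $u_s$ and $B_s$ the subtree of $B$ hanging off $f_s$ (with $f_s \in B_s$), so the edge set of $B$ is the disjoint union of $P = \{e_0, \ldots, e_t\}$ and the $B_s$. If $t = 0$ (i.e.\ $B$ is the single leaf edge of $m$) everything is vacuous, so assume $t \ge 1$; then $m \ne a, b$ (since $a \notin B$, and $b \in B$ would force $v = v_a$, hence $t = 0$), so $m$ does compete. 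Two consequences of Lemma~\ref{lem:numbers-used} will be used repeatedly: each edge is labeled by the match at its \emph{tail} (the endpoint farther from $a$), so the labeled edges inside any $B_s$ always form a set that is downward-closed toward the leaves; and since labels only advance along their paths toward $a$, any label appearing on an edge of $B_s$ is a leaf label of $B_s$, hence strictly greater than $m$. In particular, since $m$ lies only on $e_0$ and $e_0$'s only internal endpoint is $u_1$, the first match involving $m$ is the match at $u_1$ against the label $j_1$ of $f_1$, which labels $e_1$; this can occur only once $f_1$ — hence, by the first consequence, all of $B_1$ — is labeled.

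\emph{First assertion.} Suppose some $B_s$ has an unlabeled edge at the moment $m$ first competes at $u_1$. The unlabeled edges of $B_s$ form a nonempty connected set containing $f_s$, so there is an unlabeled edge $g$ of $B_s$ farthest from $u_s$; its two children (the edges just below its lower endpoint) are then already labeled, and their labels are leaf labels of $B_s$, hence both exceed $m$. Thus those two children form an available pair with smaller element $> m$, and such a pair would be selected in place of $m$'s pair $(m, j_1)$ — contradicting that $m$'s match is the round actually played. Hence every $B_s$ is fully labeled before $m$ first competes; equivalently, every edge of $B$ off $P$ (in particular every $f_s$) is labeled, while $e_1, \ldots, e_t$ are not yet labeled.

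\emph{Second assertion.} We show by induction on $s = 1, \ldots, t$ that $m$'s $s$-th match occurs at $u_s$, is played against $j_s$ (the label of $f_s$), is lost by $m$, and — if $s < t$ — causes $m$ to advance to $e_s$ by the laziness rule. Throughout we maintain the invariant that, at the start of $m$'s $s$-th match, $(m, j_s)$ is the unique available pair whose smaller element equals $m$, every other available pair having smaller element less than $m$; this holds for $s = 1$ because $m$'s first match is selected (so $m$ is the largest available smaller-element) and $m$ then lies only on $e_0$. Given the invariant, $(m, j_s)$ is the next round played, at $u_s$, with third edge $e_s$, and its loser is $m$ (as $m < j_s$). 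If $s < t$, the far vertex of $e_s$ is $u_{s+1}$, where $f_{s+1}$ is labeled (by the first assertion) with $j_{s+1}$, and $j_{s+1} > m$ and $j_{s+1} \ne j_s$, so the laziness rule puts the label $m$ on $e_s$. Labeling $e_s$ with $m$ deletes the available pair $(m, j_s)$ and creates exactly one new available pair, $(m, j_{s+1})$ at $u_{s+1}$, with smaller element $m$, so the invariant passes to $s + 1$. Iterating to $s = t$ shows that $m$ faces off against $j_1, \ldots, j_t$, i.e.\ against every labeled edge of $B$ attached to $P$, and has advanced along $e_1, \ldots, e_{t-1}$, so it has reached the vertex $u_t$ just before $v$. (We claim nothing about $e_t$, since the laziness rule there depends on the edges at $v$ lying outside $B$.)

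The main obstacle is the no-preemption bookkeeping in the second assertion: one must check that, once $m$'s first match is played, each subsequent round is again $m$'s next match all the way up $P$, rather than being interrupted by a round elsewhere. This comes down to tracking how the pool of available pairs — and in particular the unique pair whose smaller element is exactly $m$ — evolves as $m$ advances, which the invariant packages cleanly. Everything else is the Participation Lemma together with a careful reading of the laziness rule, for which the first assertion supplies precisely the labeled side-edges $f_{s+1}$ that the rule takes as input.
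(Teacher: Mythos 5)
Your proof is correct and follows essentially the same route as the paper: minimality of $m$ in $B$ forces everything in $B$ off the path $P$ to be settled before $m$ competes, the orientation-toward-$a$ consequence of the Participation Lemma (Lemma \ref{lem:numbers-used}) keeps $P$ unlabeled until then, and the laziness rule then carries $m$ up $P$. The only differences are presentational: where the paper cites the weakly-decreasing-losers lemma (Lemma \ref{lem:losers-decrease}) you argue directly from the pair-selection rule, and your explicit available-pair invariant spells out the no-preemption step that the paper's final sentence leaves implicit.
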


\begin{proof}
If $B$ consists only of the leaf edge $m$, the result holds trivially.  So assume $B$ contains at least one leaf besides $m$. 

Since $m$ is minimal in $B$, it is paired in its first round with another element $p>m$ in $B$, so $m$ is a loser of the tournament.  Since the sequence of losers weakly decreases (Lemma \ref{lem:losers-decrease}) and $m$ is minimal in $B$, all other pairs in $B$ will face off before $m$'s first round.  

Now, suppose for contradiction that some edge on path $P$ from $m$ to $v$ becomes labeled before $m$'s first round.  This is only possible if the two other edges adjacent to $v$ (not in branch $B$) are labeled and then face off to label an edge in path $P$.  However, by the Participation Lemma (\ref{lem:numbers-used}), labels that advance in the tournament do so along their path towards $a$, and since $a$ is not in branch $B$, we have a contradiction.  Hence $P$ is unlabeled until $m$ starts competing, at which point it advances by the laziness principle against all of its opponents in branch $B$ except possibly the last.
\end{proof}

\begin{proof}[Proof of Theorem \ref{thm:simple-hyperplanes}]
  First note that since the leaf edges of $a$ and $b$ share a vertex in all tournament points, $b$ is on $a$'s branch from the perspective of any other vertex of the tree, so $z_b=0$ always holds.  In particular, we only have to consider the case $k_r\ge 2$.
  
  Since $k_r\ge 2$, for any $T\in \Tour(k_1,\ldots,k_n)$, the number $r$ wins at least two rounds of the tournament of $T$ by the definition of $\Tour(\mathbf{k})$.  Let $P$ be the path from $c$ to $a$, let $v_a$ be the internal vertex at leaf edges $a,b$, and let $B$ be the branch from $v_a$ not containing $a,b$.  Then $c$ is the minimal label in $B$, so by Lemma \ref{lem:branch-path}, a leaf edge attached to $P$ only faces off against $c$ (and then $c$ advances by the laziness rule).  Thus $r$ itself is not directly attached to a vertex on path $P$.   Moreover, since $r$ can only face off once against $c$ if it advances to path $P$, $r$ wins against at least one other number $i<r$ in its branch off of $P$. In particular, in the tree $T' = \pi_{r+1} \circ \cdots \circ \pi_n(T)$, the leaf edge $r$ is still not attached to path $P$.  Thus, in $T'$, leaves $a$ and $c$ are on the same branch from the perspective of $r$, so $z_c=0$ by Corollary \ref{cor:full-coordinates}.
  
  We now show that if $k_r>2$, the coordinates of the point $T$ satisfy the additional equations $z_1=0,z_2=0,\ldots,z_{k_r-2}=0$.  Assume for contradiction that $z_\ell = 1$ for some $\ell\le k_r-2$.  By Corollary \ref{cor:full-coordinates}, this means that in $T'$, the label $r$ separates $\ell$ from $a$. By Lemma \ref{lem:separation}, $r$ separates $\ell$ from $a$ in $T$ as well. Now, let $v_r$ be the internal vertex adjacent to $r$, let $T_\ell$ be the branch from $v_r$ containing $\ell$, and let $m$ be the smallest label $T_\ell$. By Lemma \ref{lem:branch-path}, since $T_\ell$ does not contain $a$, we have that $m$ labels all edges in its path to $v_r$ except possibly the last edge (connecting to $v_r$). However, note that 
  \[m\le\ell\le k_r-2\le r-2 < r,\]
  so $m$ also advances to the final edge adjacent to $v_r$ by the laziness principle.
  
  It follows that $r$'s first round of the tournament is against some number $m\le k_r-2$.  By the Winners and Losers Lemmas (\ref{lem:winners-win} and \ref{lem:losers-lose}), $r$ wins every round in which it competes. By Lemma \ref{lem:losers-decrease}, the losers (across the entire tournament) form a weakly decreasing sequence. Furthermore, $r$ itself will never face the same opponent twice, and so the sequence of losers that $r$ faces form a \textit{strictly} decreasing sequence starting at $m$. Thus by the Participation Lemma (\ref{lem:numbers-used}), the maximum possible number of opponents $r$ has is $m+1$ (since $c,1,2,\ldots,m$ may be its opponents, but not $a$ or $b$).  But $m+1\le k_r-1$, and so $r$ wins at most $k_r-1$ times, contradicting the fact that $T \in \Tour(\mathbf{k})$.
  
  Hence $z_\ell=0$ as desired.
\end{proof}

\bibliography{myrefs}
\bibliographystyle{plain}

\end{document}